\begin{document}

\title[Stokes matrices for confluent hypergeometric equations]{Stokes matrices for confluent hypergeometric equations}

\author[M. Hien]{Marco Hien}
\address[M.~Hien]{Institut f\"ur Mathematik, Universit\"at Augsburg, 86135 Augsburg, Germany}
\email{marco.hien@math.uni-augsburg.de}

\begin{abstract}
We apply the method of \cite{DHMS} to compute the Stokes matrices of non-resonant confluent hypergeometric differential equations. We discuss the ambiguity of the presentation of the Stokes matrices regarding different choices. The results rely on an explicit description of the perverse sheaf associated to the non-confluent regular singular hypergeometric system arising via Fourier-Laplace transform. We give assumptions on the parameter such that the Stokes matrices have rational or real values. Under some more restrictive conditions, the Stokes matrices had been computed by Duval-Mitschi before. We compare our results with their formulae in the unramified case. 
\end{abstract}

\maketitle

\tableofcontents

\section{Introduction}

Hypergeometric differential equations have been widely studied as explicit examples of meromorphic differential equations with possibly irregular singularities (in the \emph{confluent} case) providing analogies to wild ramification phenomena in the theory of $\ell $-adic sheaves. In the notation of Katz, the \emph{hypergeometric equation} for the parameters $\ualpha \in \C^n $, $\ubeta \in \C^m $ and $\glambda \in \C^\times $ is defined as
\begin{equation}\label{eq:defHyp}
\opHypabl:=\glambda \cdot \prod_{i=1}^n (z\del_z-\alpha_i) - z \cdot \prod_{j=1}^m (z\del_z -\beta_j) \ ,
\end{equation}
where $z $ is the complex coordinate. Nicholas Katz presented a vast investigation of these equations from the point of view of D-module theory (see  \cite{katz})-- one of his main goals being the understanding of the associated differential Galois group, its analogies to the $\ell $-adic world and applications to exponential sums over finite fields. In a generic situation (regarding the parameters), these equations are irreducible and even rigid (see \cite{blochesnault}).

Lately, there has been interest in the hypergeometric equation \eqref{eq:defHyp} regarding Hodge theoretic properties. If $n=m $, the equation \eqref{eq:defHyp} has regular singularities at $\{ 0, \glambda, \infty \} $. If $n>m $, the singularities are $\{0, \infty \} $ and $z=\infty $ is an irregular singularity. In the regular singular case, there are results by R. Fedorov \cite{fedorov} on the Hodge numbers of a natural variation of Hodge structures it underlies. In the irregular case, assuming the parameters are real, there is an underlying irregular Hodge structure in the sense of C. Sabbah \cite{sabbahHodge}. The corresponding irregular Hodge numbers are determined by C. Sabbah and J-D. Yu \cite{sabyu} -- special cases had been obtained before by A. Casta\~no Dom\'inguez, Th. Reichelt and Ch. Sevenheck (\cite{albrThomasSev1}, \cite{albrThomasSev2}).
 
In this article, we investigate the Stokes phenomenon of the confluent hypergeometric equation for $n>m $ at infinity. Since $z=\infty $ is an irregular singular point of \eqref{eq:defHyp}, the local isomorphism class of the equation at this point is determined by its Stokes structure in the sense of P. Deligne and B. Malgrange (see \cite{Mal91}). It is well-known (\cite{katz}), that the slopes at infinity are $0 $ and $1/d $ with $d\defeq n-m $. Hence, the pull-back with respect to the ramification map $y \mapsto y^d=z $ has slopes $0 $ and $1 $. The general theory (\cite{Mal91}, \cite{BJL79}) yields that the Stokes structure of the latter can be encoded in two complex matrices $S_+, S_- $ (the \emph{Stokes matrices} defined as the transition matrices of asymptotic solutions in two sectors of width $\pi + \varepsilon $ centered at $z=\infty $), one of them upper, the other lower triangular. The explicit presentation of these matrices involves several choices. 

As usual, we will assume a genericness assumption on the parameters $\alpha_j $ and $\beta_i $. If one moreover restricts to the case where all $\beta_j $ are pairwise non-equivalent modulo the integers, these Stokes matrices for $\opHypl{\ualpha}{\ubeta}{-1} $ have already been computed directly according to their definition (and therefore by producing the asymptotic solutions of \eqref{eq:defHyp} explicitly) by A. Duval and C. Mitschi in \cite{DM}. Their result includes the values of the Gamma-function on certain combinations of the parameters. The authors do not address the question of ambiguity of the Stokes matrices, but they fix the choices made in the procedure -- the given basis of formal solutions and their asymptotic lifts produced by standard methods. 

We propose a different way to obtain the Stokes matrices of \eqref{eq:defHyp} in the generic case (but without the more restrictive assumption). Our method is based on the result of A. D'Agnolo, G. Morando, C. Sabbah and the author in \cite{DHMS} in combination with a Theorem by N.~Katz. The latter represents the confluent case as the Fourier transform of a non-confluent one. The result of \cite{DHMS} gives the Stokes matrices of the Fourier transform of a regular singular D-module on the affine line once the associated perverse sheaf of solutions is sufficiently known. More precisely, this perverse sheaf can be described by its quiver (an object of linear algebra) and \cite{DHMS} directly gives a formula for the Stokes matrices in terms of the quiver. 

Usually, a major difficulty in determining the quiver of the perverse sheaf of solutions of a regular singular D-module lies in the fact that the information on the global monodromy of the local system of solutions away from the singularities is needed. The latter can be computed by solving the equation locally and studying its analytic continuation along paths -- a difficult task. In the case of the regular singular hypergeometric equation, under some non-resonance condition, there is a beautiful observation due to Levelt (about which we learnt from its application in the work of F. Beukers and G. Heckman (\cite{BH}) on the monodromy of the hypergeometric equation). We know that the local system is rigid, hence its global monodromy as a representation of the fundamental group is determined (up to conjugation) by the (conjugacy classes of) the individual local monodromies. A priori, this knowledge alone does not allow to determine the global monodromies explicitly from the local ones. Levelt's Lemma (Lemma \ref{lemma:Levelt} below) however produces such an explicit representation in the case of the regular singular hypergeometric equation. Since the non-resonance condition involves that the equation is irreducible, we know that the perverse sheaf is the middle extension of the local system and therefore, we can determine its quiver.

Our main result is the computation of the Stokes matrices for $\opHypabl $ for generic $\ualpha, \ubeta $ and any $\glambda \in \C^\times $. In the unramified case $d=1 $, we give two presentations, the first in companion form:
Let $\ualpha \defeq (\alpha_1, \ldots, \alpha_n) $ and $\ubeta \defeq (\beta_1, \ldots, \beta_{n-1}) $ be generic and consider the polynomials
\begin{align*}
\chi_B(X) \defeq \prod_{j=1}^{n-1} (X-\exp(-2\pi i \beta_j)) &= X^{n-1}+B_1 X^{n-2}+ B_2 X^{n-3} + \ldots + B_{n-1}, \\
\chi_A(X) \defeq \prod_{j=1}^n (X- \exp(-2\pi i \alpha_j)) &= X^n + A_1 X^{n-1} + A_2 X^{n-2}+ \ldots + A_n.
\end{align*}
Let us put $B_n \defeq 0 $. We denote by $\Co{-\ubeta} $ the companion matrix associated to the polynomial $\chi_B(X) $ -- see \eqref{eq:companiondef} and \eqref{eq:Cgamma}. We will write $1_k $ for the identity matrix of size $k \times k $.
\begin{theorem}[=Theorem \ref{thm:companion}]
For $d=1 $, the hypergeometric system $\Hypl{\ualpha}{\ubeta}{\glambda} $ at infinity is represented by the pair
\[
 S_+  = 
 \left(
 \begin{array}{c|c}
  1_{n-1} & x \\[.2cm] \hline
  0 & 1
\end{array}\right)
  \text{ and }
   S_- = 
  \left(
 \begin{array}{c|c}
  \mbox{$\Co{-\ubeta}$} & 0 \\[.2cm] \hline
y & \exp(2\pi i \lambda)
\end{array}\right)
\]
where $y = 
\big( (-1)^n\exp(2\pi i \sum_{j=1}^{n-1} \beta_j) ,0 \ldots, 0 \big) $, $\lambda = 1- \sum_{j=1}^n \alpha_j + \sum_{j=1}^{n-1} \beta_j $,
and $x={}^t(x_1, \ldots, x_{n-1}) $ with
\[
x_j =  A_{j+1}-B_{j+1} - (A_1-B_1) \cdot B_j
\] 
for $j=1, \ldots, n-1 $
\end{theorem}

The choices involved in this presentation by the pair $[S_+,S_-] $ are discussed in section \ref{sec:ambiguity}. Other than in the work of Duval-Mitschi, we do not assume that the $\beta_j $ are pairwise non-equivalent modulo the integers. We also compute a variant of the main result, where we give the Stokes matrices in terms of the Jordan blocks associated to the eigenvalues $\exp(-2\pi i \beta_j) $ and their multiplicities. 

\begin{theorem}[=Theorem \ref{thm:jordan}]
Assume $d=1 $. If we subdivide the roots of $\chi_B $ as
\[
\{ \lambda_1, \ldots, \lambda_\ell \} = \{ e^{-2\pi i \beta_1},\ldots,  e^{-2\pi i \beta_{n-1}} \}
\]
with pairwise different $\lambda_j $, and we denote by $\kappa_j$ the corresponding multiplicity, so that $\chi_B(X)=\prod_{j=1}^\ell (X- \lambda_j)^{\kappa_j} $, the equivalence class of Stokes matrices for the hypergeometric system $\Hypl{\ualpha}{\ubeta}{\glambda} $ at infinity is given by its normal form ($M_\infty^- $ being the formal monodromy -- Notation \ref{notation:formon}):
\[
[S_+,S_-] = \left[
\left( 
\begin{array}{c|c}
1_{n-1}  & 
z \\ \hline
0 &  1
\end{array}\right) \ , \ 
M_\infty^- \cdot 
\left( 
\begin{array}{c|c}
1_{n-1}  & 0 \\ \hline
\fre & 1
\end{array}\right) 
\right]
 \]
with $\fre=(1,0, \ldots, 0 \mid \ldots \mid 1,0, \ldots 0) \in \mathrm{Hom}(\C^n,\C) $ with blocks of size $\kappa_1, \ldots, \kappa_{\ell} $ and $
z = e^{-2\pi i \lambda} \cdot~{}^t(z_1, \ldots, z_\ell) $
with
\begin{equation*}
z_j = \TaylX{\frac{\chi_A(X)}{X}  \cdot \frac{(X-\lambda_j)^{\kappa_j}}{\chi_B(X)}}{\kappa_j}{\lambda_j} \text{\quad for $j=1, \ldots, \ell$ -- see Notation \ref{not:taylor}.} 
\end{equation*}
\end{theorem}
Note that a representative in \emph{normal form} (see Proposition \ref{prop:xy}) as in the result is uniquely determined by its class. We discuss cases where one obtains presentations with real or integer coefficients in the Stokes matrices -- section \ref{sec:cyclo}. In the diagonalizable case, we find the situation considered by Duval-Mitschi and we prove that our result is equivalent to the one of Duval-Mitschi.

In section \ref{sec:ramified}, we extend our main result to the ramified case $d \ge 2 $. We compute a representative of the Stokes matrices for the pull-back $\jint[d]^\ast \Hypabl $ in companion form -- Theorem \ref{thm:companionram}. The combinatorics of the presentation is more involved as in the unramified case. We deduce analogous statements on cases where the Stokes matrices have rational or real entries.

Note that the Stokes matrices of the Kummer pull-backs of hypergeometric systems arising from the quantum cohomology of the projective space were computed by D. Guzzetti in \cite{guz}. The hypergeometric systems in his work have parameters $\alpha_j=0 $ and hence do not satisfy the genericness assumptions in our approach.

\section{Confluent hypergeometric systems}

Let $z $ denote the coordinate in $\Af^1 $ and $\D:=\C[z,z^{-1}]\langle \partial_z \rangle $. Let $\ualpha=(\alpha_1, \ldots, \alpha_n) \in \C^n $ and $\ubeta=(\beta_1, \ldots, \beta_m) \in \C^m $ be fixed parameters. Furthermore, let $\glambda \in \C^\times $. We want to study the hypergeometric differential operator
\[
\opHypabl:=\glambda \cdot \prod_{i=1}^n (z\del_z-\alpha_i) - z \cdot \prod_{j=1}^m (z\del_z -\beta_j) \ .
\]
We denote by $\Hypabl:= \D/\D \opHypabl $ the corresponding module. It defines an algebraic D-module on the multiplicative group $\Gm $ which we denote with the same symbol. If $\glambda=1 $, we omit the subscript. The following are well-known facts (e.g. \cite{katz}):
\begin{enumerate}
 \item For $n=m $, the singularities of $\Hypabl $ are $\{0,\glambda,\infty \} $, and the module is regular singular at each of them.
 \item For $n \neq m $ -- the \emph{confluent case} -- the singularities are $\{0, \infty \} $. If $n>m $, the module is regular singular at $0 $ and irregular singular at $\infty $. For $n<m $, the opposite is true. 
\item Let us consider the confluent case with $n>m $. Then the irregular singularity at $\infty $ is ramified of order $d $.
% and the Levelt-Turrittin decomposition reads as
%\[
%[d]^\ast \Hypab \otimes_{\C[u]} \CL{u} \simeq
%\]
%where $y:=\tfrac{1}{z} $ is the coordinate at infinity and $[d]:u \mapsto y=u^d $ is the d-fold covering map.
\end{enumerate}
\begin{definition}
 The set of parameters $\uab \in \C^{n+m} $ is called \emph{non-resonant} if $\alpha_i - \beta_j \not\in \Z $ for all $i,j $.
\end{definition}

\begin{lemma}[{\cite[Corollary 3.2.1]{katz}}] 
The hypergeometric module $\Hypabl $ is irreducible if and only if $\uab $ are non-resonant. 
\end{lemma}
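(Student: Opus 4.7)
The plan is to handle the two implications separately via the standard D-module analysis of $\opHypabl $ in the Weyl algebra $\D $, leveraging the fundamental commutation identity
\[
f(z\del_z) \cdot z = z \cdot f(z\del_z + 1),
\]
which implies that the hypergeometric modules attached to $\{\alpha_i, \beta_j\} $ and to the shifted parameters $\{\alpha_i + k, \beta_j + k'\} $ (for any $k,k' \in \Z $) are isomorphic as D-modules on $\Gm $. This reduces resonance questions to equality questions.

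For the ``resonant $\Rightarrow$ reducible'' direction, assume $\alpha_i - \beta_j \in \Z $ for some $i,j $. After shifting as above we may assume $\alpha_i = \beta_j $ outright. Then $(z\del_z - \alpha_i) $ appears as a common right factor in both $\prod_{i'}(z\del_z - \alpha_{i'}) $ and $\prod_{j'}(z\del_z - \beta_{j'}) $, so a right-Euclidean division in $\D $ produces a proper right factor of $\opHypabl $. This exhibits $\Hypabl $ as an extension of strictly smaller nonzero D-modules, hence reducible.

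For the converse I would assume non-resonance and analyze the local structure of $\Hypabl $ at its singularities. At $z=0 $, which is regular singular in all cases, the local monodromy (equivalently, the set of Frobenius exponents) is governed by the family $\exp(2\pi i \beta_j) $. At the second finite singularity $z=\glambda $ in the regular singular case $n=m $, respectively at $z=\infty $ in the confluent case, the relevant local invariants (the monodromy eigenvalues, or the eigenvalues of the formal monodromy of the tame component at infinity in the confluent case) are governed by $\exp(2\pi i \alpha_i) $. Non-resonance means precisely that these two sets are disjoint modulo $\Z $. A hypothetical proper sub-D-module $N \subset \Hypabl $ would restrict to a proper local subsystem at each singular point, forcing an eigenvalue to be inherited \emph{both} from the $\{\beta_j\} $-family at $0 $ and from the $\{\alpha_i\} $-family at the other singularity; after tracing the rank-count around the punctured Riemann sphere (or the corresponding formal argument at $\infty $), this forces a coincidence modulo $\Z $, contradicting non-resonance.

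The main obstacle I anticipate is maintaining a uniform argument in the confluent range $n \neq m $, where the singularity at infinity is irregular and direct local monodromy analysis is no longer available: one must pass to the formal Turrittin--Levelt decomposition at $\infty $ and track how a submodule interacts with the slope filtration and the eigenvalues of the formal monodromy on the tame part. This is precisely the technical content of Katz's proof in \cite[Ch.~3]{katz}, on which the statement relies.
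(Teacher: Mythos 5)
The paper does not prove this lemma at all --- it is quoted directly from \cite[Corollary 3.2.1]{katz} --- so there is no in-text argument to compare against; judged on its own terms, your sketch has two genuine gaps. In the ``resonant $\Rightarrow$ reducible'' direction the step ``after shifting we may assume $\alpha_i=\beta_j$'' is unjustified. The commutation $f(z\del_z)\,z = z\,f(z\del_z+1)$ gives conjugation by $z$, and a direct computation yields
\[
z^{-1}\cdot\opHypabl\cdot z \;=\; \opHypl{\ualpha-1}{\ubeta-1}{\glambda},
\]
which shifts \emph{all} the $\alpha_i$ and \emph{all} the $\beta_j$ by the \emph{same} integer, so every difference $\alpha_i-\beta_j$ is unchanged. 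You cannot move a single $\alpha_i$ relative to a single $\beta_j$ this way; the claim that shifting an individual parameter by an integer does not change the isomorphism class of the $\D$-module on $\Gm$ is itself a nontrivial theorem of Katz (his shift/cancellation machinery in ch.~3), of essentially the same depth as the lemma you are trying to reprove. Once $\alpha_i=\beta_j$ is genuinely in force, your right-factorization of $\opHypabl$ and the resulting proper quotient do work, so the reduction step is exactly where the difficulty lies.

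In the ``non-resonant $\Rightarrow$ irreducible'' direction the argument is a heuristic rather than a proof: the assertion that a proper submodule forces an eigenvalue coincidence ``after tracing the rank-count'' is not made precise (one needs a genuine dimension or Euler-characteristic constraint relating the local data at $0$, at $\glambda$ or $\infty$, and in the confluent case the slope filtration and formal monodromy of the tame part at $\infty$), and you acknowledge yourself that this is ``precisely the technical content of Katz's proof.'' As it stands, both implications presuppose or defer to the result being quoted rather than establish it independently.
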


We will make use of the following Theorem due to N. Katz describing the confluent case in terms of the Fourier transform of a non-confluent hypergeometric system. Let us introduce some notation first.   We denote by $j:\Gm \to \Af^1 $ the inclusion. Recall that $\jint $ denotes the \emph{middle extension functor} for D-modules (cp. \cite[2.9]{katz}).

Let $\Af^1 $ be the dual affine line which we endow with the coordinate $t $. The Fourier transform of a $\Wz $-module $M $ is the $\Wt $-module $\Fou{M} $ given by the same $\C $-vector space $\Fou{M}=M $ with the action of $t $ and $\del_t $ given by $t \cdot m := \del_z m \text{ and } \del_t m := -z \cdot m $.

\begin{theorem}[{\cite[Theorem 6.2.1]{katz}}] \label{thm:katzFourier}
 If $\uab $ are non-resonant, additionally $d\alpha_i \not\in \Z $ for all $i=1, \ldots, n $, and $\Hypabl $ is not Kummer induced (see below), then
 \[
 \jint[d]^\ast \Hypabl \simeq \Fou{\big(\jint [d]^\ast \Hypl{\frac{1}{d}, \frac{2}{d}, \ldots, \frac{d}{d}, - \ubeta}{-\ualpha}{(d^d)/\glambda} \big)}
 \]
\end{theorem}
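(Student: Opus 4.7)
My plan is to verify the isomorphism by a direct operator-level computation on the $d$-fold cover, and then to lift the result from $\Gm$ to $\Af^1$ via middle extension. Under Fourier conjugation $z \mapsto -\partial_t$, $\partial_z \mapsto t$, each Euler factor $(z\partial_z - c)$ transforms to $-(t\partial_t + 1 + c)$. Applying this to the hypergeometric operator
\[ \opHypl{\tfrac{1}{d},\ldots,\tfrac{d}{d},-\ubeta}{-\ualpha}{(d^d)/\glambda}, \]
the Kummer block $\prod_{k=1}^{d}(z\partial_z - k/d)$ becomes, up to sign, $\prod_{k=1}^{d}(t\partial_t + 1 + k/d)$. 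Substituting $y = t^{1/d}$ (so that $t\partial_t = \tfrac{1}{d}\, y\partial_y$) turns this product of fractional Euler factors into $d^{-d}\prod_{k=d+1}^{2d}(y\partial_y + k)$, which telescopes via the identity $\prod_{k=a+1}^{a+d}(y\partial_y + k) = y^{-a}\partial_y^{d} y^{a+d}$ into $d^{-d}\, y^{-d}\partial_y^{d} y^{2d}$. On the cover, $\partial_y^{d}$ is the Fourier dual of multiplication by $z = y^d$, thereby matching the outer $z$-factor in $\opHypl{\ualpha}{\ubeta}{\glambda}$; the scalar $(d^d)/\glambda$ on the right then cancels the $d^{-d}$ from the substitution to yield $\glambda$ on the left, and the remaining Euler factors in $\ubeta$ and $\ualpha$ correspond after the sign flips $-\beta_j$, $-\alpha_i$ already built into the parameter list.

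This produces the desired isomorphism on a dense open of $\Gm$, and hence on all of $\Gm$. To pass from $\Gm$ to $\Af^1$, I would invoke that, under non-resonance and $d\alpha_i \notin \Z$, both hypergeometric modules are irreducible on $\Gm$ by the lemma cited above; the non-Kummer-induced hypothesis then ensures that $[d]^{\ast}\Hypabl$ remains irreducible on $\Gm$. Since the middle extension of an irreducible holonomic D-module from $\Gm$ to $\Af^1$ is unique, the $\Gm$-isomorphism lifts uniquely to the stated isomorphism of $\jint[d]^{\ast}$'s.

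The principal obstacle is the operator manipulation itself: rearranging the Fourier-transformed product of shifted Euler operators into the telescoping form requires careful tracking of signs, powers of $d$, and the noncommutativity of $t$ and $\partial_t$ across the substitution $y = t^{1/d}$. The exclusion of Kummer-induced modules is precisely what prevents the telescoping identity from degenerating to some proper divisor $d' \mid d$, which would introduce extra summands and break the match between the middle extensions.
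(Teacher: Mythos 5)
This is Theorem 6.2.1 of Katz's \emph{Exponential Sums and Differential Equations}, and the paper cites it as a black box without reproducing a proof, so there is no in-paper argument to compare against. That said, your sketch has a structural gap. In the statement, the ramified pullback $[d]^{\ast}$ is applied \emph{before} the middle extension $\jint$, which in turn is applied \emph{before} the Fourier transform $\Fou$; thus the Fourier transform on the right-hand side is taken in the coordinate $y$ of the cover. Your computation instead applies the Fourier substitution $z\mapsto -\partial_t$, $\partial_z\mapsto t$ in the \emph{original} coordinate and only afterwards passes to $y=t^{1/d}$. These two orders of operations are not interchangeable: $\Fou$ and $[d]^{\ast}$ do not commute (they are not even defined on the same spaces, as $\Fou$ is a global operation on $\Af^1$ while $[d]$ is a self-map of $\Gm$), so a match of operators obtained that way does not establish the asserted isomorphism.

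A second, deeper gap is the treatment of middle extension. You invoke uniqueness of $\jint$ to ``lift'' a $\Gm$-isomorphism, but the right-hand side is $\Fou\bigl(\jint[d]^{\ast}(\cdot)\bigr)$, not $\jint\Fou(\cdot)$; the Fourier transform of a middle extension is not a middle extension for formal reasons, and the content of Katz's theorem is in large part that under the non-resonance, $d\alpha_i\notin\Z$, and non-Kummer-induced hypotheses, the Fourier transform of this particular middle extension is again a middle extension whose restriction to $\Gm$ can be identified. One cannot get this by restricting to a dense open and citing uniqueness, because $\Fou$ does not commute with restriction to $\Gm$. Katz's proof (to which the paper defers) goes through the convolution structure of hypergeometric D-modules and local Fourier transforms / stationary phase, not a direct operator-level conjugation; the operator manipulation you sketch would at best verify the $d=1$ case on the open set where everything is generic, and even there the match requires normalizing by left multiplication that your sketch does not track.
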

Note that the hypergeometric module on the right hand side is of type $(n,n) $, hence regular singular at $0, \rglambda \defeq (d^d)\glambda^{-1}, \infty $.

\begin{remark}
A hypergeometric system is called Kummer induced if it is the push-forward of another hypergeometric system with respect to the $d $-fold ramification map $z \mapsto z^d $ on $\Gm $. The Kummer Recognition Lemma 3.5.6 in \cite{katz} does exactly what its name says. In particular, $\Hypabl $ cannot be Kummer induced if $\text{gcd}(n,m)=1 $, e.g. if $m=n-1 $.
\end{remark}

In the first part of the article, we will consider the unramified case for generic parameters. In section \ref{sec:ramified} we extend the results to the ramified case.

\begin{assumption}\label{ass:generic}
We assume that $m=n-1 $, $\uab $ are non-resonant and $\alpha_i \not\in \Z $ for all $i=1, \ldots, n $. We will call the parameters satisfying this assumption to be \emph{generic}. 
\end{assumption}

Note that $\ualpha=(\alpha_1, \ldots, \alpha_n) $ and $\ubeta=(\beta_1, \ldots, \beta_{n-1}) $. Katz's Theorem then tells us that
\begin{equation}\label{eq:FouH1}
\jint \Hypabl \simeq   \Fou{\big( \jint \Hypl{1, -\ubeta}{-\ualpha}{\glambda^{-1}} \big)} \ .
\end{equation}
We will use \eqref{eq:FouH1} for our computation of the Stokes matrices at infinity of the left hand side.

\begin{remark}\label{rem:statphas}
By the stationary phase result of Bloch-Esnault \cite{blochesnault} and Garcia-Lopez \cite{garc}, we deduce from \eqref{eq:FouH1} that the exponential factors in the formal decomposition (Levelt-Turrittin decomposition) of $\Hypabl $ at $z=\infty $ are $1=e^{0 z} $ and $e^{\glambda^{-1} z} $ determined by the singularities of its regular singular inverse Fourier transform. 
\end{remark}

\section{Stokes matrices} \label{sec:Stokesgeneral}

The local isomorphism class of $\Hypabl $ at infinity can be described by Stokes matrices (sometimes called Stokes multipliers). We shortly recall their definition and the approach used in \cite{DHMS} based on D'Agnolo-Kashiwara's Riemann-Hilbert correspondence.

\subsection{Definition of the Stokes matrices}\label{sec:dhms}

Usually the local classification proceeds as follows. After formal completion (and ramification of the coordinate), the connection is isomorphic to a direct sum of elementary exponential connections twisted by regular singular connections. Let us consider a meromorphic connection $M $ at infinity of exponential type, i.e. assuming
\begin{equation}\label{eq:LT}
\formalz{M} \simeq \bigoplus_{c \in \Sigma}  \big(E^{c z} \otimes R_c \big)
\end{equation}
for some finite subset $\Sigma \subset \C $ where $E^{c z} $ is the rank one connection $\nabla=d-d(c z) $ (with solution $e^{c z} $) and $R_c $ is regular singular. Note that $0 \in \Sigma $ is allowed. 

Choosing a nearby point $x \neq \infty $, each $R_c $ is uniquely determined (up to isomorphism) by the stalk $\Psi_c \defeq \shs ol(R_c)_x $ of its solutions sheaf at $x $ and the monodromy $T_c$. The linear isomorphism
\[
\bigoplus_{c\in \Sigma} T_c \in \mathrm{Aut}(\bigoplus_{c \in \Sigma}\Psi_c)
\]
consisting of the diagonal blocks is the \emph{formal monodromy} of $M $. 

As a second step, one considers formal solutions of $M $ using \eqref{eq:LT} and looks for asymptotic lifts (following the work of Hukuhara, Malgrange and Sibuya). Asymptotic lifts exist on sectors of width $\pi+\varepsilon $ (Balser, Jurkhat, Lutz). The Stokes matrices are then by definition the transition matrices of the asymptotic solutions on the intersection of the sectors -- see for example Definition 3.4 of \cite{DM}. 

\begin{remark}
The definition of the Stokes matrices (as e.g. in \cite{DM}, \cite{cotti}) hinted on above requires several choices. Usually, the question of ambiguity is not addressed. The reason is that in the situations under consideration there are standard constructions for the formal solutions and standard procedures to produce asymptotic lifts (Borel (multi-)summation method). Additionally, the formal monodromy usually is separated from the Stokes matrices and the latter are then required to have the identities along the block diagonal. We will come back to this in subsection \ref{subsec:equrel}.
\end{remark}

In \cite{DK16} A. D'Agnolo and M. Kashiwara introduce the category of enhanced ind-sheaves and prove a Riemann-Hilbert correspondence for holonomic D-modules in any dimension. In \cite[9.6]{DK16} they describe how the Stokes matrices are encoded in the enhanced ind-solution sheaf. The Stokes matrices then are defined to be the transition matrices of the associated enhanced solutions sheaves on the sectors. 

The result of \cite{DHMS} on which our computations will rely takes up this point of view. Let $\pi:\Af^1 \times \R \to \Af^1 $ be the projection -- the enhanced sheaves live on $\Af^1 \times \R $. If $K \defeq \solE(M) $ is the enhanced solutions ind-sheaf of $M $ (see \cite{DK16}), the formal decomposition and the asymptotic lifting property induce isomorphisms
\[
\pi^{-1} \C_S \otimes K \isoto \pi^{-1} \C_S \otimes \bigoplus_{c\in \Sigma} \big( \mathbb{E}^{c z} \otimes \Psi_c \big)
\]
in D'Agnolo-Kashiwara's category $\enh^{\mathrm{b}}_\Rc(\iCfield_{\Af^1}) $ for a sector $S \subset \Af^1 $ where this lift exists. Here $\mathbb{E}^{c z} \defeq \solE(E^{c z}) $ is the enhanced solutions sheaf of the exponential connection. Note that $\mathbb{E}^0 $ is equal to $\C^\enh_{\Af^1} $ in the notation of \cite{DK16}.

The choice of the sectors and orientation is encoded by a fixed pair $\fra,\frb \in \C^\times $ (in \cite{DHMS} these are denoted $\alpha, \beta $) such that
\begin{align}\label{eq:alphbet}
&\Re(\fra \cdot \frb) = 0, \\ \notag
&\Re((c-c') \cdot \frb) \neq 0, \quad\forall c, c'\in \Sigma,\ c\neq c'.
\end{align}
The latter induces the ordering
\begin{equation}\label{eq:orderingfrb}
c<_\frb c' \Longleftrightarrow \Re(c \frb) < \Re(c' \frb).
\end{equation}

According to \cite{DHMS}, we consider the (closed) sectors $H_{\pm \fra} \defeq \{ z \in \Af^1 \mid \pm \Re (\fra z) \ge 0 \}$. If we write $h_{\pm \frb} \defeq \pm \R_{>0} \frb $ for the half-lines with direction $\pm\frb $, we have $H_\fra \cap H_{-\fra} =h_{\frb} \cup h_{-\frb} $. We then get isomorphisms
\begin{equation} \label{eq:salpha}
s_{\pm \fra}: \pi^{-1} \C_{H_{\pm \fra}} \otimes K 
 \isoto 
\pi^{-1} \C_{H_{\pm \fra}} \otimes \bigoplus_{c \in \Sigma} \big(\mathbb{E}^{c z} \otimes \Psi_c \big)
\end{equation}
We have both isomorphisms over the intersections and hence we can define the transition isomorphisms as
\begin{align}
&\widetilde{s}_{\pm \frb} \in \mathrm{Aut}\big( 
\pi^{-1} \C_{h_{\pm \frb}} \otimes \bigoplus_{c\in \Sigma} (\mathbb{E}^{c z} \otimes \Psi_c)\big), \notag \\ \label{eq:Stokesdef}
& \widetilde{s}_{\pm \frb} \defeq \big( \pi^{-1} \C_{h_{\pm \frb} }\otimes s_{-\fra} \big) \circ 
\big( \pi^{-1} \C_{h_{\pm \frb}} \otimes s_{+\fra} \big)^{-1}.
\end{align}
According to \cite[9.8]{DK16} (see also \cite[Lemma 5.2]{DHMS}), for any $c,c' \in \C $ and vector spaces $\Psi_c $ and $\Psi_{c'} $, we have
\begin{multline*}
{\Hom}_{\enh^{\mathrm{b}}_\Rc(\iCfield_X)}
(\pi^{-1} \C_{h_{\frb}} \otimes \mathbb{E}^{c z} \otimes \Psi_c,
\pi^{-1} \C_{h_{\frb}} \otimes \mathbb{E}^{c' z} \otimes \Psi_{c'}) \simeq \\
\left\{
\begin{array}{ll}
{\Hom}_{\C}(\Psi_c,\Psi_{c'}) & \text{if $\Re(c\frb)>\Re(c' \frb)$}\\
 0 & \text{if $\Re(c\frb)<\Re(c'\frb)$}
\end{array}\right.
\end{multline*}
(similarly for $-\frb $ instead of $\frb $) and
\begin{equation}\label{eq:t}
\mathrm{End}_{\enh^{\mathrm{b}}_\Rc(\iCfield_X)} \big(
\pi^{-1} \C_{H_{\pm \fra}} \otimes \bigoplus_{c\in \Sigma} (\mathbb{E}^{c z} \otimes \Psi_c) \big) \simeq \mathfrak{t}
\end{equation}
where $\mathfrak{t} \subset \mathrm{End}_\C (\bigoplus_{c\in \Sigma} \Psi_c) $ is the subspace of block diagonal matrices. In particular, the isomorphisms \eqref{eq:salpha} are unique up to base-change by block-diagonal matrices in $\mathfrak{t} $.

Let $\mathbb{V} \defeq \bigoplus_{c\in \Sigma} \Psi_c $ and let us denote by $\mathrm{End}^{\pm}(\mathbb{V}) $ the subspace of upper/lower block triangular matrices with respect to the ordering \eqref{eq:orderingfrb}. We have the isomorphisms
\[
e_{\pm \frb}: 
\mathrm{End}_{\enh^{\mathrm{b}}_\Rc(\iCfield_X)} \big(
\pi^{-1} \C_{h_{\pm \frb}} \otimes \bigoplus_{c\in \Sigma} (\mathbb{E}^{cz} \otimes \Psi_c ) \big) \isoto
\mathrm{End}^{\pm}(\mathbb{V}) .
\]
\begin{definition}
 The \emph{Stokes matrices} of $\Hypabl $ at $z=\infty $ are the linear maps
 \[
 S_{\pm} \defeq e_{\pm \frb}^{-1}(\widetilde{s}_{\pm \frb}) \in \mathrm{End}^{\pm}(\mathbb{V}).
 \]
 Choosing a basis for $\mathbb{V} $ respecting the direct sum decomposition yields matrices $S_{\pm} \in \C^{n \times n} $ with $S_+ $ being upper and $S_- $ lower block triangular.
 \end{definition}

\begin{remark}
In \eqref{eq:Stokesdef} both Stokes matrices are defined as the transition from the solutions over $H_\alpha $ to $H_{-\alpha} $ -- see Figure \ref{fig:sectors}. The topological monodromy is the product $T^{\text{top}}= S_+^{-1} \cdot S_- $.
\end{remark}

\subsection{Ambiguity of Stokes matrices} \label{sec:ambiguity}

We keep the situation from above.
\begin{definition}
We fix the formal model $\model{M} := \bigoplus_{c\in \Sigma} ( E^{c z} \otimes R_c ) $ and let the regular singular connetions be determined by the formal monodromy $R_c  \simeq (\Psi_c, T_c) $. Then $ \Md{\model{M}} $ denotes the set of local isomorphism classes of germs of meromorphic connections $M $ at $z=\infty $ with
\[
\formalz{M} \cong \formalz{\model{M}}.
\]
\end{definition}

Any $[M]_\simeq \in \Md{\model{M}} $ gives rise to its pair of Stokes matrices $S_{\pm} $. We choose basis for $\Psi_j $ and read them as matrices in $\GL_n(\C) $. Let $r_c \defeq \dim(\Psi_c) $. The pair $\fra, \frb $ induces a total ordering on $\Sigma $ by \eqref{eq:orderingfrb}. Let us write 
\[
\Sigma= \{ c_1 <_\frb c_2 <_\frb \ldots <_\frb c_q \} \ .
\]
We decompose $\C^n=\bigoplus_{c\in \Sigma} \Psi_c $ and read the matrices $S \in \GL_n(\C) $ as composed into blocks according to this decomposition. We denote by $S^{ij} \in M(r_j \times r_i,\C) $ the corresponding block $S^{ij}:\Psi_{c_i} \to \Psi_{c_j} $, $i,j \in \{1, \ldots, q \} $.
Let 
\[
\Delta_{\Sigma} \defeq \GL_{r_1}(\C) \times \ldots \times \GL_{r_q}(\C)
\]
be the invertible \emph{block diagonal} matrices, 
\begin{align}\label{eq:defUL}
U_\Sigma & \defeq \{ S \in \GL_n(\C) \mid S^{ij}=0 \text{ for } c_i<_\frb c_j \} \\ \notag
L_\Sigma & \defeq  \{ S \in \GL_n(\C) \mid S^{ij}=0 \text{ for } c_i>_\frb c_j \} 
\end{align}
denote the \emph{block upper/lower triangular} matrices.

\begin{definition}\label{def:equivrel}
We define the following equivalence relation on $U_{\Sigma} \times L_{\Sigma} $
to be
\[
(S_+,S_-) \sim (S_+',S_-') :\Longleftrightarrow
\left\{ 
\begin{array}{l}
S_+'=AS_+ B \text{ and } \\
S_-'=AS_-B 
\end{array}\right\}
\text{ for some } A,B \in \Delta_{\Sigma}.
\]
Let $\Stmatr{\Sigma} $ denote the set of equivalence classes
\[
\Stmatr{\Sigma} := U_{\Sigma} \times L_{\Sigma} / \sim
 \]
and let us write $[S_+,S_-] \in \Stmatr{\Sigma} $ for the class represented by $(S_+,S_-) $.
\end{definition}

Clearly, the equivalence relation corresponds to the ambiguity in the choice of trivializations on the sectors as in \eqref{eq:salpha} (due to \eqref{eq:t}) and in the choice of a basis for $\Psi_c $, $c \in \Sigma $. The local classification thus can be stated as follows.
\begin{proposition}\label{prop:StokesRH}
In the situation above (including fixing $(\fra,\frb) $), the map
 \[
 \Md{\model{M}} \to \Stmatr{\Sigma} \ , \ M \mapsto [S_+, S_-]
 \]
is a bijection.
\end{proposition}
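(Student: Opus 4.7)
The plan is to verify that the map is well defined, injective, and surjective, relying on the D'Agnolo--Kashiwara enhanced Riemann--Hilbert correspondence \cite{DK16} together with the enhanced-sheaf reformulation of the Malgrange--Sibuya classification that underlies \cite{DHMS}. At heart, the proposition recasts the classical local moduli description (Stokes cocycles modulo coboundaries) in the language of enhanced ind-sheaves on the fundamental sectors $H_{\pm\alpha}$. The key input is that, by convention, $\Re(\alpha\beta)=0$ and the two exponential factors $\{0,\rglambda z\}$ produce only one pair of Stokes directions, namely $\pm\beta$, so each closed half-plane $H_{\pm\alpha}$ is a fundamental sector in the classical sense.

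For well-definedness and injectivity, I would first note that the existence of the sectorial trivializations $s_{\pm\alpha}$ in \eqref{eq:salpha} is the enhanced-sheaf incarnation of the Hukuhara--Turrittin--Sibuya asymptotic lifting theorem (equivalently, the Balser--Jurkat--Lutz result recalled in the text). By \eqref{eq:t}, any two trivializations on $H_{\pm\alpha}$ differ by composition with a block-diagonal automorphism $\phi_\pm\in\Delta_{0,\rglambda}$ of the model, and a direct computation from \eqref{eq:Stokesdef} shows that replacing $s_{+\alpha}$ by $s_{+\alpha}\circ\phi_+^{-1}$ and $s_{-\alpha}$ by $s_{-\alpha}\circ\phi_-^{-1}$ changes $\sigma_{\pm\beta}$ into $\phi_-^{-1}\circ\sigma_{\pm\beta}\circ\phi_+$. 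Reading via $e_{\pm\beta}$ in chosen bases of $\Psi_0$ and $\Psi_\rglambda$, and absorbing the further basis ambiguity into the same kind of action, yields exactly the relation of Definition~\ref{def:equivrel}, so the class $[S_+,S_-]\in\Stmatr{0,\rglambda}$ depends only on the isomorphism class of $M$. Conversely, if two germs $M,M'\in\Md{\model{M}}$ yield equivalent classes, I would adjust their trivializations by suitable $A,B\in\Delta_{0,\rglambda}$ so that the transition morphisms $\sigma_{\pm\beta}$ literally coincide on the rays $h_{\pm\beta}$. Then $\solE(M)$ and $\solE(M')$ are obtained by gluing the same model on $H_{+\alpha}$ and $H_{-\alpha}$ along identical data on $h_{\pm\beta}$, hence are isomorphic on a neighborhood of infinity, and full faithfulness of $\solE$ on holonomic D-modules \cite{DK16} lifts this to an isomorphism $M\simeq M'$.

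For surjectivity, given an arbitrary pair $(S_+,S_-)\in U_{0,\rglambda}\times L_{0,\rglambda}$, I would construct an enhanced ind-sheaf $K$ near infinity by gluing two copies of the model $(\mathbb{E}^0\otimes\Psi_0)\oplus(\mathbb{E}^{\rglambda z}\otimes\Psi_\rglambda)$, one over $H_{+\alpha}$ and one over $H_{-\alpha}$, along the disjoint rays $h_{+\beta}$ and $h_{-\beta}$ via the transition morphisms $e_{\pm\beta}^{-1}(S_\pm)$. The compatibility of the gluing is automatic because $h_{+\beta}\cap h_{-\beta}=\emptyset$. Essential surjectivity of the enhanced Riemann--Hilbert functor on the subcategory of enhanced ind-sheaves of holonomic type then produces a germ $M$ of meromorphic connection at infinity with $\solE(M)\simeq K$; by construction $\formalz{M}\simeq\model{M}$, and the associated Stokes matrices represent the prescribed class. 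The main obstacle is invoking exactly the right reconstruction statement from \cite{DK16} that guarantees such a glued enhanced ind-sheaf is effectively realized by a holonomic D-module; this is the step already packaged in \cite{DHMS} for the present context, so once that machinery is in place the remainder amounts to bookkeeping verifying that the equivalence relation of Definition~\ref{def:equivrel} matches the inherent ambiguities in the choices of trivializations and of bases for $\Psi_0$ and $\Psi_\rglambda$.
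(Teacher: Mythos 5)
The paper gives no explicit proof of Proposition~\ref{prop:StokesRH}: it is presented as an immediate reformulation of the classical local classification (Hukuhara--Sibuya / Balser--Jurkat--Lutz / Malgrange), justified only by the preceding sentence that the equivalence relation of Definition~\ref{def:equivrel} reflects the ambiguity in the sectorial trivializations \eqref{eq:salpha} due to \eqref{eq:t} and in the choice of bases. Your proposal unpacks exactly this implicit argument via the enhanced Riemann--Hilbert correspondence of \cite{DK16}, which is the framework the paper itself adopts, so you are taking essentially the same route, only more explicitly. One small wrinkle: in the well-definedness step you write the ambiguity as precomposition $s_{\pm\alpha}\mapsto s_{\pm\alpha}\circ\phi_\pm^{-1}$, but the automorphisms furnished by \eqref{eq:t} act on the \emph{model}, i.e.\ by postcomposition $s_{\pm\alpha}\mapsto\phi_\pm\circ s_{\pm\alpha}$, giving $\sigma_{\pm\beta}\mapsto\phi_-\circ\sigma_{\pm\beta}\circ\phi_+^{-1}$; this still lands you in the same equivalence class with $A=\phi_-$, $B=\phi_+^{-1}$, so the conclusion is unaffected. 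Otherwise the argument is sound, and the acknowledged reliance on the reconstruction/essential-surjectivity statement for $\solE$ is precisely the content packaged in \cite{DK16} and \cite{DHMS} that the paper is implicitly invoking.
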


\section{Quiver associated to the perverse sheaf of a regular singular system}

\subsection{The quiver of a perverse sheaf}

We quickly recall the description of a perverse sheaves on the affine line with singular support in $\Sigma \subset \Af^1 $ in terms of quivers as in \cite[\S 4]{DHMS}. The description depends on a fixed pair $\fra,\frb \in \C^\times $ as in \eqref{eq:alphbet}. 

For each $c \in \Sigma $ we define $\ell_c:=c+ \R_{\ge 0} \fra $ and $\ell^\times_c:=\ell_c \smallsetminus \{ c \} $ to be the closed/open half lines starting at $c $ with direction $\fra $. Additionally, define $\ell_\Sigma:= \bigcup_{c\in \Sigma} \ell_c $. Recall that we have the total order $<_\frb $ on the set $\Sigma $ defined in \eqref{eq:orderingfrb}. For any $c \in \Sigma $, we write $c_- = \max \{ c' \in \Sigma \mid c'<_\frb c \} $ and $c_+ =\min \{ c' \in \Sigma \mid c<_\frb c' \} $. For $c=c_1 $, we use the notation $(c_1)_- = -\infty_\frb $, and for $c=c_\ell $ analogously $(c_\ell)_+=\infty_\frb $ and we agree that $\Re(\rglambda (-\infty_\frb)) < \Re(\rglambda z) < \Re(\rglambda \infty_\frb)  $ is true for all $z \in \Af^1 $. As in \cite{DHMS}, consider the strips
\begin{align*}
  B_c^> &\defeq \{ z \mid \Re(\frb c) < \Re(\frb z) <\Re(\frb c_+) \} &   B_c^> &\defeq \{ z \mid \Re(\frb c_-) < \Re(\frb z)<\Re(\frb c) \}  \\
 B_c^\ge &\defeq \{ z \mid\Re(\frb c) \le \Re(\frb z) <\Re(\frb c_+)\} &  B_c^\le &\defeq \{ z \mid \Re(\frb c_-) < \Re(\frb z) \le \Re(\frb c_+)\}\\
 B_c &\defeq \{ z \mid \Re(\frb c_-)< \Re(\frb z) < \Re(\frb c_+)\} 
\end{align*}
%\begin{align*}
%  B_0^> &\defeq \{ z \mid 0<\Re(\frb z)<\Re(\frb \rglambda) \} &   B_\rglambda^> &\defeq \{ z \mid \Re(\frb \rglambda)<\Re(\frb z) \}  \\
% B_0^\le &\defeq \{ z \mid \Re(\frb z) \le 0\} &  B_\rglambda^\le &\defeq \{ z \mid 0 < \Re(\frb z) \le \Re(\frb\rglambda) \}\\
% B_0 &\defeq \{ z \mid \Re(\frb z)< \Re(\frb \rglambda)\} &  B_\rglambda &\defeq \{ z \mid 0 < \Re(\frb z) \}.
%\end{align*}
The pair $\fra, \frb $ induces an orientation on $\C $ given by the basis consisting of $\fra $ and a tangent vector of the rotation of $\ell_c $ in the direction of $B_c^\le $. We will write $\field\defeq \C $.

\begin{figure}\centering
\begin{tikzpicture}[scale=.6,
	background rectangle/.style={draw=black,dashed,fill=white}, 
	show background rectangle]
\begin{scope}
\clip (-1.5,-1.2) rectangle (1.8, 1.5); 
\coordinate (C1a) at (.1,.3); % was (-.5,.3)
\coordinate (C1b) at (-.8,.3);
\coordinate (C3) at (1,.3);
\filldraw[fill=black, draw=black] (C1a) circle (2pt) node[below]{$c$};
\draw (C3) \crosss node[below]{$c_+$};
\draw (C1b) \crosss node[below]{$c_-$};
\draw[thick] (C1a) -- node[right]{$\ell_c$}++(0,1.3);
\end{scope}
\end{tikzpicture}
\qquad
\begin{tikzpicture}[scale=.6,
	background rectangle/.style={draw=black,dashed,fill=white}, 
	show background rectangle]
\begin{scope}
\clip (-1.5,-1.2) rectangle (1.8, 1.5); 
\coordinate (C1a) at (.1,.3);
\coordinate (C1b) at (-.8,.3);
\coordinate (C3) at (1,.3);
\filldraw[fill=black!20, draw=black, style=densely dotted] (.1,-2) rectangle (1,2);
\filldraw[fill=white, draw=white] (C1a)++(-.02,0) rectangle (-.48,2);
\draw[densely dotted] (C1a) -- node[right]{$B_c^>$}++(0,1.3);
\filldraw[fill=white, draw=black] (C1a) circle (2pt) node[below]{$c$};
\draw (C3) \crosss node[below]{$c_+$};
\draw (C1b) \crosss node[below]{$c_-$};
\end{scope}
\end{tikzpicture}
\qquad
\begin{tikzpicture}[scale=.6,
	background rectangle/.style={draw=black,dashed,fill=white}, 
	show background rectangle]
\begin{scope}
\clip (-1.5,-1.2) rectangle (1.8, 1.5); 
\coordinate (C1a) at (.1,.3);
\coordinate (C1b) at (-.8,.3);
\coordinate (C3) at (1,.3);
\filldraw[fill=black!20, draw=black, style=densely dotted] (.1,-2) rectangle (1,2);
\filldraw[fill=white, draw=white] (C1a)++(-.02,0) rectangle (-.48,2);
\draw[densely dotted] (C1a) -- node[right]{$B_c^\ge$}++(0,1.3);
\filldraw[fill=black, draw=black] (C1a) circle (2pt) node[below,left]{$c$};
\draw (C3) \crosss node[below]{$c_+$};
\draw (C1b) \crosss node[below]{$c_-$};
\draw[thick] (C1a) -- ++(0,1.3);
\draw[thick] (C1a) -- ++(0,-1.5);
\end{scope}
\end{tikzpicture}
\qquad
\begin{tikzpicture}[scale=.6,
	background rectangle/.style={draw=black,dashed,fill=white}, 
	show background rectangle]
\begin{scope}
\clip (-1.5,-1.2) rectangle (1.8, 1.5); 
\coordinate (C1a) at (.1,.3);
\coordinate (C1b) at (-.8,.3);
\filldraw[fill=black!20, draw=black, style=densely dotted] (-2,-2) rectangle (2,2);
\filldraw[fill=white, draw=white] (C1a)++(-.02,0) rectangle (.12,2); %(-.48,2);
\draw[densely dotted] (C1a) --  ++(0,1.3);
\filldraw[fill=white, draw=black] (C1a) circle (2pt);
\filldraw[fill=white, draw=white] (C1b)++(-.02,0) rectangle (-.78,2); %(-.48,2);
\draw[densely dotted] (C1b) --  ++(0,1.3);
\filldraw[fill=white, draw=black] (C1b) circle (2pt);
\filldraw[fill=white, draw=white] (C3)++(-.02,0) rectangle (1.02,2);
\draw[densely dotted] (C3) --  ++(0,2);
\filldraw[fill=white, draw=black] (C3) circle (2pt);
\draw (.5,-.4) node {$\mathbb{A}^1 \smallsetminus \ell_\Sigma $};
\end{scope}
\end{tikzpicture}
\caption{The sets $\ell_{c}$, $B_{c}^>$, $B_c^\ge $, and $\Af^1 \smallsetminus \ell_\Sigma$.}\label{fig:Quiver1}
\end{figure}

\begin{definition}[{\cite[Definition 4.6]{DHMS}}]
Let $c\in \Sigma$ and $F \in\Perv_\Sigma(\Af^1)$. The complexes of \emph{nearby cycles at $c$}, \emph{vanishing cycles at $c$}, and \emph{global nearby cycles} are defined by the formulas
\begin{align*}
\Psi_c^{(\fra,\frb)}(F)=\Psi_c(F) &\defeq \rsect_\rc(\Af^1; \field_{\ell^\times_c} \otimes F), \\
\Phi_c^{(\fra,\frb)}(F)=\Phi_c(F) &\defeq \rsect_\rc(\Af^1;\field_{\ell_c} \otimes F),\\
\Psi^{(\fra,\frb)}(F)=\Psi(F) &\defeq \rsect_\rc(\Af^1;\field_{\Af^1\setminus\ell_\Sigma} \otimes F)[1].
\end{align*}
\end{definition}
Note that these definitions do not depend on $\frb $, but the following construction of the canonical and variation maps does. If $L:=F|_U $ denotes the associated local system on $U:=\Gm \smallsetminus \{1  \} $, it is easily seen that
\begin{align*}
 \Psi_c(F) & \simeq H^0 \Psi_c(F) \simeq H^0 \rsect_\rc(\ell^\times_c,F) \simeq H^1_c(\ell^\times_c,L) \\
 \Phi_c(F) & \simeq H^0 \Phi_c(F) \simeq H^0 \rsect_\rc(\ell_c,F) \\
  \Psi(F) & \simeq H^0 \Psi_\Sigma(F)  \simeq H^2_c(\Af^1 \smallsetminus \ell_\Sigma, L).
\end{align*}

Let us fix an isomorphism between the local and the global nearby cycles. To this end consider the short exact sequences (using the notation $\field_Z := (j_Z)_! (j_Z)^\ast \underline{\field} $ for a subspace $j_Z: Z \hookrightarrow \Af^1 $):
\begin{align*}
 & 0 \to \field_{B_c^>} \to \field_{B_c^> \cup \ell^\times_c} \to \field_{\ell^\times_c} \to 0 \\
 & 0 \to \field_{B_c^>} \to \field_{\Af^1 \smallsetminus \ell_\Sigma} \to \field_{(\Af^1 \smallsetminus \ell_\Sigma) \smallsetminus B_c^>} \to 0 
\end{align*}
The spaces $B_c^> \cup \ell^\times_c $ and $(\Af^1 \smallsetminus \ell_\Sigma) \smallsetminus B_c^> $ produce no cohomology with compact support for any $F \in \Perv_\Sigma $ -- in the notion of \cite{DHMS}, they are $\Sigma $-negligible. Therefore, we obtain the isomorphisms
\begin{equation}\label{eq:isolocglob}
\psi_c:\Psi_c(F) \isoto \rsect_\rc(\Af^1; \field_{B^>_c} \otimes F)[1] \isoto \rsect_\rc(\Af^1; \field_{\Af^1 \smallsetminus \ell_\Sigma} \otimes F)[1] = \Psi(F) .
\end{equation}

Now, the local version of the canonical morphism
\[
u_{cc}:\Psi_c(F) = \rsect_\rc(\Af^1; \field_{\ell^\times_c} \otimes F) \to 
\rsect_\rc(\Af^1; \field_{\ell_c} \otimes F) = \Phi_c(F)
\]
is induced by the canonical inclusion. It defines the global canonical map
\begin{equation}
 u_c: \Psi(F) \isoto[\psi_c^{-1}] \Psi_c(F) \to[u_{cc}] \Phi_c(F) 
\end{equation}

The exact squences
\begin{align*}
& 0 \to \field_{B_c \smallsetminus \ell_c} \to \field_{B_c} \to \field_{\ell_c} \to 0 \\
& 0 \to \field_{B_c \smallsetminus \ell_c} \to \field_{\Af^1\smallsetminus \ell_\Sigma} \to \field_{(\Af^1\smallsetminus \ell_\Sigma) \smallsetminus B_c} \to 0
\end{align*}
give rise to the morphism and isomorphism (since $(\Af^1\smallsetminus \ell_\Sigma) \smallsetminus B_c $ is $\Sigma $-negligible)
\begin{multline*}
v_c: \Phi_c(F)=\rsect_\rc(\Af^1; \field_{\ell_c} \otimes F) \to \rsect_\rc(\Af^1; \field_{B_c \smallsetminus \ell_c} \otimes F)[1] \isoto \\
\rsect_\rc(\Af^1; \field_{\Af^1 \smallsetminus \ell_\Sigma} \otimes F)[1] = \Psi(F)
\end{multline*}
As with the map $u_c $, there is a local variant
\[
v_{cc}= (\psi_c)^{-1} \circ v_c: \Psi_c(F) \to \Psi(F) \to \Psi_c(F) \ .
\]
For any $F \in \Perv_\Sigma(\Af^1) $, the \emph{quiver} of $F $ is defined to be
\[
Q^{(\fra,\frb)}_\Sigma(F):= 
\xymatrix{
{\big( \Psi(F)} \ar@<.5ex>[r]^{u_\rglambda}  & 
{\Phi_\rglambda(F) \big)_{c \in \Sigma}} \ar@<.5ex>[l]^{v_\rglambda}}
\]
This defines the functor
\[
Q^{(\fra,\frb)}_\Sigma(F): \Perv_\Sigma(\Af^1) \to \Quiv_\Sigma
\]
to the abelian category of quivers indexed by $\Sigma $, i.e. diagrams of finite dimensional vector spaces and linear maps of the form
\[
\xymatrix{
{\big( W} \ar@<.5ex>[r]^{u_\rglambda}  & 
{V_\rglambda \big)_{c\in \Sigma}} \ar@<.5ex>[l]^{v_\rglambda}}
\]
such that $T_c:=1-v_c \comp u_c $ is an isomorphism of $V_c $ for $c\in \Sigma $. The morphisms in this category are linear maps compatible with the given linear maps $u_c $, $v_c $. This functor is an equivalence (see \cite[Corollary 4.17]{DHMS}). The following is easy to see. 
\begin{lemma}\label{lem:quivker} 
Given a morphism
 \[
 \xymatrix{
{\big( W} \ar@<.5ex>[r]^{u_c}  \ar[d]_{f} & 
{V_c\big)_{c\in \Sigma}} \ar@<.5ex>[l]^{v_c} \ar@<-2ex>[d]^{f_c} \\
{\big(W'}  \ar@<.5ex>[r]^{u'_\rglambda}  & 
{V'_c\big)_{c\in \Sigma}} \ar@<.5ex>[l]^{v'_c} }
 \]
 \begin{enumerate}
 \item\label{ker} its kernel is the quiver
 $
\xymatrix{
{\big(\ker(f)}  \ar@<.5ex>[r]^{u_c}  & 
{\ker(f_c)\big)_{c\in \Sigma}} \ar@<.5ex>[l]^{v_c}}
 $ 
with the corresponding restrictions of $u_c $, $v_c $, and 
\item\label{coker} its cokernel is the quiver
 $
\xymatrix{
{\big(\coker(f)} \ar@<.5ex>[r]^{u'_c}  & 
{\coker(f_c)\big)_{c\in \Sigma}} \ar@<.5ex>[l]^{v'_c}}
$
with the naturally induced maps $u'_c $, $v'_c $.
\end{enumerate}
\end{lemma}

\begin{lemma}
 The perverse sheaf $F $ has support inside $\Sigma $ if and only if its quiver is of the form
 $
 Q^{(\fra,\frb)}_\Sigma(F)= 
 \xymatrix{
{\big(0} \ar@<.5ex>[r]  & {V_c\big)_{c\in \Sigma}} \ar@<.5ex>[l]}.
 $
\end{lemma}
\begin{proof}
The support of $F $ lies inside $\Sigma $ if and only if the local system $L:=F|_U $ vanishes and equivalently, its nearby cycles vanish.
\end{proof}

Let us denote by $i:\Af^1\smallsetminus \Sigma \hookrightarrow \Af^1 $ the inclusion.
\begin{corollary}\label{cor:middleperv}
 The perverse sheaf $F $ is its own middle extension $F \simeq i_{!\ast} i^\ast F=i_{!\ast} L[1] $ if and only if its quiver is isomorphic to a quiver of the form
 \[
 \xymatrix{
{\big( \Psi(F)} \ar@<.5ex>[r]^(.4){1-T_c}  & 
{\im(1-T_c) \big)_{c\in \Sigma}} \ar@<.5ex>[l]^(.6){\iota_c}}
 \]
 with the monodromy operators $T_c:=1-v_c \comp u_c $ and the canonical inclusions $\iota_c $.
\end{corollary}
\begin{proof}
 The middle extension $i_{!\ast} L[1] $ is characterized by the property that it does neither assume a non-trivial subobject nor quotient supported on $\Sigma $.
 
A subobject of the quiver $Q(F) $ of $F $ supported on $\Sigma $ is of the form
\[
\xymatrix{
{\big( 0}  \ar@<.5ex>[r]  \ar[d]_{f} & 
{V_c \big)_{c\in \Sigma}} \ar@<.5ex>[l] \ar@<-2ex>[d]^{f_c} \\
{\big(\Psi(F)} \ar@<.5ex>[r]^{u_c}  & 
{\Phi_c(F) \big)_{c\in \Sigma}} \ar@<.5ex>[l]^{v_\rglambda} }
\]
with injective $f_c $ (Lemma \ref{lem:quivker} \eqref{ker}). In particular, $f_c(V_c) \subset \ker(v_c) $. In the other direction, any non-trivial subspace of some $\ker(v_c) $ defines such a subobject. Therefore, we see that $Q(F) $ does not admit a non-trivial subobject if and only if all maps $v_c $ are injective.

A quotient of $Q(F) $ supported on $\Sigma $ is of the form
\[
\xymatrix{
{\big( \Psi(F)}  \ar@<.5ex>[r]^{u_c}  \ar[d]_{f} & 
{\Phi_c(F)\big)_{c\in \Sigma}} \ar@<.5ex>[l]^{v_c} \ar@<-2ex>[d]^{f_c} \\
{\big( 0} \ar@<.5ex>[r]  & {V_\rglambda\big)_{c\in \Sigma}} \ar@<.5ex>[l] }
\]
with surjective $f_c $ (Lemma \ref{lem:quivker} \eqref{coker}). In particular, for any $x \in \Psi(F) $, we know that $f_c \comp u_c(x)=0 $, i.e. $\im(u_c) \subset \ker(f_c) $. Therefore, if $u_c $ is surjective, we deduce from the surjectivity of $f_c $ that $V_c=0 $. 

On the other hand side, if one of the $u_c $ is not surjective, we can choose $V_c:=\Phi_c(F)/\im(u_c) $ and get a non-trivial quotient object. Therefore, $Q(F) $ does not admit any non-trivial quotient supported on $\Sigma $ if and only if $u_0 $ and $u_\rglambda $ are surjective.

If we now assume, that $v_c $ are injective and $u_c $ surjective, we have the isomorphism of quivers
\[
 \xymatrix{
{\big(\Psi(F)} \ar@<.5ex>[r]^{u_c}  \ar[d]_{\mathrm{id}}^{\simeq} & 
{\Phi_c(F)\big)_{c\in \Sigma}} \ar@<.5ex>[l]^{v_c} \ar@<-2ex>[d]_{v_c}^{\simeq} \\
 {\big(\Psi(F)} \ar@<.5ex>[r]^(.4){1-T_c}  & 
{\im(1-T_c) \big)_{c\in \Sigma}} \ar@<.5ex>[l]^(.6){\iota_c}}
\]
keeping in mind that $\im(v_c)=\im(v_c \comp u_c)=\im(1-T_c) $.
\end{proof}

\subsection{Application to a regular singular hypergeometric system}

Let $\ugamma=(\gamma_1, \ldots, \gamma_n) $ and $\ueta=(\eta_1, \ldots, \eta_n) $ be a non-resonant pair of parameters. The solutions of the regular singular module $\Hypl{\ugamma}{\ueta}{\rglambda} $ give a perverse sheaf on $\Gm $ and we define
\[
F:= \jint \mathcal{S}ol( \Hypl{\ugamma}{\ueta}{\rglambda} ) \in \Perv_\Sigma(\Af^1) \ ,
\]
a perverse sheaf on $\Af^1 $ with singularities in $\Sigma= \{0, \rglambda \} $. 

\begin{convention}
We fix $\fra \defeq i \cdot \rglambda $ and $\frb \defeq \rglambda^{-1}$. 
\end{convention}
Due to the irreducibility, we know that $F $ is its own middle extension at $\rglambda $. We deduce the following result
\begin{proposition}\label{prop:quiverF}
The quiver of the perverse sheaf $F:= \jint \mathcal{S}ol( \Hypl{\ugamma}{\ueta}{\rglambda} ) $ for non-resonant $\ugamma, \ueta \in \C^n $ is isomorphic to 
\[
Q(F) \simeq \big( 
\xymatrix{
{\im(1-T_0)} \ar@<-.5ex>[r]_(.6){\iota_0} & {\C^n} \ar@<-.5ex>[l]_(.4){1-T_0} \ar@<.5ex>[r]^(.4){1-T_\rglambda}  & 
{\im(1-T_\rglambda)} \ar@<.5ex>[l]^(.6){\iota_\rglambda}} \big)
\]
where $T_c $ is the monodromy of the local system of solutions around $c $ and $\iota_c $ are the inclusions.
\end{proposition}

\subsection{Monodromy of the regular singular hypergeometric system} \label{subsec:mdrmy}

We now recall the results of Beukers-Heckman on the (global) monodoromy of an irreducible regular singular hypergeometric system.

We pick up the notation from the last section. In particular, we denote by $F $ the perverse sheaf on $\Af^1 $ given by the solutions of $\jint \Hypl{\ugamma}{\ueta}{\rglambda} $ for some non-resonant parameters $\ugamma, \ueta \in \C^n $. Let us take a closer look at the (global) monodromy operators
\[
T_c \defeq 1- v_c \comp u_c \in \text{Aut}(\Psi(F)) 
\]
for $c \in \Sigma=\{0, \rglambda \}$ recalling results from \cite[4.2]{DHMS}. To this end, let us fix a base-point $b_c $ in $\ell^\times_c $ and denote by $(c,b_c) $ or $(c,b_c] $ the corresponding part of $\ell_c^\times $. The choice of the pair $(\alpha,\beta) $ fixes an orientation on $\Af^1 $ (here it is counter-clockwise) and hence an isomorphism $\pi_1(B_c \smallsetminus \{ c \}, b_c) \simeq \Z $. We let $L $ denote the associated local system such that $F|_{\Af^1 \smallsetminus \Sigma}=L[1] $. We then have the local topological monodromy operator
\[
T_{cc}^{\text{top}} \in \text{Aut}(L_{b_c})
\]
induced by the positive generator of $\pi_1(B_c \smallsetminus \{ c \}, b_c) \simeq \Z $.

Let $i_{b_c}:\{ b_c\} \hookrightarrow \Af^1 $ be the inclusion. We have a natural isomorphism $\chi_c:i_{b_c}^{-1}F[-1] \isoto \Psi_c(F) $ via the inclusion $(c,b_c) \to \ell_c $ and the short exact sequence
\[
0 \to \field_{(c,b_c)} \to \field_{(c,b_c]} \to \field_{\{ b_c \}} \to 0 .
\]
We have $i_{b_c}^{-1}F[-1] \simeq L_{b_c} $ and this isomorphism intertwines the local topological monodromy $T_{cc}^{\text{top}} $ and $T_{cc}=1-v_{cc} \circ u_{cc} $, see Lemma 4.10 in \cite{DHMS}. In summary, we get
\[
T_c = (\psi_c \circ \chi_c) \circ T_{cc}^{\text{top}} \circ (\psi_c \circ \chi_c)^{-1} 
\]
for $c \in \Sigma=\{0,\rglambda \} $. Let us define
\[
T_\infty := T_0^{-1} \circ T_\rglambda^{-1} \in \text{Aut}(\Psi(F)) 
\]
so that $T_\infty T_\rglambda T_0=1 $. Up to conjugation, this operator corresponds to the local monodromy around $\infty $. 

The fundamental group $\pi_1(\Pj^1 \smallsetminus \{0,\rglambda,\infty\}, b) $ is generated by the homotopy classes $g_c $ of simple loops (counter-clockwise) around the singularities $c=0,\rglambda,\infty $ starting at a base-point $b $ as in the Figure \ref{fig:fundgroup}. We choose the loops such that
\[
g_0 g_\rglambda g_\infty = 1 \in \pi_1(\Pj^1 \smallsetminus \{0,\rglambda,\infty\},b) \ .
\]

\begin{figure}\centering
\begin{tikzpicture}[scale=.7,
	background rectangle/.style={draw=black,dashed,fill=white}, 
	show background rectangle]
\coordinate (C1) at (2,1);
\coordinate (C2) at (.5,0); %vorher (0,
\coordinate (C3) at (-.5,0);  %vorher (.5,
\coordinate (C4) at (-1.3,0);  %vorher (.5,
\coordinate (C5) at (3,0);  %vorher (.5,

\begin{scope}[decoration={
    markings,
    mark=at position .7 with {\arrow{>}}}
    ]
\clip (-1.5,-1.6) rectangle (3.1, 1.8);

\draw[black] (C3) circle (2pt) node[below]{$0$} ;
\draw[black] (C1) circle (2pt) node[below]{$\rglambda$} ;
\filldraw[black] (C2) circle (2pt) node[right]{$b$} ;

\path[draw=black,-,postaction={decorate}] (C2) to[left, loop, out=180-45, in=180+45, min distance=30mm] (C2);
\draw[black] (C3) ++(.5,-.7) node []{$g_0$};

\path[draw=black,-,postaction={decorate}] (C2) to[left, loop, out=0, in=60, min distance=35mm] (C2);
\draw[black] (C1) ++(-.1,-.7) node [below]{$g_\rglambda$};

\draw [black,postaction={decorate}] (C2) 
  .. controls ++(70:2) and ++(90: 2) .. (C5)
 .. controls ++(90:-2) and ++(90:-2) .. (C4)
  .. controls ++(90:1) and ++(115:1) .. (C2);

\draw[black] (C2) ++(.5,-1.2) node[] {$g_\infty $};
 
 \end{scope}
\end{tikzpicture}
\caption{The generators of the fundamental group.}\label{fig:fundgroup}
\end{figure}

If we denote by $h_c $ the monodromy induced by analytic continuation along $g_c $ -- which is of course conjugate to $T_c $ above, the representation map $\pi_1(\Pj^1\smallsetminus \{ 0,\rglambda,\infty\}, b) \to \text{Aut}(\shl_{b}) $, $g_c \mapsto h_c $ is an \emph{anti-}homomorphism if we write the composition in the fundamental group in the ''standard'' way ($g_0g_\rglambda$ meaning $g_0 $ first, $g_\rglambda $ second) -- note that Beukers-Heckmann use the opposite group multiplication in order to have a group homomorphism instead of an anti-homomorphism. In the standard notation, we have the homotopy relation $g_0g_\rglambda g_\infty \simeq 1$, which leads to the monodromy relation $h_\infty \circ h_\rglambda \circ h_0 = \id $.

The eigenvalues of the monodromies around $c=0,\rglambda $ and $\infty $ of the hypergeometric equation $\opHypl{\ugamma}{\ueta}{\rglambda} $ (cp \cite[Corollary 3.2.2]{katz}) are
\begin{align}\label{eq:locmdrmy}
 \eig{T_0} &= \{\exp(2\pi i \gamma_i) \mid i=1, \ldots, n\} \\
\notag \eig{T_\infty} &= \{\exp(-2\pi i \eta_i) \mid i=1, \ldots, n\} \\ 
\notag \eig{T_\rglambda} &= \{ \underbrace{1, \ldots, 1}_{(n-1) \text{ times}} , \exp(2\pi i \lambda)\} \ .
\end{align}
%The last line is of particular interest, since it reflects the fact that there are $(n-1) $ linear independent holomorphic solutions of $\opHypl{\ugamma}{\ueta}{\rglambda} $ around $c=\rglambda $. This is usually referred to as a Theorem of Pochhammer. 

In particular, $T_\rglambda $ is a pseudo-reflection, i.e. $\rank(1-T_\rglambda)=1 $, for generic $\ugamma, \ueta $. Now, there is the following beautiful observation due to Levelt (cp. \cite[Theorem 3.5]{BH}). To facilitate its statement, let us introduce some notations first. For a given $a \in \C^n $, let $\prod_{i=1}^n (X-a_i) = X^n+A_1X^{n-1}+ \ldots +A_n $
denote the coefficients of the polynomials with the given roots. We will use the notation
\begin{equation}\label{eq:companiondef}
\companion{a} \defeq \companion{a_1, \ldots, a_n} \defeq 
\begin{pmatrix}
 0 &&&&& -A_n\\
 1 & 0 &&&& -A_{n-1}\\
 & \ddots &\ddots &&&\vdots \\
 &&&&0 &-A_2 \\
 &&&&1 & -A_1
\end{pmatrix}
\end{equation}
for the corresponding companion matrix. 

\begin{lemma}[Levelt] \label{lemma:Levelt}
Let $V $ be an $n $-dimensional complex vector space and $a,b \in (\C^\times)^n $ be given, such that $a_i \neq b_j $ for all $i,j $. If $(A,B) \in \GL(V) \times \GL(V) $ are given such that
\begin{align*}
& \eig{A}  =\{ a_1, \ldots, a_n \}, \\
 &\eig{B}  =\{b_1, \ldots, b_n \} , \\
&AB^{-1}  \text{ is a pseudo-reflection,}
\end{align*}
then there is a choice of basis $\varphi:V \isoto \C^n $ such that
\begin{align*}
\varphi \circ A \circ \varphi^{-1}  &= \companion{a_1, \ldots, a_n} \in \GL_n(\C) \text{ and} \\
\varphi \circ B \circ \varphi^{-1} &= \companion{b_1,\ldots, b_n} \in \GL_n(\C) \ .
\end{align*}
\end{lemma}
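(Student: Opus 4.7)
The strategy is to produce a single vector $v\in V$ that is cyclic for $B$ and simultaneously satisfies $v,Bv,\dots,B^{n-2}v\in K\defeq\ker(A-B)$. Once such a $v$ is in hand, the ordered basis $e_i\defeq B^{i-1}v$ ($i=1,\dots,n$) puts $B$ in companion form $\companion{b_1,\dots,b_n}$ by cyclicity, while the relations $Ae_i = AB^{i-1}v = B^{i}v = e_{i+1}$ for $i=1,\dots,n-1$ --- valid because $B^{i-1}v\in K$ --- force the matrix of $A$ in this basis to agree with a companion matrix everywhere except possibly in its last column. That last column is then uniquely determined by the characteristic polynomial $\prod(X-a_i)$, yielding $A=\companion{a_1,\dots,a_n}$.

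Before constructing $v$, I would set up two preliminaries. First, the pseudo-reflection hypothesis translates to $\mathrm{rank}(A-B)=1$ via $A-B=(AB^{-1}-\mathbb{1})B$, so $K$ is a hyperplane; fix $\psi\in V^*$ with $\ker\psi=K$. Second, $B$ must admit a cyclic vector: if some eigenspace $E_\mu$ of $B$ had dimension at least $2$, then $E_\mu\cap K$ would be nontrivial for dimension reasons, and any nonzero $w\in E_\mu\cap K$ would satisfy $Aw=Bw=\mu w$, making $\mu$ a common eigenvalue of $A$ and $B$ and contradicting the disjointness $a_i\neq b_j$. (By symmetry the same is true for $A$.)

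The heart of the argument --- and the step I expect to be the main obstacle --- is the construction of $v$ itself. The linear conditions $\psi(v)=\psi(Bv)=\cdots=\psi(B^{n-2}v)=0$ cut out a subspace of $V$ of dimension at least $1$, so nonzero candidates exist; what has to be verified is that every such nonzero candidate is automatically cyclic for $B$. The plan for this verification: if $v$ is not cyclic, the smallest $B$-invariant subspace $\tilde W$ containing $v$ has dimension $k<n$, so $B^{n-1}v\in\mathrm{span}(v,Bv,\dots,B^{k-1}v)\subseteq\mathrm{span}(v,Bv,\dots,B^{n-2}v)$, which already lies in $K$; applying $\psi$ then gives $\psi(B^{n-1}v)=0$, and Cayley--Hamilton propagates this to $\psi(B^iv)=0$ for all $i\ge 0$, so $\tilde W\subseteq K$. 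But on $K$ we have $A=B$, so $\tilde W$ is $A$-invariant and $A|_{\tilde W}=B|_{\tilde W}$, which would force $A$ and $B$ to share an eigenvalue on $\tilde W$ --- contradicting the disjointness assumption unless $\tilde W=0$, i.e.\ $v=0$. Thus any nonzero $v$ satisfying the $\psi$-conditions is cyclic, and the companion-form description then follows from the mechanism described in the first paragraph.
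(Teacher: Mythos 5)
Your proof is correct and follows essentially the same route as the paper (Levelt's argument as reproduced from Beukers--Heckman): you intersect $K=\ker(A-B)$ with its preimages under powers of $B$ to find the cyclic vector, whereas the paper uses preimages under $A$, and the common-eigenvalue contradiction is identical in both. If anything your write-up is slightly more complete, since you explicitly verify that the resulting $v,Bv,\dots,B^{n-1}v$ is a basis (the paper asserts this without comment after showing $\dim U=1$); the preliminary observation that $B$ is nonderogatory is unnecessary for the argument, but harmless.
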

\begin{proof} 
see \cite[Theorem 3.5]{BH}.
\end{proof}
%For readability reasons, we repeat the proof from \cite{BH}. 
%
%Note first, that due to the assumptions, $\companion{a} \circ \companion{b}^{-1} $ is a pseudo-reflection since
%\[
%\rank(1-\companion{a}\companion{b}^{-1})=\rank((\companion{b}-\companion{a})=1 .
%\]
%Now, consider the $(n-1) $-dimensional subspace $W\defeq \ker(A-B) $. Define
%\[
%U:= W \cap A^{-1} W \cap \ldots \cap A^{-(n-2)} W \ ,
%\]
%with $\dim(U) \ge 1 $. Assuming $\dim(U)>1 $, we find a vector $0 \neq v \in U \cap A^{-(n-1)}W $ and deduce that $A^i v \in W $ for all $i=0, 1, \ldots, n-1 $. Therefore $W $ contains the $A $-stable subspace
%\[
%0 \neq \langle A^i v \mid i \in \Z \rangle \subset W 
%\]
%and in particular, there is an eigenvector of $A $ inside $W $. Since $A|_W=B|_W $, this is also an eigenvector of $B $ with the same eigenvalue in contradiction to the assumptions on $a,b $. 
%
%Hence, $\dim(U)=1 $, say $U= \C \cdot v $. Then $A^i v=B^i v $ for all $i=0, \ldots, n-2 $ and if we take $v, Av, \ldots, A^{n-1}v $ as our basis for $V $, the linear maps $A, B $ are represented in this basis by the companion matrices as desired.
%\end{proof}
 
We apply this Lemma to $A \defeq T_\infty^{-1} $ and $B \defeq  T_0 $. Then $T_\rglambda=T_\infty^{-1} \circ T_0^{-1}=AB^{-1} $ is a pseudo-reflection.

\begin{corollary}\label{cor:mdrmy}
 Let $\ugamma,\ueta $ be non-resonant and $F $ be the perverse sheaf $F\defeq \jint \mathcal{S}ol( \Hypl{\ugamma}{\ueta}{\rglambda} ) \in \Perv_\Sigma(\Af^1) $. Then there is an isomorphism $\varphi: \Psi(F) \isoto \C^n $ such that
\begin{align*}
 \varphi \circ T_0 \circ \varphi^{-1} & = \companion{\exp(2\pi i \ugamma)}
 \\
 \varphi \circ T_\rglambda \circ \varphi^{-1} & =  \companion{\exp(2\pi i \ueta)} \circ \companion{\exp(2\pi i \ugamma)}^{-1}
\end{align*} 
\end{corollary}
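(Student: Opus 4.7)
The plan is to reduce directly to Levelt's Lemma applied to the pair $(A,B) = (T_\infty^{-1}, T_0)$ acting on the $n$-dimensional space $\Psi(H)$. From the monodromy relation $T_\infty T_\rglambda T_0 = \id$ recorded in the previous subsection, we have $T_\rglambda = T_\infty^{-1} T_0^{-1} = A B^{-1}$. So if the hypotheses of Lemma \ref{lemma:Levelt} are satisfied, the resulting basis $\varphi$ will simultaneously put $T_0$ in companion form $C(\exp(2\pi i \ugamma))$ and present $T_\rglambda$ as $C(\exp(2\pi i \ueta)) \cdot C(\exp(2\pi i \ugamma))^{-1}$, which is exactly the desired conclusion.

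To invoke Levelt, I need to check three things. First, the eigenvalue sets: by the list of local exponents \eqref{eq:locmdrmy}, $B = T_0$ has eigenvalues $\{\exp(2\pi i \gamma_j)\}$ and $A = T_\infty^{-1}$ has eigenvalues $\{\exp(2\pi i \eta_j)\}$. Second, these two sets are disjoint: this is precisely the non-resonance hypothesis $\gamma_i - \eta_j \notin \Z$. Third, $AB^{-1} = T_\rglambda$ must be a pseudo-reflection. From \eqref{eq:locmdrmy} the eigenvalues of $T_\rglambda$ are $1$ (with multiplicity $n-1$) and $\exp(2\pi i \lambda)$, so $\rank(1-T_\rglambda) \le 1$; to exclude the degenerate case $T_\rglambda = \id$ one uses that non-resonance together with irreducibility of the middle-extension forces $\exp(2\pi i \lambda) \neq 1$, equivalently the local system must have nontrivial monodromy around $\rglambda$ (otherwise the perverse sheaf would not be $j_{!*}$-irreducible of full rank $n$).

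With the hypotheses verified, Levelt's Lemma produces an isomorphism $\varphi:\Psi(H) \isoto \C^n$ realizing $\varphi A \varphi^{-1} = C(\exp(2\pi i \ueta))$ and $\varphi B \varphi^{-1} = C(\exp(2\pi i \ugamma))$. The first of these is the claim for $T_0$. For $T_\rglambda$, simply compute
\[
\varphi T_\rglambda \varphi^{-1} = \varphi A \varphi^{-1} \cdot (\varphi B \varphi^{-1})^{-1} = C(\exp(2\pi i \ueta)) \circ C(\exp(2\pi i \ugamma))^{-1},
\]
matching the statement.

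The main obstacle is the last verification above: confirming that the pseudo-reflection really has rank one and not zero, i.e., $\lambda \notin \Z$. This does not follow syntactically from the pointwise non-resonance $\gamma_i - \eta_j \notin \Z$, and requires the observation that $H = j_{!*} \mathcal{S}ol(\mathcal{H}yp)$ is the middle extension of an irreducible local system with $\dim \Psi(H) = n$; if $\exp(2\pi i \lambda)$ were $1$, the rank of $H$ would drop. Everything else is a direct translation through Levelt's Lemma and the relation $T_\infty T_\rglambda T_0 = \id$.
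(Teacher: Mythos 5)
Your overall strategy matches the paper exactly: apply Levelt's Lemma to $(A,B)=(T_\infty^{-1},T_0)$, read off $T_0=C(\exp(2\pi i\ugamma))$ and $T_\rglambda=AB^{-1}=C(\exp(2\pi i\ueta))\cdot C(\exp(2\pi i\ugamma))^{-1}$. The eigenvalue identifications and the use of non-resonance to see $\eig(T_\infty^{-1})\cap\eig(T_0)=\emptyset$ are correct.

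The gap is in your verification that $T_\rglambda$ is a pseudo-reflection, and it is twofold. First, the implication ``eigenvalues are $1$ with multiplicity $n-1$ and $e^{2\pi i\lambda}$, hence $\rank(1-T_\rglambda)\le 1$'' is false: the list \eqref{eq:locmdrmy} records \emph{algebraic} multiplicities, and a Jordan block of size $\ge 2$ for the eigenvalue $1$ would give $\rank(1-T_\rglambda)\ge 2$. The bound $\rank(1-T_\rglambda)\le 1$ is not a consequence of the characteristic polynomial; it is exactly the content of the Theorem of Pochhammer invoked in the paper (there exist $n-1$ linearly independent \emph{holomorphic} solutions at $\rglambda$, so $\dim\ker(1-T_\rglambda)\ge n-1$), and that input cannot be skipped. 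Second, your proposed criterion for excluding $T_\rglambda=\id$, namely $e^{2\pi i\lambda}\neq 1$, is neither implied by non-resonance nor the right thing to check. For instance with $n=2$, $\ugamma=(1/4,3/4)$, $\ueta=(1/8,7/8)$ one has $\gamma_i-\eta_j\notin\Z$ for all $i,j$ yet $\lambda=1\in\Z$; the monodromy $T_\rglambda$ there is a nontrivial unipotent transvection, still a pseudo-reflection with $\rank(1-T_\rglambda)=1$. The correct way to rule out $T_\rglambda=\id$ is direct: if $T_\infty^{-1}T_0^{-1}=\id$ then $T_\infty^{-1}=T_0$, forcing $\{\exp(2\pi i\eta_i)\}=\{\exp(2\pi i\gamma_j)\}$ as multisets, which contradicts non-resonance. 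Combining this with Pochhammer's bound $\rank(1-T_\rglambda)\le 1$ gives $\rank(1-T_\rglambda)=1$, and the rest of your argument then goes through as written.
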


Let us introduce the following notation for a given $\ugamma $:
\begin{equation}\label{eq:Cgamma}
\Co{\ugamma} := \companion{\exp(2\pi i \ugamma)} \ .
\end{equation}
If we denote by
\begin{align}\label{eq:defchiC}
\chi_C(X) \defeq \prod_{i=1}^n (X-\exp(2\pi i\gamma_i)) &= X^n + C_1X^{n-1} + \ldots + C_n \\
\chi_E(X) \defeq \prod_{i=1}^n (X-\exp(2\pi i\eta_i)) &= X^n+ E_1X^{n-1}+ \ldots + E_n \label{eq:defchiE}
\end{align}
the coefficients of the polynomials, we get
\[
1-\Co{\ueta}\Co{\ugamma}^{-1}=
\left(\begin{array}{c|ccccc}
\begin{array}{c}
(C_{n}-E_{n})C_n^{-1} \\
%(C_{n-1}-E_{n-1})C_n^{-1} \\
%(C_{n-2}-E_{n-2})C_n^{-1} \\
\vdots \\
(C_1-E_1)C_n^{-1} 
\end{array}
&&& \mbox{$\text{\Large $0$}_{n\times (n-1)}$} &&
\end{array}\right)
\]

\subsection{Explicit presentation of the quiver}

Summing up the results of the previous subsections, we obtain the following.

\begin{proposition}\label{prop:quiverR}
For non-resonant $(\ugamma, \ueta) $,  the perverse sheaf of solutions of $\Hypl{\ugamma}{\ueta}{\rglambda} $ is (up to isomorphism of quivers) given by
\begin{equation}\label{eq:quivHyp1}
\xymatrix@!C{
 {\im(1-\Co{\ugamma})} \ar@<-.5ex>[r]_(.6){\iota_0}  & {\C^n} \ar@<-.5ex>[l]_(.4){1-\Co{\ugamma}} \ar@<.5ex>[r]^(.4){1-\Co{\ueta}\Co{\ugamma}^{-1}}  & 
{\im(1-\Co{\ueta}\Co{\ugamma}^{-1})} \ar@<.5ex>[l]^(.6){\iota_\rglambda}
}
\end{equation}
\end{proposition}

We want to give two explicit representatives of the isomorphism class of this quiver which we call the \emph{companion representative} and the \emph{Jordan representative} respectively.

\begin{assumption}\label{ass:1EW}
 We assume that $1 $ is an eigenvalue of $\Co{\ugamma} $, and we let $\gamma_n=1 $.  For the coefficients of the characteristic polynomial, this yields
\begin{equation}\label{eq:sumC}
 \sum_{i=1}^n C_i +1 = 0 \ .
\end{equation}
\end{assumption}
In the following, we will make this assumption (which will be satisfied in our application).

\subsubsection{The companion representative}

Since
\[
1-\Co{\ugamma} =
\begin{pmatrix}
 1 &&&&& C_n\\
 -1 & 1 &&&& C_{n-1}\\
 && \ddots &\ddots &&\vdots \\
 &&&&1 &C_2 \\
 &&&&-1 & 1+C_1
\end{pmatrix}
\]
we obtain under Assumption \ref{ass:1EW} that $\im(1-\Co{\ugamma}) = \{ x \in \C^n \mid \sum_{i=1}^n x_i=0 \} $.
We choose the basis $(e_1-e_2, e_2-e_3, \ldots, e_{n-1}-e_n) $ (with $e_i $ the standard basis vector of $\C^n $) and denote by $\vi_0: \C^{n-1} \isoto \im(1-\Co{\ugamma}) $ the corresponding isomorphism sending the standard basis to the latter. 

For the vanishing cycles at $1 $, we see that $\im(1-\Co{\ueta}\Co{\ugamma}^{-1})=\im(\Co{\gamma}-\Co{\ueta}) $ and we fix the isomorphism
\begin{equation} \label{eq:vi1}
\vi_\rglambda:\C \isoto \im(\Co{\ugamma}-\Co{\ueta}) \ , \ 1 \mapsto {}^t(E_n-C_n, E_{n-1}-C_{n-1}, \ldots, E_1-C_1) \ .
\end{equation}
Elementary computations give the following.
\begin{proposition}\label{ref:propcompanion}
 Under Assumption \ref{ass:1EW}, we have the following isomorphism of quivers -- the first line being the quiver of \eqref{eq:quivHyp1}:
\[
\xymatrix@!C{
 {\im(1-\Co{\ugamma})} \ar@<-.5ex>[r]_(.6){\iota_0}  & {\C^n} \ar@<-.5ex>[l]_(.4){1-\Co{\ugamma}} \ar@<.5ex>[r]^(.4){1-\Co{\ueta}\Co{\ugamma}^{-1}}  & 
{\im(\Co{\ugamma}-\Co{\ueta})} \ar@<.5ex>[l]^(.6){\iota_\rglambda} \\
{\C^{n-1}} \ar@<-.5ex>[r]_(.6){V'_0} \ar[u]^{\simeq}_{\vi_0} & {\C^n} \ar@<-.5ex>[l]_(.4){U_0'} \ar[u]^{\simeq}_{\mathrm{id}} \ar@<.5ex>[r]^(.4){U_\rglambda'}  & 
{\C} \ar@<.5ex>[l]^(.6){V'_\rglambda} \ar[u]^{\simeq}_{\vi_\rglambda}
} 
\]
with
\begin{align*}
U_0'  & \defeq 
\left(\begin{array}{c|l}
1_{n-1} &
\begin{array}{l}
C_n \\
C_n+C_{n-1}\\
\vdots \\
C_n+C_{n-1}+ \ldots + C_2
\end{array}
\end{array}\right) \ &, \ 
V_0' & \defeq 
\begin{pmatrix}
 1\\
 -1&1\\
 &\ddots & \ddots\\
 &&-1 &1\\
&&&-1
\end{pmatrix} \ , \\
U_\rglambda' & \defeq ( -C_n^{-1}, 0, \ldots, 0) \ &, \ V_\rglambda' &\defeq {}^t(E_n-C_n, \ldots, E_1-C_1) .
\end{align*}
\end{proposition}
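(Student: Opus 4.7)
The plan is to verify the claimed isomorphism of quivers by direct matrix bookkeeping, after first confirming that $\vi_0$ and $\vi_\rglambda$ are well-defined isomorphisms onto the stated image spaces. For $\vi_0$: summing the rows of the explicit form of $1-\Co{\ugamma}$ displayed just before the statement yields $\sum_i ((1-\Co{\ugamma})x)_i = (1+\sum_k C_k)\,x_n$, which vanishes identically precisely under Assumption~\ref{ass:1EW}; hence $\im(1-\Co{\ugamma})$ is contained in (and by a rank count equal to) the hyperplane $\{x\in\C^n\mid \sum_i x_i=0\}$, of which $(e_j-e_{j+1})_{j=1}^{n-1}$ is a basis, so $\vi_0$ is an isomorphism. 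For $\vi_\rglambda$: $\Co{\ueta}$ and $\Co{\ugamma}$ agree in all columns except the last, so $\Co{\ugamma}-\Co{\ueta} = v\cdot e_n^T$ with $v={}^t(E_n-C_n,\ldots,E_1-C_1)$, which is non-zero under non-resonance (otherwise $\chi_C=\chi_E$), so $\im(\Co{\ugamma}-\Co{\ueta})=\C\,v$ and $\vi_\rglambda$ is an isomorphism.

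Next I would verify the four commuting squares in turn. Commutativity of the two $v$-squares ($\iota_0\circ\vi_0 = V_0'$ and $\iota_\rglambda\circ\vi_\rglambda = V_\rglambda'$) is immediate from the definitions of $\vi_0$ and $\vi_\rglambda$ on the standard bases. For the $u$-square on the left, $\vi_0\circ U_0' = 1-\Co{\ugamma}$, I would write out the telescoping expression $\vi_0(y_1,\ldots,y_{n-1}) = {}^t(y_1,\,y_2-y_1,\,\ldots,\,y_{n-1}-y_{n-2},\,-y_{n-1})$ and compare, coordinate by coordinate, with the explicit matrix of $1-\Co{\ugamma}$ written down above the statement. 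All coordinates match term-by-term except the last, whose equality reduces exactly to $-(C_2+\cdots+C_n)=1+C_1$, i.e.\ to~\eqref{eq:sumC}; this is the only nontrivial use of Assumption~\ref{ass:1EW} in the commutativity check.

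The remaining $u$-square, $\vi_\rglambda\circ U_\rglambda' = 1-\Co{\ueta}\Co{\ugamma}^{-1}$, is the main (still short) calculation. Factoring $1-\Co{\ueta}\Co{\ugamma}^{-1} = (\Co{\ugamma}-\Co{\ueta})\,\Co{\ugamma}^{-1} = v\cdot (e_n^T\Co{\ugamma}^{-1})$ reduces the identity to computing the last row of $\Co{\ugamma}^{-1}$. The companion structure $\Co{\ugamma}\,e_k = e_{k+1}$ for $k<n$ forces $\Co{\ugamma}^{-1}\,e_k = e_{k-1}$ for $k\ge 2$, so the entries of the last row of $\Co{\ugamma}^{-1}$ in columns $2,\ldots,n$ all vanish; solving $\Co{\ugamma}\,x = e_1$ using the first row of $\Co{\ugamma}$ gives $x_n = -1/C_n$. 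Hence $e_n^T\Co{\ugamma}^{-1} = (-C_n^{-1},0,\ldots,0) = U_\rglambda'$, and multiplying by $v$ from the left reproduces exactly the displayed matrix of $1-\Co{\ueta}\Co{\ugamma}^{-1}$ boxed before the statement, proving the square. The only obstacle worth mentioning is bookkeeping — keeping the sign conventions for $\vi_0$, the reverse-indexing of the coefficients $C_i$, and the orientation of the companion matrices mutually consistent throughout — and no conceptual difficulty arises beyond the single application of \eqref{eq:sumC} in the left $u$-square.
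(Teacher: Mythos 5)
Your verification is correct and amounts to precisely the ``elementary computations'' the paper refers to without spelling out: you check that $\vi_0$ and $\vi_\rglambda$ land isomorphically onto the stated image subspaces, and then check the four commuting squares coordinate by coordinate, with the rank-one factorisation $1-\Co{\ueta}\Co{\ugamma}^{-1} = v\,(e_n^T\Co{\ugamma}^{-1})$ handling the right-hand $u$-square cleanly. All the individual computations check out (the telescoping identification of $\vi_0\circ U_0'$ with $1-\Co{\ugamma}$, the reduction of the last coordinate to \eqref{eq:sumC}, and the identification of $e_n^T\Co{\ugamma}^{-1}$ with $(-C_n^{-1},0,\dots,0)$ via the shift structure of the companion matrix), so the proposal is a correct proof following essentially the same route the paper intends.
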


\begin{remark}\label{rema:comp}
 \begin{enumerate}
 \item By Assumption \ref{ass:1EW}, we have $C_1=-1-(C_n+\ldots+C_2) $, so no information is lost in $U_0' $.
\item  \label{rema:diag}
In order to better understand the entries of $U_0' $, let us observe that under Assumption \ref{ass:1EW}, we have
\begin{multline*}
X^n+C_1X^{n-1} + \ldots + C_{n-1}X+ C_n =\\
 (X-1) \cdot \big( X^{n-1}-(C_n+\ldots+C_2) X^{n-2} - (C_n+ \ldots+C_3)X^{n-3} + \ldots - C_n \big) 
\end{multline*}
hence the second factor equals $\prod_{j=1}^{n-1} (X-\exp(2 \pi i \gamma_j)) $ and we have $1- U_0'V_0' = 
%\left(
%\begin{array}{ccccl}
%0 & &&& C_n \\
%1 &0 &&& C_n+C_{n-1} \\
%&\ddots &&& \vdots \\
%&&&1& C_n+ \ldots + C_2
%\end{array}\right) =
 \Co{\ugamma'} \in \GL_{n-1}(\C)
$ where we write $\ugamma=(\ugamma',1) $. 
\end{enumerate}
\end{remark}

\subsubsection{The Jordan representative}\label{sec:Jordanquiver}

We want to write the quiver in terms of the Jordan normal form of $\Co{\ugamma} $. Note that the parameters $\gamma_1, \ldots, \gamma_{n-1},1 $ are not necessarily pairwise distinct. We will write $\{\lambda_1, \ldots,  \lambda_\ellJ \} $ for the pariwise distinct eigenvalues with $\lambda_\ellJ=1 $ and order the exponents $\ugamma $ accordingly:
\begin{align}\label{eq:lambdamult}
\lambda_1 & \defeq \exp(2\pi i\gamma_1) = \ldots = \exp(2\pi i\gamma_{k_1}) \\ \notag
\lambda_2 & \defeq \exp(2\pi i\gamma_{k_1+1}) = \ldots = \exp(2\pi i\gamma_{k_1+k_2}) \\ \notag
\vdots & \hspace*{2cm}\vdots \\ \notag
1=\lambda_\ellJ & \defeq \exp(2\pi i\gamma_{n-k_\ellJ}) = \ldots = \exp(2\pi i\gamma_n) .
\end{align}
Hence, $k_j $ is the algebraic multiplicity of the eigenvalue $\lambda_j $.

\begin{remark}
For each individual eigenvalue $\lambda_j $, the eigenspace of the companion matrix is one-dimensional: $\dim\ker(\Co{\ugamma} - \lambda_j \cdot \id) = 1 $. We will write $\Jblock{\lambda_j} $ for the corresponding Jordan block
\[
\Jblock{\lambda_j} \defeq 
\begin{pmatrix}
 \lambda_j & 1  \\
% & \lambda_j & 1\\
 & \ddots & \ddots \\
 && \lambda_j & 1 \\
 &&&\lambda_j
\end{pmatrix} \in \mathrm{End}(\C^{k_j})
\]
of size $k_j \times k_j $.
\end{remark}

%Due to Remark \ref{rema:comp} (\ref{rema:diag}), we know that, $\ugamma' $ being generic, the map $1-U_0'V_0' \in \GL_{n-1}(\C) $ is diagonalizable with eigenvalues $\exp(2\pi i \gamma_1), \ldots, \exp(2 \pi i \gamma_{n-1}) $. Since usually in the literature, the Stokes matrices are given with diagonal blocks in the diagonal, we want to give another representative of the isomorphism class of the quiver \eqref{eq:quivHyp1} leading to such a form of Stokes matrices in the following section.

Let $H \in \GL_n(\C) $ be a base change such that
\begin{equation}\label{eq:HCH}
H \cdot \Co{\ugamma} \cdot H^{-1} = \Jordan,
\end{equation}
where $J $ is the Jordan normal form consisting of the Jordan blocks $\Jblock{\lambda_1}, \ldots, \Jblock{\lambda_\ellJ} $.

There is some ambiguity for $H$ and we want to make an explicit choice in the following. There are some natural ways to construct a Jordan basis either 
\begin{enumerate}
 \item as the columns of $H^{-1} $ regarding the multiplication by $\Co{\ugamma} $ from the left or
 \item as the rows of $H $ regarding the multiplication by $\Co{\ugamma} $ from the right.
\end{enumerate}

Recall that $\chi_C(X)=X^n+C_1X^{n-1} + \ldots + C_{n-1}X+C_n $ is the characteristic polynomial of $\Co{\ugamma} $. Now, the vector 
\[
\col{j}:=
\begin{pmatrix}
C_{n-1}+C_{n-2}\lambda_j + C_{n-3}\lambda_j^2+ \ldots + C_1 \lambda_j^{n-2}+\lambda_j^{n-1} \\
 \vdots\\
 C_2+C_1 \lambda_j+\lambda_j^2\\
 C_1+\lambda_j\\
 1
\end{pmatrix} \in \C^n
\]
is an eigenvector as in (1). 

We will write 
\[
c_j^{(k)} \defeq ( \frac{\partial}{\partial \, \lambda_j})^k \, c_j
\]
for the $k^{\text{th}} $ derivative with respect to $\lambda_j $. It is easily observed that the $k_j $ columns give an $n \times k_j $-matrix
\[
\K_j \defeq \left(
\begin{array}{c|c|c|c|c}
&&&&\\
c_j & c_j^{(1)} & \frac{1}{2!} c_j^{(2)} & \ldots & \frac{1}{(k_j-1)!} c_j^{(k_j-1)}\\
&&&&
\end{array}\right )  \in \C^{n \times k_j} 
\]
satisfying
\begin{equation}\label{eq:KJ}
\Co{\ugamma} \cdot \K_j = \K_j \cdot \Jblock{\lambda_j}.
\end{equation}

Hence
\[
K^{-1} \defeq 
\left(
\begin{array}{c|c|c|c}
&&&\\
 \K_1 & \K_2 & \cdots & \K_\ellJ\\
&&&
\end{array}\right) \in \GL_n(\C)
\] 
is a possible choice for \eqref{eq:HCH} writing $K $ instead of $H $.

Considering the eigenvector problem for the multiplication by $\Co{\ugamma} $ from the right, we see that $r_j:= \big( 1, \lambda_j, \lambda_j^2, \ldots, \lambda_j^{n-1} \big) $ is an eigenvector for the eigenvalue $\lambda_j=\exp(2 \pi i \gamma_j) $. 

As before, writing
\[
r_j^{(k)} \defeq (\frac{\partial}{\partial \lambda_j})^k \, r_j \in (\C^n)^\ast
\]
for the row vector, we obtain a $k_j \times n $-matrix
\[
\Hm_j \defeq 
\left( \begin{array}{ccc}
& \frac{1}{(k_j-1)!} r_j^{(k_j-1)} & \\
& \vdots & \\
  & \frac{1}{(2)!} r_j^{(2)} &  \\
   & r_j^{(1)} &  \\
    & r_j & 
\end{array} \right) \in \C^{k_j \times n}
\]
satisfying
\begin{equation}\label{eq:HJ}
 H_j \cdot \Co{\ugamma} = \Jblock{\lambda_j} \cdot H_j .
\end{equation}
Therefore, we get another solution for \eqref{eq:HCH}, namely the generalized Vandermonde type matrix which decomposes vertically into blocks of size $k_1\times n, \ldots, k_\ellJ \times n $ as
\begin{equation}\label{eq:HvdM}
H:=
\left(
\begin{array}{ccc}
&H_1& \\ \hline
&H_2& \\ \hline
&\vdots &\\ \hline
& H_\ellJ
\end{array}\right)
\end{equation}
We will choose this isomorphism in the following. Note that
\begin{enumerate}
 \item the last line of $H $ is the row $r_\ellJ $ for the eigenvalue $\lambda_\ellJ=1 $, hence the last line of $H $ is $(1,1,\ldots, 1)$.
 \item the last line of $K^{-1} $ decomposes into blocks of size $k_1, k_2, \ldots, k_\ellJ $ -- the algebraic multiplicities of the eigenvalues -- and then reads
 \begin{equation}\label{eq:lastrowK}
 \text{last line of $K^{-1} $}= (1,0, \ldots, 0 \mid 1, 0, \ldots, 0 \mid \cdots \mid 1,0, \ldots, 0).
 \end{equation}
\end{enumerate}
Since both $H $ and $K $ satisfy \eqref{eq:HCH}, we conclude that $A\defeq H \cdot K^{-1} $ commutes with the Jordan matrix $J $. Consequently, $A $ also decomposes into blocks
\[
\def\arraystretch{1.2}
A= 
\left(
\begin{array}{ccc}
\boxed{
\begin{array}{c}
 A_1
\end{array}} \\
& \ddots & \\
& &
\boxed{
\begin{array}{c}
 A_\ellJ
\end{array}}
\end{array}\right)
\]
of blocks $A_j $ of size $k_j \times k_j $ which have the form
\begin{equation}\label{eq:Aj}
A_j = 
\left(
\begin{array}{cccccc}
a_{j,0} & a_{j,1} & a_{j,2} & \cdots & a_{j, k_j-1} \\
& a_{j,0} & a_{j,1} &  \cdots & a_{j, k_j-2} \\
&& a_{j,0}  & \cdots & a_{j, k_j-3} \\ 
&&& \ddots & \vdots \\
&&&& a_{j,0}
\end{array}\right) = \sum_{k=0}^\infty a_{j,k} \cdot N^k
\end{equation}
for the nilpotent matrix $N $ being the standard Jordan block  of size $k_j\times k_j $ with $0 $ in the diagonal and $1 $ in the first super-diagonal.

The entries $a_{j,k} $ for $0 \le k \le k_j-1 $ are computed as
\[
 a_{j,k}=\frac{1}{k! (k_j-1)!} r_j^{(k)} \cdot c_j^{(k_j-1)}.
\]
Note that $c_j=C \cdot \sigma_j $ with 
\[
C  \defeq
\begin{pmatrix}
 1& C_1 &C_2& \cdots & C_{n-1} \\
 &1&C_1& \cdots & C_{n-2} \\
 && \ddots& \ddots\\
 &&&&1
\end{pmatrix} \ , \ 
\sigma_j  \defeq 
\begin{pmatrix}
 \lambda^{n-1}\\\vdots \\ \lambda \\ 1
\end{pmatrix}
\]
Now, for any $0 \le a \le b \le k_j-1 $, we have
\[
\frac{1}{a!} \cdot r_j^{(a)}=(0, \ldots, 0, 1, {a+1 \choose 1} \lambda, {a+2 \choose 2} \lambda^2, \ldots, {n-1 \choose n-1-a} \lambda^{n-1-a})
\]
with $a $ zeroes in front, and 
\[
\frac{1}{b!} \cdot \sigma_j^{(b)}= {}^t ({n-1 \choose n-1-b} \lambda^{n-1-b}, {n-2 \choose n-2-b} \lambda^{n-2-b}, \ldots, {b+1 \choose 1} \lambda, 1, 0, \ldots, 0 )
\]
with $b $ zeroes in the end. A tedious but easy calculation shows that
\begin{multline*}
 \frac{1}{a!b!} r_j^{(a)} c_j^{(b)} =
 \frac{1}{a!b!} r_j^{(a)} \cdot C \cdot \sigma_j^{(b)} = \\
M_{0} \lambda_j^{n-a-b-1}+ M_{1} C_1 \lambda_j^{n-a-b-2} + \ldots + M_{n-1-a-b} C_{n-1-a-b}
\end{multline*}
with
\[
M_{\kappa} \defeq 
\sum_{\ellJ=a}^{n-b-1-\kappa} {\ellJ \choose a}\cdot {n-1-\kappa-\ellJ \choose b} = {n-\kappa \choose a+b+1},
\]
the last equality is usually known as the Chu-Vandermonde-formula. Hence, we obtain
\begin{multline*}\label{eq:rcab}
\frac{1}{a! \cdot b!} r_j^{(a)} c_j^{(b)} = \\
{n \choose a+b+1} \lambda_j^{n-(a+b+1)} + {n-1 \choose a+b+1} C_1 \lambda_j^{n-1-(a+b+1)} + \ldots + C_{n-1-(a+b+1)} = \\
 \frac{1}{(a+b+1)!} \cdot \chi_C^{(a+b+1)}(\lambda_j)
\end{multline*}
and deduce the formula for the coefficients of the blocks $A_j $ in \eqref{eq:Aj}:
\begin{equation}\label{eq:ajk}
a_{j,k}= \frac{1}{(k_j+k)!} \cdot \chi_C^{(k_j+k)}(\lambda_j) .
\end{equation}
Note that it is zero when $k_j+k \ge n+1 $. 

Let us use the following notation. For each $\lambda_j $, we write $\chi_C(X)= (X-\lambda_j)^{k_j} \cdot \chi_j(X) $, i.e.
\begin{equation} \label{eq:chijdef}
\chi_j(X) \defeq \prod_{\ellJ \neq j} (X-\lambda_\ellJ)^{k_\ellJ} \ .
\end{equation}
Then
\begin{equation}\label{eq:aijchi}
 a_{j,k} = \frac{1}{k!} \chi_j^{(k)}(\lambda_j) \ .
\end{equation}

For later purposes, we compute the blocks $A_j^{-1} $ of the inverse matrix $A^{-1} $ using the following lemma which is easily proved by multiplying the two matrices.
\begin{lemma}
 Let $M $ be an $k \times k $-matrix of the form $M=\sum_{\nu=0}^\infty \frac{1}{\nu!} \frac{d^\nu}{dX^\nu} a(X) \cdot N^\nu $ for some polynomial $a(X) \in \C[X] $. Then the inverse matrix is given by
 \[
 M^{-1} = \sum_{\nu=0}^\infty \frac{1}{\nu!} \cdot \frac{d^\nu}{dX^\nu} (a(X)^{-1}) \cdot N^\nu \ .
 \]
\end{lemma}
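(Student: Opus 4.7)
The plan is to verify the claim by direct multiplication, invoking Leibniz's rule. Set
\[
B \defeq \sum_{\nu=0}^\infty \frac{1}{\nu!} \cdot \frac{d^\nu}{dX^\nu}\bigl(a(X)^{-1}\bigr) \cdot N^\nu
\]
for the conjectural inverse and consider the product $A \cdot B$. Since $N$ is the nilpotent Jordan block of size $m \times m$ with $N^m = 0$, both sums are finite and $N$ commutes with itself, so the computation takes place in the commutative subalgebra $\C[N] \subset M_m(\C)$ and no ordering issues arise.

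The key step is to collect terms according to the power of $N$: the coefficient of $N^k$ in $A \cdot B$ equals
\[
\sum_{\mu+\nu=k} \frac{1}{\mu! \, \nu!} \, a^{(\mu)}(X) \cdot \bigl(a^{-1}\bigr)^{(\nu)}(X)
\; = \; \frac{1}{k!} \cdot \frac{d^k}{dX^k}\bigl( a(X) \cdot a(X)^{-1} \bigr)
\]
by Leibniz's rule. Since $a(X) \cdot a(X)^{-1} = 1$ identically, all derivatives of order $\ge 1$ vanish, and only the $k=0$ contribution survives, giving $A \cdot B = \mathbb{1}_m$ as required.

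Conceptually, this reflects the fact that the map $\Phi : f \mapsto \sum_{\nu \ge 0} \frac{1}{\nu!} f^{(\nu)}(X) \cdot N^\nu$ is a $\C$-algebra homomorphism from the localization of $\C[X]$ at $a(X)$ into $M_m(\C)$; indeed, it is the matrix-function evaluation $f \mapsto f(X \cdot \mathbb{1}_m + N)$, which is well-defined precisely because $N$ is nilpotent (so the Taylor series terminates). Hence $\Phi$ automatically preserves multiplicative inverses, and the lemma follows. I do not anticipate any serious obstacle here: the nilpotency of $N$ makes all formal manipulations rigorous, and the argument reduces to bookkeeping with Leibniz's rule.
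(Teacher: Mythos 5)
Your proof is correct, and it is exactly the verification the paper alludes to: the paper's proof consists of the phrase ``easily proved by multiplying the two matrices,'' and you carry out precisely that multiplication, collecting terms by powers of $N$ and invoking the Leibniz rule for $(a \cdot a^{-1})^{(k)} = \delta_{k,0}$. Your closing remark that $f \mapsto f(X\cdot \mathbb{1}_m + N)$ is an algebra homomorphism (so inverses are automatic) is a nice conceptual gloss, but it is the same argument in disguise.
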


Let us introduce the following
\begin{notation}\label{not:taylor}
Let $f(X) \in \C(X) $ be a rational function. Then the \emph{Taylor vector of $f $ of order $<k $} is given by
\[
\Tayl{f(X)}{k} \defeq
\begin{pmatrix}
\frac{1}{(k-1)!} f^{(k-1)}(X)\\  
\vdots\\
f'(X)\\  
f(X)\\  
\end{pmatrix}
\in \C(X)^k .
\]
We will often write $\TaylX{f(X)}{k}{\lambda} $ for the value of the term at the point $\lambda $.
The \emph{truncated Taylor vector} is 
\begin{equation}\label{eq:Tayltrunc}
\Tayltr{f(X)}{k} \defeq 
\begin{pmatrix}
\frac{1}{(k-1)!} f^{(k-1)}(X)\\  
\vdots\\
f'(X)
\end{pmatrix}
\in \C(X)^{k-1} .
\end{equation}
The \emph{Taylor matrix of $f $ of size $k \times k $} is the matrix 
\[
\Ttayl{f}{k}(X) \defeq \sum_{j=0}^\infty \frac{1}{j!} f^{j}(X) \cdot N^j
\]
with the nilpotent matrix $N $ as above.
\end{notation}

\begin{remark}
 For $f,g \in \C(X) $, we have
 \begin{align*}
 \Ttayl{f}{k}(X) \cdot \Tayl{g}{k}(X) & = \Tayl{fg}{k}(X) \\
  \Ttayl{f}{k}(X) \cdot \Ttayl{g}{k}(X) & = \Ttayl{fg}{k}(X) .
 \end{align*}
 \end{remark}
 
We see that the matrix $A=HK^{-1} $ has blocks of the form
\begin{equation}\label{eq:Ablocks}
 A_j \defeq \TtaylX{\frac{\chi_C(X)}{(X-\lambda_j)^{k_j}}}{k_j}{\lambda_j} 
\end{equation}
and hence its inverse matrix $A^{-1} $ is block-diagonal with blocks
\begin{equation}\label{eq:Ainvblocks}
 A^{-1}_j \defeq \TtaylX{\frac{(X-\lambda_j)^{k_j}}{\chi_C(X)}}{k_j}{\lambda_j} 
\end{equation}

We choose $H $ as the transition matrix producing the Jordan normal form. Note that the restriction of $H $ to $\im(1-\Co{\ugamma}) $ induces an isomorphism
\[
H:\im(1-\Co{\ugamma}) \isoto \C^{n-1}
\]
since the eigenspace of $\Co{\ugamma} $ for the eigenvalue $1 $ is one-dimensional.
We let $\vi_\rglambda $ be as in \eqref{eq:vi1}. Let us also denote by $\mathrm{pr}_{n-1}:\C^n \to \C^{n-1} $ the projection to the first $n-1 $ factors and $\iota_{n-1}:\C^{n-1} \to \C^n $ the inclusion $x \mapsto (x,0) $. We obtain the following.
\begin{proposition}\label{ref:propdiag}
Under Assumption \ref{ass:1EW}, we have the following isomorphism of quivers:
\begin{equation} \label{eq:diaquiv}
\xymatrix@!C{
 {\im(1-\Co{\ugamma})} \ar@<-.5ex>[r]_(.6){\iota_0}  & {\C^n} \ar@<-.5ex>[l]_(.4){1-\Co{\ugamma}} \ar@<.5ex>[r]^(.4){1-\Co{\ueta}\Co{\ugamma}^{-1}}  & 
{\im(\Co{\ugamma}-\Co{\ueta})} \ar@<.5ex>[l]^(.6){\iota_\rglambda} \\
{\C^{n-1}} \ar@<-.5ex>[r]_(.6){V''_0} \ar[u]^{\simeq}_{H^{-1}|_{\C^{n-1}}} & {\C^n} \ar@<-.5ex>[l]_(.4){U_0''} \ar[u]^{\simeq}_{H^{-1}} \ar@<.5ex>[r]^(.4){U_\rglambda''}  & 
{\C} \ar@<.5ex>[l]^(.6){V''_\rglambda} \ar[u]^{\simeq}_{\vi_\rglambda}
} 
\end{equation}
with
\begin{align}
U_0''  & \defeq \mathrm{pr}_{n-1} \circ ( 1-\Jordan), \\
V_0'' & \defeq \iota_{n-1}, \label{eq:diqui} \\
U_\rglambda'' & \defeq
\fre \cdot A^{-1} \Jordan^{-1}, \notag \\
V_\rglambda'' & \defeq  {}^t v \defeq {}^t(v_1 \mid v_2 \mid \ldots \mid v_{\ellJ-1} \mid v_\ellJ), \notag
\end{align}
where
\[
\fre \defeq (1,0, \ldots, 0 \mid 1,0, \ldots, 0 \mid \ldots \mid 1,0, \ldots, 0 ) \in (\C^{n-1})^\vee
\]
is the row vector with blocks of length $k_1, \ldots, k_\ellJ $, and $v $ is the columns vector with analogous block structure and blocks of the form
\begin{equation}\label{eq:deffrv}
v_j \defeq \TaylX{\chi_E(X)}{k_j}{\lambda_j} .
\end{equation}
\end{proposition}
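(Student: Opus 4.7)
The plan is to verify that the three vertical arrows in \eqref{eq:diaquiv} are well-defined isomorphisms and that the four squares commute. By construction \eqref{eq:HvdM} and \eqref{eq:HJ}, we have $H\,\Co{\ugamma}=J\,H$, which gives the middle vertical isomorphism $H^{-1}$. A block-by-block inspection of $1-J$ shows that for $\lambda_j\neq 1$ the block $1-\Jblock{\lambda_j}$ is invertible, whereas $1-\Jblock{1}=-N$ has image of codimension one, spanned by all but the last basis vector of that block. Under the ordering \eqref{eq:lambdamult} (with $\lambda_\ellJ=1$ placed last), this yields $\im(1-J)=\iota_{n-1}(\C^{n-1})$, and hence $H^{-1}$ restricts to an isomorphism $\iota_{n-1}(\C^{n-1})\xrightarrow{\sim}\im(1-\Co{\ugamma})$, producing the left vertical arrow. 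The two left squares are then formal: the $u_0$-square reads $(1-\Co{\ugamma})H^{-1}=H^{-1}(1-J)=H^{-1}|_{\C^{n-1}}\circ\mathrm{pr}_{n-1}\circ(1-J)$ (using that $(1-J)$ lands in $\iota_{n-1}(\C^{n-1})$), and the $v_0$-square is tautological.

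For the $v_1$-square I would establish the equivalent identity $H\cdot\vi_\rglambda(1)=\frv$. Applying the block $H_j$ of \eqref{eq:HvdM} to ${}^t(E_n-C_n,\ldots,E_1-C_1)$, a direct expansion expresses its $a$-th entry (from the bottom) as $\tfrac{1}{a!}(\chi_E-\chi_C)^{(a)}(\lambda_j)$. Since $\chi_C$ has a zero of multiplicity $k_j$ at $\lambda_j$, all derivatives $\chi_C^{(a)}(\lambda_j)$ for $a<k_j$ vanish, and we are left with $\tfrac{1}{a!}\chi_E^{(a)}(\lambda_j)$, which is exactly the $a$-th entry of $\frv_j=\TaylX{\chi_E(X)}{k_j}{\lambda_j}$.

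The $u_1$-square is the heart of the matter and will require bridging the two natural Jordan bases. Using the explicit form of $1-\Co{\ueta}\Co{\ugamma}^{-1}$ computed earlier (only its first column is non-zero, and equals $-C_n^{-1}\vi_\rglambda(1)$), we have $(1-\Co{\ueta}\Co{\ugamma}^{-1})x=\vi_\rglambda(-C_n^{-1}x_1)$. Commutativity then reduces to the row-vector identity
\[
\fre\cdot A^{-1}J^{-1}=-C_n^{-1}\cdot(\text{first row of }H^{-1}).
\]
Multiplying on the right by $J$ and applying $H^{-1}J=\Co{\ugamma}H^{-1}$ together with the fact that the first row of $\Co{\ugamma}$ is $(0,\ldots,0,-C_n)$, the right-hand side becomes the last row of $H^{-1}$. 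The claim therefore boils down to $\fre=(\text{last row of }H^{-1})\cdot A=\text{last row of }K^{-1}$, where the last equality uses $A=H\cdot K^{-1}$. This is precisely \eqref{eq:lastrowK}. The main technical point throughout is precisely this interplay between the two natural Jordan bases $H$ (from the right-eigenvector rows) and $K^{-1}$ (from the left-eigenvector columns), connected by the block-diagonal matrix $A=HK^{-1}$ of \eqref{eq:Ablocks}; everything else is bookkeeping once the ordering of blocks is fixed.
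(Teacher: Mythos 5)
Your proof is correct and follows essentially the same route as the paper: the commutativity of the left two squares is a formal consequence of $H\Co{\ugamma}=\Jordan H$, the $v_\rglambda$-square reduces to $H\cdot\vi_\rglambda(1)=\frv$ (using that $\chi_C^{(a)}(\lambda_j)=0$ for $a<k_j$ once the $X^n$ terms cancel), and the $u_\rglambda$-square reduces, via $A=HK^{-1}$, to the identification of $\fre$ with the last row of $K^{-1}$. Your derivation of the $u_\rglambda$-identity goes through the first column of $1-\Co{\ueta}\Co{\ugamma}^{-1}$ and a multiplication by $\Jordan$ rather than the paper's factored form $(\Co{\ugamma}-\Co{\ueta})\Co{\ugamma}^{-1}H^{-1}=(\Co{\ugamma}-\Co{\ueta})H^{-1}\Jordan^{-1}$, but this is a cosmetic variant of the same computation.
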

\begin{proof}
 The statements on $U_0''$ and $V_0''$ are obvious by \eqref{eq:HCH}. We have
 \begin{align*}
 U_\rglambda''= & \varphi_\rglambda^{-1}(\Co{\ugamma}-\Co{\ueta}) \Co{\ugamma}^{-1} H^{-1} =
 \varphi_\rglambda^{-1}(\Co{\ugamma}-\Co{\ueta}) H^{-1} J^{-1} =\\
&  \varphi_\rglambda^{-1}
\left(\begin{array}{c|c}
  \mbox{$\text{\Large $0$}$} &
 \begin{array}{c}
 E_n-C_n\\
 \vdots \\
 E_1-C_1 
\end{array}
 \end{array}\right) \cdot K^{-1} A^{-1} J^{-1} = \fre \cdot A^{-1} J^{-1} 
 \end{align*}
due to the shape of the last row of $K^{-1} $, see \eqref{eq:lastrowK}.

Additionally, due to the shape of $H $ as in \eqref{eq:HvdM} we see that the vector $H\cdot {}^t(E_n,\ldots, E_1) $ decomposes into blocks of length $k_1, \ldots, k_{\ellJ-1}, k_\ellJ $, the $j^{\text{th}} $ block reading
\[
H_j \cdot 
\begin{pmatrix}
 E_n\\ \vdots\\ E_2\\ E_1
\end{pmatrix} = 
\begin{pmatrix}
 \frac{1}{(k_j-1)!} \cdot \chi_E^{(k_j-1)}(\lambda_j) - {n \choose {k_j-1}} \lambda_j^{n-k_j-1}\\
 \vdots\\
\chi_E'(\lambda_j) - {n \choose 1} \lambda_j^{n-1}\\
\chi_E(\lambda_j) - \lambda_j^n
\end{pmatrix}
\]
The analogous computation holds for the coefficients $C_j $ instead of $E_j $. Since $\lambda_j $ is a root of multiplicity $k_j $ for the polynomial $\chi_C(X) $, we obtain
\[
H_j \cdot 
\begin{pmatrix}
C_n\\ \vdots\\ C_1
\end{pmatrix}
=
\begin{pmatrix}
- {n \choose {k_j-1}} \lambda_j^{n-k_j-1}\\
 \vdots\\
- {n \choose 1} \lambda_j^{n-1}\\
- \lambda_j^n
\end{pmatrix}.
\]
The claim about $V_\rglambda'' $ follows.
\end{proof}

\section{The Stokes matrices for the unramified confluent hypergeometric equation}

In this section we want to compute the Stokes matrices for the unramified confluent hypergeometric system $\Hypl{\ualpha}{\ubeta}{\glambda} $ with $\ualpha=(\alpha_1, \ldots, \alpha_n)$ and $\ubeta=(\beta_1, \ldots, \beta_{n-1}) $ under the genericity Assumption \ref{ass:generic}.
Due to Katz's result \eqref{eq:FouH1} $j_\ast \Hypabl \simeq   \Fou{\big( \jint \Hypl{1, -\ubeta}{-\ualpha}{\glambda^{-1}} \big)} $, we are led to apply the previous results to the regular singular hypergeometric system $\Hypl{\ugamma}{\ueta}{\rglambda} $ with
\begin{equation}\label{eq:ueta}
\ugamma \defeq (-\ubeta,1) \ , \  \ueta \defeq -\ualpha \text{ and } \rglambda \defeq \glambda^{-1} .
\end{equation}
Note that Assumption \ref{ass:1EW} is satisfied.

Let $F $ be the perverse sheaf of solutions of $\jint\Hypl{\ugamma}{\ueta}{\rglambda} $. After fixing $\fra\defeq i \rglambda$ and $\frb \defeq \rglambda^{-1} $, we associate to it the quiver
\[
\xymatrix{
{\Phi_0(F)} \ar@<-.5ex>[r]_{v_0} & {\Psi(F)} \ar@<-.5ex>[l]_{u_0} \ar@<.5ex>[r]^{u_\rglambda}  & 
{\Phi_\rglambda(F)} \ar@<.5ex>[l]^{v_\rglambda}}.
\]

Let $M=\Fou(\jint\Hypl{\ugamma}{\ueta}{\rglambda} ) $ be the Fourier transform. We have $\Sigma=\{0,\rglambda \} $ with $0<_\frb \rglambda $ and hence the local isomorphism class of $M $ at $\infty $ is represented by the Stokes matrices (see section \ref{sec:Stokesgeneral}):
\begin{equation}\label{eq:S+S-}
S_+= 
\left( 
\begin{array}{c|c}
 1 & u_0v_\rglambda \\ \hline
0 & 1
\end{array}\right) \ , \ 
S_-=
\left( 
\begin{array}{c|c}
 1-u_0v_0 &  0\\ \hline
-u_\rglambda v_0 & 1-u_\rglambda v_\rglambda
\end{array}\right) \ , \ 
\end{equation}
both of them being understood as linear maps
\[
\Phi_0(F) \oplus \Phi_\rglambda(F) \to \Phi_0(F) \oplus \Phi_\rglambda(F).
\]
Choosing basis for the vector spaces involved gives actual matrices with complex coefficients. The choice of the basis does not change the equivalence class of the pair and we can slightly abuse notation and write
\[
[S_+,S_-] \in \Stmatr{\dim \Phi_0(F), \dim \Phi_\rglambda(F)}
\]
for this class. We apply this formula to the quiver \eqref{eq:quivHyp1} of Proposition \ref{prop:quiverR}. To obtain explicit matrices, we consider the isomorphic quivers given in Proposition \ref{ref:propcompanion} and Proposition \ref{ref:propdiag} respectively.

\subsection{The companion representation}

We get the following result

\begin{theorem}\label{thm:companion}
Let $\ualpha \defeq (\alpha_1, \ldots, \alpha_n) $ and $\ubeta \defeq (\beta_1, \ldots, \beta_{n-1}) $ be generic and consider the polynomials
\begin{align*}
\chi_B(X) \defeq \prod_{j=1}^{n-1} (X-\exp(-2\pi i \beta_j)) &= X^{n-1}+B_1 X^{n-2}+ B_2 X^{n-3} + \ldots + B_{n-1}, \\
\chi_A(X) \defeq \prod_{j=1}^n (X- \exp(-2\pi i \alpha_j)) &= X^n + A_1 X^{n-1} + A_2 X^{n-2}+ \ldots + A_n.
\end{align*}
With the choice of $\fra=i \rglambda$ and $\frb=\rglambda^{-1} $ as base direction and orientation, the equivalence class of Stokes matrices for the hypergeometric system $\Hypl{\ualpha}{\ubeta}{\glambda} $ at infinity is represented by the pair
\[
 S_+  = 
 \left(
 \begin{array}{c|c}
1_{n-1} & x \\[.2cm] \hline
  0 & 1
\end{array}\right)
  \text{ and }
   S_- = 
  \left(
 \begin{array}{c|c}
  \mbox{$\Co{-\ubeta}$} & 0 \\[.2cm] \hline
  y & \exp(2\pi i \lambda)
\end{array}\right)
\]
with
\begin{align*}
x &=   \left( \begin{array}{l}
 A_n\\
 A_{n-1}-B_{n-1}\\
 \vdots \\
 A_2-B_2
\end{array}\right) - (A_1-B_1) \cdot 
 \left( \begin{array}{l}
B_{n-1}\\
B_{n-2}\\
\vdots \\
B_1
\end{array}\right) 
 \in \C^{n-1},\\
 y &= 
\big( (-1)^n\exp(2\pi i \sum_{j=1}^{n-1} \beta_j) ,0 \ldots, 0 \big) \in (\C^{n-1})^\vee, \\
\lambda &= 1- \sum_{j=1}^n \alpha_j + \sum_{j=1}^{n-1} \beta_j.
\end{align*}
 \end{theorem}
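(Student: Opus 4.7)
The plan is to combine Katz's Fourier representation \eqref{eq:FouH1} with the formula \eqref{eq:S+S-} of \cite{DHMS}, which expresses the Stokes matrices of a Fourier transform in terms of the quiver maps $u_c,v_c$ of the source perverse sheaf. For our confluent system the source is $\Hypl{\ugamma}{\ueta}{\rglambda}$ with $\ugamma=(-\ubeta,1)$, $\ueta=-\ualpha$, $\rglambda=\glambda^{-1}$, and its quiver is realised in companion form in Proposition \ref{ref:propcompanion}. Thus the proof reduces to substituting the maps $U_0', V_0', U_\rglambda', V_\rglambda'$ into \eqref{eq:S+S-} and simplifying.

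The first step is to set up the dictionary between the intrinsic polynomials $\chi_C,\chi_E$ from \eqref{eq:defchiC} and the polynomials $\chi_A,\chi_B$ in the statement. From $\eta_i=-\alpha_i$ we obtain $E_k=A_k$; from $\chi_C(X)=(X-1)\chi_B(X)$ we read off $C_1=B_1-1$, $C_k=B_k-B_{k-1}$ for $2\le k\le n-1$, and $C_n=-B_{n-1}$. These identities will telescope favourably throughout.

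Next I would compute the four blocks one by one. The block $\mathbb{1}-U_0'V_0'$ is already identified with $\Co{\ugamma'}$ in Remark \ref{rema:comp}, and hence equals $\Co{-\ubeta}$ under the translation $\ugamma'=-\ubeta$. For the lower-right scalar, $U_\rglambda'V_\rglambda'=-C_n^{-1}(E_n-C_n)$ simplifies to $1+A_n/B_{n-1}$, and evaluating $A_n=(-1)^n e^{-2\pi i\sum\alpha_j}$, $B_{n-1}=(-1)^{n-1}e^{-2\pi i\sum\beta_j}$ gives $1-U_\rglambda'V_\rglambda'=-A_n/B_{n-1}=e^{2\pi i\lambda}$ with the stated $\lambda$. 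For the lower-left row, $U_\rglambda'V_0'$ is supported in its first slot because only the leading component of $U_\rglambda'=(-C_n^{-1},0,\ldots,0)$ is nonzero, so $-U_\rglambda'V_0'$ equals $-C_n^{-1}\cdot(1,0,\ldots,0)$, which again simplifies to $(-1)^n e^{2\pi i\sum\beta_j}$ times the first basis covector, yielding $\mathbb{y}$.

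The main step, and where most of the bookkeeping sits, is the upper-right block $U_0'V_\rglambda'=\mathbb{x}$. The key observation is that the last column of $U_0'$, whose $k$-th entry is $C_n+C_{n-1}+\cdots+C_{n-k+1}$, collapses by the telescoping above to $-B_{n-k}$. Multiplying $U_0'$ against $V_\rglambda'={}^t(E_n-C_n,\ldots,E_1-C_1)$ then produces the first $n-1$ entries of $V_\rglambda'$ plus $(A_1-B_1+1)\cdot{}^t(-B_{n-1},-B_{n-2},\ldots,-B_1)$; substituting $E_k-C_k=A_k-B_k+B_{k-1}$ for $2\le k\le n$ and simplifying, the $+1$ cancels the leftover $B_{k-1}$ contributions so that only $A_{j+1}-B_{j+1}-(A_1-B_1)B_j$ survives (with the convention $B_n=0$ handling the top entry). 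This elementary but delicate calculation is the only non-routine step; once it is complete, reading off the indexing from the companion presentation yields the claimed pair $[S_+,S_-]$.
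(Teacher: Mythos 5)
Your proposal is correct and follows essentially the same route as the paper: apply Katz's Theorem \eqref{eq:FouH1} and the DHMS formula \eqref{eq:S+S-}, substitute the companion-form quiver maps from Proposition \ref{ref:propcompanion}, and simplify using the dictionary $E_k=A_k$, $\chi_C=(X-1)\chi_B$ (your identities $C_k=B_k-B_{k-1}$ are just the telescoped form of the paper's $C_n+\ldots+C_k=-B_{k-1}$). The block-by-block verification you sketch — in particular the telescoping of the last column of $U_0'$ and the cancellation of the $+1$ against the residual $B_{n-k}$ in the upper-right block — is exactly the ``easy computation'' the paper leaves to the reader.
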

\begin{proof}
We use the companion representation of the quiver given in Proposition \ref{ref:propcompanion} for $\ugamma, \ueta $ as in \eqref{eq:ueta}. In comparison to the notation above, we have $\chi_E(X)=\chi_A(X) $ and $\chi_C(X)=(X-1) \cdot \chi_B(X) $. We deduce that
\[
C_n+\ldots+C_k=-B_{k-1}
\]
for $k=2, \ldots, n $ and $C_1=B_1-1 $, as well as $A_k=E_k $. The statement then is an easy computation.
\end{proof}

\subsection{The Jordan representation} \label{sec:JordanStokes}

In the presentation of the Stokes matrices above, the formal monodromy is included a priori in these matrices. In the literature, the formal monodromy most often is given as an extra, isolated term. In the next result, we want to adapt our presentation to this common practice. Additionally, we want to give representatives of the Stokes matrices in a \emph{normal form} in order to better understand the ambiguities arising from different choices of basis. To this end, let us compute the Stokes matrices according to \eqref{eq:S+S-} using the quiver presentation of Proposition \ref{ref:propdiag}. We easily compute $1-U_0''V_0'' $, $U_0''V_\rglambda $ and $U_\rglambda'' V_0'' $. Additionally,
\begin{multline*}
1-U_\rglambda''V_\rglambda''=1- \big[ \varphi_\rglambda^{-1}(\Co{\ugamma}-\Co{\ueta}) \Co{\ugamma}^{-1}H^{-1} \cdot H 
\begin{pmatrix}
 E_n-C_n\\ \vdots\\ E_1-C_1
\end{pmatrix} \big]=\\
1-\big[ \varphi_\rglambda^{-1} 
\left(\begin{array}{c|c}
  \mbox{$\text{\Large $0$}$} &
 \begin{array}{c}
 E_n-C_n\\
\\
 \vdots \\
 E_1-C_1 
\end{array}
 \end{array}\right)
 \left(
 \begin{array}{ccccc}
 -C_n^{-1}C_{n-1} & 1 & & \\
 -C_n^{-1}C_{n-2} & 0 &1 & \\
 \vdots & && \ddots \\
 -C_n^{-1}C_1 & && & 1 \\
 -C_n^{-1} &&&&0 
\end{array}\right)
\begin{pmatrix}
 E_n-C_n\\ \vdots\\ E_1-C_1
\end{pmatrix} \big]=\\
1-\big[ \varphi_\rglambda^{-1} \big( C_n^{-1} (C_n-E_n) \big)
\begin{pmatrix}
 E_n-C_n\\ \vdots\\ E_1-C_1
\end{pmatrix} \big]=C_n^{-1}E_n=\exp(2\pi i \sum_{j=1}^n (\eta_j-\gamma_j)).
\end{multline*}
Due to \eqref{eq:ueta} we have
\[
\sum_{j=1}^n (\eta_j-\gamma_j) = -1-\sum_{j=1}^n \alpha_j +\sum_{j=1}^{n-1} \beta_j \equiv \lambda \text{ mod } \Z,
\]
hence we obtain the equivalence class (recall that $\fre \defeq (1,0, \ldots, 0 \mid 1,0, \ldots, 0 \mid \ldots \mid 1,0, \ldots, 0 ) \in (\C^{n-1})^\vee $):
\begin{equation*}
 [S_+,S_-]=
\left[
\left( 
\begin{array}{c|c}
1_{n-1} & \pr_{n-1} \circ (1-\Jordan) \cdot v \\ \hline
0 & 1
\end{array}\right) \ , \ 
\left( 
\begin{array}{c|c}
   \mbox{$\pr_{n-1} \circ \Jordan \circ \iota_{n-1}$} & 0 \\ \hline
-\fre \cdot A^{-1} \Jordan^{-1} \iota_{n-1} & \exp(2\pi i \lambda)
\end{array}\right) 
\right]
\end{equation*}
We will write
\[
\widetilde{\Jordan} \defeq \pr_{n-1} \circ \Jordan \circ \iota_{n-1} \text{ and } 
\widetilde{A} \defeq \pr_{n-1} \circ A \circ \iota_{n-1}
\]
for the matrices arising by cutting off the last row and column -- hence reducing the block corresponding to the eigenavlue $1 $ by one row and column.
Executing the base change (see Definition \ref{def:equivrel}) given by
\[
\left( 
\begin{array}{c|c}
   -e^{-2\pi i \lambda} \cdot \widetilde{A}^{-1} \widetilde{\Jordan}^{-1} & 0 \\ \hline
0 & 1
\end{array}\right) \cdot S_{\pm} \cdot 
\left( 
\begin{array}{c|c}
- e^{2\pi i \lambda} \cdot \widetilde{\Jordan} \widetilde{A} &0\\ \hline
0 & 1
\end{array}\right)
\]
we obtain the equivalent description (note that $A $ and $\Jordan $ commute):
\begin{equation} \label{eq:SJordan1}
[S_+,S_-]=
\left[
\left( 
\begin{array}{c|c}
1_{n-1} & 
e^{-2\pi i \lambda} \cdot \widetilde{A}^{-1} \widetilde{\Jordan}^{-1}
     (\widetilde{\Jordan}-1) \cdot v \\ \hline
0 &  1
\end{array}\right) \ , \ 
M_\infty^- \cdot 
\left( 
\begin{array}{c|c}
1_{n-1}  &  0 \\ \hline
\fre & 1
\end{array}\right) 
\right]
\end{equation}
where we used the following
\begin{notation}\label{notation:formon}
 We define
 \[
 M_\infty^- \defeq 
 \left( 
\begin{array}{c|c}
\widetilde\Jordan  &0  \\ \hline
0 & e^{2\pi i \lambda}
\end{array}\right) .
\]
Its conjugacy class is the \emph{formal monodromy} at infinity in clockwise orientation with respect to the origin (see Figure \ref{fig:sectors}).
\end{notation}

\subsubsection{Ambiguity respecting the Jordan decomposition} \label{sec:ambigJordan}

The representatives $(S_+,S_-) $ in \eqref{eq:SJordan1} are determined up to the base change of Definition \ref{def:equivrel}. We want to describe a normal form. A natural requirement is to fix the Jordan form of the representative of the formal monodromy. Let us introduce the following notion.
\begin{definition}
 A base change $(S_+,S_-) \mapsto (AS_+A^{-1},AS_-A^{-1}) $ for some $A \in \Delta_{0, \rglambda} $ as in Definition \ref{def:equivrel} is called \emph{adapted to the Jordan form} if its block $A_{0,0} \in \GL_{n-1}(\C) $ respects the decomposition
 \[
 \C^{n-1} = \bigoplus_{j=1}^n \ker(\Jordan-\lambda_j)^{\infty}
 \]
 into the generalized eigenspaces of $\Jordan $ (in other words if $A_{0,0} $ is a block diagonal matrix with respect to this decomposition). 
\end{definition}

\begin{proposition}\label{prop:xy}
 We fix a numbering of the eigenvalues. Then the pair $[S_+,S_-] $ of the Stokes matrices for $\opHyp{\ualpha}{\ubeta} $ has a representative of the form
 \[
 [S_+,S_-] = 
 \left[
\left( 
\begin{array}{c|c}
1_{n-1}  & 
z \\ \hline
0 &  1
\end{array}\right) \ , \ 
M_\infty^-\cdot 
\left( 
\begin{array}{c|c}
1_{n-1}  & 0  \\ \hline
\fre  & 1
\end{array}\right) 
\right]
 \]
The vector $z \in \C^{n-1} $ then is uniquely determined by the equivalence class $[S_+,S_-] $. We will call such a representative the \emph{normal form of} $[S_+,S_-] $.
\end{proposition}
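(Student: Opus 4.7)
Existence is already established by the chain of computations leading to equation \eqref{eq:SJordan1}: it suffices to set $\frz := e^{-2\pi i \lambda} \widetilde{A}^{-1} \widetilde{\Jordan}^{-1}(\widetilde{\Jordan}-\mathbb{1}) \frv$. So the only real content is uniqueness of $\frz$ within the equivalence class, which I would prove by a rigidity analysis of those base changes that preserve the normal-form shape.

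\textbf{Setting up the rigidity.} Suppose $(S_+,S_-)$ and $(S_+',S_-')$ are two normal-form representatives of the same class, so by Definition \ref{def:equivrel} there exist block-diagonal matrices $A=\diag(A_0,\mu)$ and $B=\diag(B_0,\nu)$ with $A_0,B_0\in\GL_{n-1}(\C)$ and $\mu,\nu\in\C^\times$ such that $S_\pm' = A S_\pm B$. Comparing the four blocks of $A S_+ B$ with those of $S_+'$ immediately forces $B_0=A_0^{-1}$, $\nu=\mu^{-1}$, and, from the upper-right block,
\[
\frz' = \mu^{-1} A_0 \frz.
\]
Doing the same for $S_-$ and using the explicit shape $S_- = M_\infty^-\cdot\bigl(\begin{smallmatrix} \mathbb{1}_{n-1} & 0 \\ \fre_{n-1} & 1 \end{smallmatrix}\bigr)$ yields the two constraints
\[
A_0 \widetilde{\Jordan} = \widetilde{\Jordan} A_0, \qquad \mu\,\fre_{n-1} = \fre_{n-1} A_0.
\]

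\textbf{The crucial algebraic step.} The plan is now to show that these two equations together force $A_0 = \mu\,\mathbb{1}_{n-1}$. The point is that $\widetilde{\Jordan}$ has exactly one Jordan block per distinct eigenvalue $\lambda_1,\dots,\lambda_\ell$ (with block sizes $\kappa_1,\dots,\kappa_\ell$), because it is obtained from the Jordan form of the companion matrix $\Co{\ugamma}$ whose eigenspaces are all one-dimensional. Hence any $A_0$ commuting with $\widetilde\Jordan$ must be block-diagonal with respect to the generalized eigenspace decomposition, and on the $j$-th block it is an upper-triangular Toeplitz polynomial $A_0^{(j)} = \sum_{k=0}^{\kappa_j-1} a_k^{(j)} N^k$ in the standard nilpotent $N$. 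Since $\fre_{n-1}$ restricts to $(1,0,\dots,0)$ on the $j$-th block by construction, evaluating the second equation block by block gives $(\mu,0,\dots,0) = (a_0^{(j)}, a_1^{(j)}, \dots, a_{\kappa_j-1}^{(j)})$, i.e.\ $a_0^{(j)}=\mu$ and all higher Toeplitz coefficients vanish. Thus $A_0 = \mu\,\mathbb{1}_{n-1}$, and substituting into $\frz' = \mu^{-1}A_0\frz$ gives $\frz'=\frz$.

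\textbf{Main obstacle.} There is no real computational difficulty here beyond the block-by-block analysis above; the conceptual point one must spot is that the geometric-multiplicity-one property of a companion matrix makes the commutant of $\widetilde\Jordan$ small enough (only Toeplitz polynomials in each Jordan block) that, combined with the very specific ``first-vector-of-each-block'' shape of $\fre_{n-1}$, the rigidity collapses to the scalar freedom $A_0 = \mu\mathbb{1}$. This is what kills the apparent ambiguity in $\frz$.
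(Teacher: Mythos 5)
Your proposal is correct and follows essentially the same route as the paper: use commutation with $\widetilde{\Jordan}$ to reduce $A_0$ to block-diagonal upper-triangular Toeplitz form (relying on the one-Jordan-block-per-eigenvalue property of companion matrices), then use the constraint $\mu\,\fre_{n-1} = \fre_{n-1} A_0$ to kill all the Toeplitz coefficients except $a_0^{(j)}=\mu$. Your version is slightly more careful than the paper's in that you explicitly derive $B_0 = A_0^{-1}$ and $\nu = \mu^{-1}$ from the $S_+$ shape, whereas the paper directly restricts to conjugations ($B=A^{-1}$) via its notion of a base change ``adapted to the Jordan form''; this is a minor gap in the paper's exposition that your argument closes.
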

\begin{proof}
Existence has already been observed above. For uniqueness, let $A $ be any base change. Then we have
\begin{equation}
 [S_+,S_-] = 
 \left[
\left( 
\begin{array}{c|c}
1_{n-1}  & 
A_{\rglambda,\rglambda}^{-1} A_{0,0} z \\ \hline
0 &  1
\end{array}\right) \ , \ 
A \cdot M_\infty^- \cdot A^{-1} \cdot 
\left( 
\begin{array}{c|c}
1_{n-1}  & 0  \\ \hline
A_{\rglambda, \rglambda} \fre A_{0,0}^{-1} & 1
\end{array}\right) \label{eq:SJordan11}
\right]
 \end{equation}
We see that multiplying $A $ with a constant in $\C^\times $ does not change anything, hence we can assume $A_{\rglambda,\rglambda}=1 $. The pair \eqref{eq:SJordan11} is again of normal form if
\[
AM_\infty^- A^{-1} = M_\infty^- \text{ and } \fre  A_{0,0}^{-1} = \fre \ .
\]
From the first condition we deduce that $A $ is adapted to the Jordan form in the following way. If we decompose $A_{0,0} $ into blocks according to the Jordan decomposition of $\C^n $, each block $A_{0,0}^{(\lambda_j, \lambda_k)} $ not in the block diagonal satisfies $\Jblock{\lambda_j} A_{0,0}^{(\lambda_j, \lambda_k)} =A_{0,0}^{(\lambda_j, \lambda_k)} \Jblock{\lambda_k} $ for distinct eigenvalues $\lambda_j \neq \lambda_k $, hence equals the zero matrix. The second condition is then equivalent to ask that each block of $A_{0,0}^{-1} $ (associated to the generalized eigenspace for the eigenvalues $\lambda_j $) has the vector $(1,0, \ldots, 0) $ as its first row. 

The requirement $AM_\infty^- A^{-1}=M_\infty^-$ however, tells us that $A_{0,0} $ commutes with $\widetilde{\Jordan} $ and hence each block of $A_{0,0}^{-1} $ is of the form $\sum_{r=0}^\infty a_r N^r $ for some $a_r \in \C $. Both conditions together imply that $A_{0,0} $ is the identity matrix. 

\end{proof}

\subsubsection{The Jordan representation}

The following result describes the normal form of the Stokes matrices in the unramified case. 
\begin{theorem}\label{thm:jordan}
  Let $\ualpha \defeq (\alpha_1, \ldots, \alpha_n) $ and $\ubeta \defeq (\beta_1, \ldots, \beta_{n-1}) $ be generic and consider the polynomials
\[
\chi_B(X)   \defeq \prod_{j=1}^{n-1} (X-\exp(-2\pi i \beta_j)) \text{\quad and \quad} \chi_A(X)  \defeq \prod_{j=1}^n (X- \exp(-2\pi i \alpha_j))
\]
of degree $n-1 $ and $n $ respectively. We assume that the roots of $\chi_B $ are subdivided into
%\[
%\lambda_1 \defeq e^{-2\pi i \beta_1} = \ldots = e^{-2\pi i \beta_{\kappa_1}}, 
%\lambda_2 \defeq e^{-2\pi i\beta_{k_1+1}} = \ldots = e^{-2\pi i\beta_{\kappa_1+\kappa_2}}, \ldots , 
%\lambda_\ell  \defeq e^{-2\pi i\beta_{n-k_\ell}} = \ldots = e^{-2\pi i\beta_{n-1}}
%\]
\begin{align*}
\lambda_1 & \defeq e^{-2\pi i \beta_1} = \ldots = e^{-2\pi i \beta_{\kappa_1}} \\ 
\lambda_2 & \defeq e^{-2\pi i\beta_{k_1+1}} = \ldots = e^{-2\pi i\beta_{\kappa_1+\kappa_2}} \\ 
\vdots & \hspace*{2cm}\vdots \\ 
\lambda_\ell & \defeq e^{-2\pi i\beta_{n-k_\ell}} = \ldots = e^{-2\pi i\beta_{n-1}}
\end{align*}
with pairwise different $\lambda_j $, and we denote by $\kappa_j$ the corresponding multiplicity so that $\chi_B(X)=\prod_{j=1}^\ell (X- \lambda_j)^{\kappa_j} $. With the choice of $\fra=i\rglambda $ and $\frb=\rglambda^{-1} $ as base direction and orientation, the equivalence class of Stokes matrices for the hypergeometric system $\Hypl{\ualpha}{\ubeta}{\glambda} $ at infinity is given by its normal form ($M_\infty^- $ being the formal monodromy -- Notation \ref{notation:formon}):
\[
[S_+,S_-] = \left[
\left( 
\begin{array}{c|c}
1_{n-1}  & 
z \\ \hline
0 &  1
\end{array}\right) \ , \ 
M_\infty^- \cdot 
\left( 
\begin{array}{c|c}
1_{n-1}  &  0 \\ \hline
\fre & 1
\end{array}\right) 
\right]
 \]
with $\fre=(1,0, \ldots, 0 \mid \ldots \mid 1,0, \ldots 0) $ with blocks of size $\kappa_1, \ldots, \kappa_{\ell} $ and $z = e^{-2\pi i \lambda} \cdot {}^t(z_1, \ldots, z_\ell) $ with
\begin{equation}\label{eq:finalresult}
z_j = \TaylX{\frac{\chi_A(X)}{X}  \cdot \frac{(X-\lambda_j)^{\kappa_j}}{\chi_B(X)}}{\kappa_j}{\lambda_j} \text{\quad for $j=1, \ldots, \ell$} \ ,
\end{equation}
and $\lambda = 1- \sum_{j=1}^n \alpha_j + \sum_{j=1}^{n-1} \beta_j$.
\end{theorem}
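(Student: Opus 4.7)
The plan is to begin from \eqref{eq:SJordan1}, which already exhibits $S_-$ as $M_\infty^-$ times a matrix with identity diagonal blocks and lower-left entry $\fre\iota_{n-1}$, placing the formal monodromy factor exactly as in the statement. Consequently two identifications suffice: first that the row $\fre\iota_{n-1}$ equals $\fre_{n-1}$ of the theorem, and second that the upper-right column $e^{-2\pi i\lambda}\cdot\widetilde{A}^{-1}\widetilde{J}^{-1}\,\pr_{n-1}(J-1)\frv$ of $S_+$ equals ${}^t(\frz_1,\ldots,\frz_\ell)$ with the claimed Taylor vectors.

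For the first identification, the Jordan blocks of $\Co{\ugamma}$ with $\ugamma=(-\ubeta,1)$ are indexed by the roots of $\chi_B$ together with the additional eigenvalue $1$ coming from $\gamma_n=1$. If $1\notin\{\lambda_1,\ldots,\lambda_\ell\}$, then $\Co{\ugamma}$ carries one extra simple block of size one, which $\iota_{n-1}$ discards entirely, leaving the $\ell$ blocks $(1,0,\ldots,0)$ of sizes $\kappa_1,\ldots,\kappa_\ell$. If instead $1=\lambda_\ell$, the last block of $\fre$ has size $\kappa_\ell+1$ and form $(1,0,\ldots,0)$, and $\iota_{n-1}$ deletes only the trailing zero, again leaving $(1,0,\ldots,0)$ of length $\kappa_\ell$. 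Either way the result is precisely $\fre_{n-1}$.

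For the second identification, I would exploit the block-diagonal decomposition of $J$, $A$, and $\frv$ along the eigenvalues of $\Co{\ugamma}$, computing each block's contribution using the multiplicativity $\Ttayl{f}{k}(\lambda)\cdot\Tayl{g}{k}(\lambda)=\Tayl{fg}{k}(\lambda)$ recalled after Notation \ref{not:taylor}. For a block with $\lambda_j\neq 1$ the tilde operation is trivial, and recognising $\Jblock{\lambda_j}=\TtaylX{X}{k_j}{\lambda_j}$, $\Jblock{\lambda_j}^{-1}=\TtaylX{1/X}{k_j}{\lambda_j}$, $\Jblock{\lambda_j}-1=\TtaylX{X-1}{k_j}{\lambda_j}$, $A_j^{-1}=\TtaylX{(X-\lambda_j)^{k_j}/\chi_C(X)}{k_j}{\lambda_j}$ by \eqref{eq:Ainvblocks}, and $\frv_j=\TaylX{\chi_E(X)}{k_j}{\lambda_j}$ by \eqref{eq:deffrv}, the product collapses to
\[
A_j^{-1}\Jblock{\lambda_j}^{-1}(\Jblock{\lambda_j}-1)\frv_j
= \TaylX{\frac{(X-1)\,\chi_E(X)\,(X-\lambda_j)^{k_j}}{X\,\chi_C(X)}}{k_j}{\lambda_j}.
\]
Substituting $\chi_C=(X-1)\chi_B$ and $\chi_E=\chi_A$ produces $\frz_j$ with $k_j=\kappa_j$, exactly as claimed.

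The one genuinely delicate step, and the only place where the tilde operation is nontrivial, is the block for the eigenvalue $1$ in the resonant case $1=\lambda_\ell$. Here the contribution is $\pr_{n-1}(\Jblock{1}-1)\frv_\ell=\pr_{n-1}(N\frv_\ell)$; since $N\frv_\ell$ has a zero in its last slot, $\pr_{n-1}$ loses no information and yields exactly $\TaylX{\chi_E(X)}{\kappa_\ell}{1}$. The tilde-reduced matrices $\widetilde{A}_\ell^{-1}$ and $\widetilde{\Jblock{1}}^{-1}$ are again Taylor matrices of size $\kappa_\ell$, and the same multiplicative collapse produces $\frz_\ell=\TaylX{\frac{\chi_A(X)}{X}\cdot\frac{(X-1)^{\kappa_\ell}}{\chi_B(X)}}{\kappa_\ell}{1}$. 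The key cancellation is that the factor $(X-1)$ absorbed into $\chi_C$ precisely accounts for the size-one reduction of this Jordan block, so the closed formula for $\frz_j$ applies uniformly regardless of whether $1$ happens to coincide with a root of $\chi_B$.
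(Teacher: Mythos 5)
Your proposal is correct and follows the paper's argument essentially step for step: you start from the same intermediate presentation~\eqref{eq:SJordan1}, decompose all factors block-by-block along the Jordan structure, and use the multiplicativity of Taylor matrices to collapse the product of rational functions to $\frz_j$, with a separate treatment of the size-reduced block at the eigenvalue $1$ (distinguishing whether $1$ is a root of $\chi_B$ or not, exactly as the paper's two cases). The only stylistic difference is that you track the truncation directly through $\pr_{n-1}$ and the vanishing last entry of $N\frv_\ell$, whereas the paper packages the same cancellation into the truncated-Taylor identity $\TayltrX{(X-1)f(X)}{k+1}{1}=\TaylX{f(X)}{k}{1}$; both yield the same result.
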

\begin{proof}
We consider the Jordan representation of the quiver as in \eqref{eq:SJordan1}
\[
[S_+,S_-]=
\left[
\left( 
\begin{array}{c|c}
1_{n-1} & 
e^{-2\pi i \lambda} \cdot\widetilde{A}^{-1} \widetilde{\Jordan}^{-1}
 (\widetilde{\Jordan}-1) \cdot v \\ \hline
0 &  1
\end{array}\right) \ , \ 
M_\infty^- \cdot 
\left( 
\begin{array}{c|c}
1_{n-1}  &0  \\ \hline
\fre & 1
\end{array}\right) 
\right]
\]
with $\ugamma, \ueta $ as in \eqref{eq:ueta}. Let us assume that we ordered the $\lambda_j $ so that at most $\lambda_\ell=1 $. Then we are in the situation of \eqref{eq:lambdamult}. More precisely, let us distinguish the cases whether some of the $\beta_j $ are integers or not. Note that with respect to the notation \eqref{eq:defchiC}, we have $\chi_E(X) =\chi_A(X) $ and $\chi_C(X) = (X-1) \cdot \chi_B(X)$.

\medskip\noindent
{$1^{\text{st}} $ case: $1 \in \{ \lambda_j \mid j=1, \ldots, \ell \} $.}\\ \noindent
Then we have (in the notation of \eqref{eq:lambdamult}) $\ellJ=\ell $ and $k_\ellJ=\kappa_\ell+1 $ (cp. \eqref{eq:ueta}). Accordingly, the matrices $\widetilde{A} $ and $\widetilde{J} $ have blocks $\widetilde{A}_j  = A_j $ and $\widetilde{J}_j  = J_j $ for $j=1, \ldots, \ell-1 $ and 
$\widetilde{A}_\ell $, $\widetilde{J}_\ell $ arise from $A_\ell $ and $J_\ell $ (the latter being the Jordan block for the eigenvalue $1 $ of size $k_q =\kappa_\ell+1 $) by deleting the last row and column. 

We have to determine the upper right block of $S_+ $, namely $\widetilde{A}^{-1} (1-\widetilde{\Jordan}^{-1}) \cdot v $. Since all factors decompose into blocks, it is enough to consider each block individually. For $j=1, \ldots, \ell-1 $, we have
\begin{align*}
\widetilde{A}^{-1}_j &= \TtaylX{\frac{(X-\lambda_j)^{\kappa_j}}{\chi_C(X)}}{\kappa_j}{\lambda_j} & \text{see \eqref{eq:Ainvblocks}}\\
(1-\widetilde{J}^{-1})_j &= \TtaylX{1-X^{-1}}{\kappa_j}{\lambda_j} \\
v_j &= \TaylX{\chi_E(X)}{\kappa_j}{\lambda_j} & \text{see \eqref{eq:deffrv}}.
\end{align*}
Hence we obtain
\begin{equation}\label{eq:zcomp}
 \widetilde{A}_j^{-1} (1-\widetilde{\Jordan}^{-1})_j \cdot v_j  = 
 \TaylX{\frac{(X-\lambda_j)^{\kappa_j}}{(X-1)\cdot \chi_B(X)} \cdot \frac{X-1}{X} \cdot \chi_A(X)}{\kappa_j}{\lambda_j}
 \end{equation}
as asserted.
For $j=\ell $ we obtain the same result but we have to consider the truncation -- see \eqref{eq:Tayltrunc} -- and recalling that $k_\ell=\kappa_\ell+1 $:
\begin{multline*}
A_\ell^{-1} (1-J^{-1})_j v_j = \TayltrX{\frac{(X-1)^{k_\ell}}{\chi_B(X)} \cdot \frac{1}{X} \cdot \chi_A(X)}{k_\ell}{1}\\
=\TayltrX{(X-1) \cdot \frac{(X-1)^{\kappa_\ell}}{\chi_B(X)} \cdot \frac{\chi_A(X)}{X} }{\kappa_\ell+1}{1}
=\TaylX{\frac{(X-1)^{\kappa_\ell}}{\chi_B(X)} \cdot \frac{\chi_A(X)}{X} }{\kappa_\ell}{1} \ ,
\end{multline*}
where in the last equation we used the fact that
\[
\TayltrX{(X-1)\cdot f(X)}{k+1}{1} = \TaylX{f(X)}{k}{1} \ .
\]

\medskip\noindent
{$2^{\text{nd}} $ case: $1 \not\in \{ \lambda_j \mid j=1, \ldots, \ell \} $.}\\ \noindent
Then we have $\ellJ=\ell+1 $ and $k_\ellJ=1 $. The eigenvalue $1 $ (artificially added for the regular singular system) of $J $ has a $1\times 1 $-Jordan block which is distinguished by the truncation. Therefore, only the blocks for $j=1, \ldots, \ellJ-1=\ell $ remain to be considered. The computation is the same as above \eqref{eq:zcomp}.
\end{proof}

Let us emphasize that the representation in Theorem \ref{thm:jordan} contains only rational expressions in the eigenvalues $e^{-2\pi i \alpha_j} $ and $e^{2\pi i \beta_j} $ of the local monodromies or their inverses. In comparison to Duval-Mitschi's result, no Gamma-function appears. As a consequence, we obtain the
\begin{corollary}
In the non-resonant unramified case (Assumption \ref{ass:generic}),  there is a presentation of the Stokes matrices for $\Hypabl $ such that $S_+, S_- $ are defined over $\Q(e^{2\pi i \alpha_1}, \ldots, e^{2\pi i \alpha_n}, e^{2\pi i \beta_1}, \ldots, e^{2\pi i \beta_{n-1}}) $.

In particular, if $\alpha_j \in \Q $ for $j=1, \ldots, n $ and $\beta_j\in \Q $ for $j=1, \ldots, n-1 $ and Assumption \ref{ass:generic} is satisfied, then there is a presentation of the Stokes matrices such that $S_+, S_- $ are defined over a cyclotomic field of finite degree over $\Q $.
\end{corollary}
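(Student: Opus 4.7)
The plan is to verify both statements by inspecting the explicit normal form produced by Theorem \ref{thm:jordan}. Set $K \defeq \Q(e^{2\pi i \alpha_1}, \ldots, e^{2\pi i \alpha_n}, e^{2\pi i \beta_1}, \ldots, e^{2\pi i \beta_{n-1}})$. I would walk through the entries of $S_+$ and $S_-$ appearing in the theorem, show each lies in $K$, and then deduce the cyclotomic refinement by observing that rational parameters make $K$ cyclotomic.

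For $S_-$, the rightmost block-triangular factor has entries only in $\{0,1\}$. The formal monodromy $M_\infty^- = \widetilde{\Jordan} \oplus (e^{2\pi i \lambda})$ has diagonal entries $\lambda_j = e^{-2\pi i \beta_j}$ (inverses of generators of $K$), unit superdiagonal entries, and the scalar $e^{2\pi i \lambda}$ in the last slot. Since $\lambda = 1 - \sum_j \alpha_j + \sum_j \beta_j$, this scalar equals the monomial $\prod_j e^{-2\pi i \alpha_j}\cdot \prod_j e^{2\pi i \beta_j}$, still in $K$. For $S_+$, all entries are $0$, $1$, or a coordinate of $\frz = e^{-2\pi i \lambda} \cdot {}^t(\frz_1,\ldots,\frz_\ell)$, and the prefactor $e^{-2\pi i \lambda}$ is again a monomial in the generators.

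The only substantive step is to check that each Taylor block $\frz_j = \TaylX{f_j(X)}{\kappa_j}{\lambda_j}$ with $f_j(X) = (\chi_A(X)/X) \cdot (X-\lambda_j)^{\kappa_j}/\chi_B(X)$ lies in $K^{\kappa_j}$. The polynomials $\chi_A$ and $\chi_B$ have coefficients in $K$, being elementary symmetric functions in the $e^{-2\pi i \alpha_i}$ and $e^{-2\pi i \beta_j}$. Because $\kappa_j$ is by definition exactly the multiplicity of $\lambda_j$ as a root of $\chi_B$, the factor $(X-\lambda_j)^{\kappa_j}$ cancels against the corresponding factor of $\chi_B$, leaving a rational function whose denominator $X \cdot \prod_{i\neq j}(X-\lambda_i)^{\kappa_i}$ does not vanish at $X = \lambda_j$ (the $\lambda_i$ are pairwise distinct and $\lambda_j$ is a nonzero exponential). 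Hence $f_j$ is regular at $\lambda_j$, and its Taylor coefficients arise by evaluating polynomial expressions in $X$ with coefficients in $K$ at the point $\lambda_j \in K$ and dividing by integer factorials, so $\frz_j \in K^{\kappa_j}$. This settles the first statement.

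For the cyclotomic refinement, if all $\alpha_j, \beta_j \in \Q$, pick a common denominator $N$; then every generator of $K$ is an $N$-th root of unity, so $K \subset \Q(\zeta_N)$, a finite cyclotomic extension of $\Q$. I expect no genuine obstacle: the entire argument is bookkeeping over $K$, and in particular no Gamma-factor or other transcendental quantity ever appears — this is precisely what distinguishes our presentation from Duval--Mitschi's, where the Gamma-values obscure the cyclotomic nature of the coefficient field. The only mildly delicate point to keep in mind is the regularity of $f_j$ at $\lambda_j$, which is why the multiplicity exponent in the numerator must match that in $\chi_B$.
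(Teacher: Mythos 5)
Your argument is correct and matches the paper's (implicit) reasoning: the corollary follows by inspecting the explicit normal form of Theorem~\ref{thm:jordan}, noting that every entry is a rational expression in the exponentials $e^{\pm 2\pi i\alpha_j}$, $e^{\pm 2\pi i\beta_j}$ and hence lies in $K$, and that $K$ is contained in a finite cyclotomic field when the parameters are rational. The one place you are slightly imprecise is calling the Taylor coefficients of $f_j$ ``polynomial expressions evaluated at $\lambda_j$'' -- they are rational expressions, with powers of the denominator $Q(\lambda_j)$ appearing -- but since you have already established $Q(\lambda_j)\neq 0$ and $K$ is a field, the conclusion $\frz_j\in K^{\kappa_j}$ is unaffected.
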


\section{Some special cases}

\subsection{The cyclotomic and the real case}\label{sec:cyclo}

As a corollary of the companion representation, we can easily isolate cases where we find representatives with integer coefficients. We assume that $\ualpha \in \Q^n $ and $\ubeta \in \Q^{n-1} $. Note that the hypergeometric module only depends on the classes of the parameters $\alpha_j $ and $\beta_i $ \emph{modulo integers}, i.e. we can assume that $\alpha_j, \beta_i \in [0,1) \cap \Q $.

\begin{definition}
We say that a family of parameters $\ugamma=(\gamma_1, \ldots, \gamma_k) \in ([0,1) \cap\Q)^k $ satisfies the \emph{cyclotomic property} if the following holds: 
\begin{multline*}
\text{If } \frac{v}{w} \in \ugamma \text{ for some relatively prime } v,w \in \Z \Longrightarrow \\
\frac{u}{w} \in \ugamma \text{ for all $u $ such that } \mathrm{gcd}(u,w)=1,
\end{multline*}
and then each value $\tfrac{u}{w} $ with $\mathrm{gcd}(u,w)=1 $ appears with the same mutliplicity among the parameters $\ugamma $.
\end{definition}

The following is another way to think about this definition where we denote by 
\[
\Phi_m(X) \defeq \prod_{a \in (\Z/m\Z)^\times} (X- e^{{2\pi i a}/{m}}) \in \Z[X]
\]
the irreducible cyclotomic polynomial for the $m^\text{th} $ cyclotomic field over $\Q $.

\begin{lemma}
 The parameters $\ugamma \in ([0,1) \cap \Q)^k $ satisfy the cyclotomic property if and only if the polynomial
$ \chi(X) \defeq \prod_{j=1}^k (X-e^{-2\pi i \gamma_j}) \in \C[X] $
decomposes into a product of powers of irreducible cyclotomic polynomials
\[
\chi(X)= \prod_{k=1}^r \big( \Phi_{w_k}(X) \big)^{\nu_k} 
\]
for finitely many $w_k, \nu_k \in \N $.
\end{lemma}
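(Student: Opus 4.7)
The plan is to translate the combinatorial cyclotomic property into a factorization of $\chi(X)$ over $\Q$ using three standard inputs: (i) $e^{-2\pi i v/w}$ is a primitive $w$-th root of unity precisely when $\gcd(v,w)=1$; (ii) as $u$ ranges over $(\Z/w\Z)^\times$, the values $e^{-2\pi i u/w}$ traverse all primitive $w$-th roots of unity, since $u\mapsto -u\bmod w$ is a bijection on units; (iii) the cyclotomic polynomials $\Phi_w\in\Z[X]$ are the pairwise coprime minimal polynomials over $\Q$ of the primitive $w$-th roots of unity, so that $X^w-1=\prod_{d\mid w}\Phi_d(X)$ is the unique factorization into $\Q$-irreducibles.

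For the forward implication, I would group the parameters $\gamma_j$ by the denominator of their reduced form. The cyclotomic property says that every denominator $w$ that occurs is accompanied by \emph{all} reduced fractions $u/w\in[0,1)$ with $\gcd(u,w)=1$, each appearing with a common multiplicity $\nu_w$. Collecting the corresponding linear factors of $\chi$ gives
\[
\prod_{\substack{0\le u<w\\ \gcd(u,w)=1}} \bigl(X-e^{-2\pi i u/w}\bigr)^{\nu_w} = \Phi_w(X)^{\nu_w},
\]
and taking the product over all occurring denominators yields the asserted decomposition.

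For the converse, starting from a presentation $\chi(X)=\prod_{k=1}^r \Phi_{w_k}(X)^{\nu_k}$ with pairwise distinct $w_k$, unique factorization in $\C[X]$ combined with coprimality of the $\Phi_{w_k}$ enumerates the roots of $\chi$ with multiplicity: the primitive $w_k$-th roots of unity, each appearing $\nu_k$ times, and nothing else. Hence every $\gamma_j$ must have reduced denominator in $\{w_1,\dots,w_r\}$, and for each such $w_k$ all reduced fractions $u/w_k\in[0,1)$ occur in $\ugamma$ with multiplicity exactly $\nu_k$ — which is the cyclotomic property.

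The argument is essentially bookkeeping and I do not expect any real obstacle; the only mild point requiring care is the sign convention: since the definition uses $e^{-2\pi i\gamma}$ rather than $e^{+2\pi i\gamma}$, one must invoke the bijection $u\leftrightarrow -u$ on $(\Z/w\Z)^\times$ to ensure that the set of primitive $w$-th roots $\{e^{-2\pi i u/w}\}$ matches the root set of $\Phi_w$.
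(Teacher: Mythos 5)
Your argument is correct. The paper states this lemma without proof (treating it as a standard reformulation), so there is no paper proof to compare against; your bookkeeping argument is a complete and valid elementary justification. The one point that genuinely requires a remark is exactly the one you flagged: since the paper defines $\Phi_m(X)=\prod_{a\in(\Z/m\Z)^\times}(X-e^{2\pi i a/m})$ with a $+$ sign while $\chi$ uses $e^{-2\pi i\gamma_j}$, one must observe that $u\mapsto -u$ is a bijection on $(\Z/w\Z)^\times$ so that $\{e^{-2\pi i u/w}:u\in(\Z/w\Z)^\times\}$ coincides with the root set of $\Phi_w$. With that in place, the forward direction groups the $\gamma_j$ by reduced denominator and the converse reads off the root multiset of $\chi$ from the coprime factors $\Phi_{w_k}$; both steps are airtight, and injectivity of $\gamma\mapsto e^{-2\pi i\gamma}$ on $[0,1)$ ensures multiplicities transfer correctly.
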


\begin{corollary}[to Theorem \ref{thm:companion}]\label{cor:cyclo}
If $\ualpha, \ubeta $ are generic and both satisfy the cyclotomic property, there is a representative $[S_+,S_-] $ of the Stokes matrices for $\Hypl{\ualpha}{\ubeta}{\glambda} $ with integer entries: $S_+, S_- \in \mathrm{GL}_n(\Z) $. Additionally, there is a representative of the formal mondromy with entries in $\Z $.
\end{corollary}
\begin{proof}
The entries of the representative of Theorem \ref{thm:companion} only contain the coefficients of the polynomials $\chi_A(X) $ and $\chi_B(X) $. Under the genericness assumption and the cyclotomic property, we have $\chi_A(X) = \prod_{k=1}^s \Phi_{t_k}(X) \in \Z[X] $ and $\chi_B(X) = \prod_{k=1}^r \big( \Phi_{w_k}(X) \big)^{\nu_k} \in \Z[X] $. In addition, we have $\lambda \in \Z $ and $\sum_{j=1}^{n-1} \beta_j \in \Z $. The determinant of $\Co{-\ubeta} $ is $B_{n-1}=\pm 1 $. The diagonal blocks of $S_- $ give the formal monodromy.
\end{proof}

\medskip
In the same spirit, we introduce the following notion:
\begin{definition}
We say that a family of parameters $\ugamma=(\gamma_1, \ldots, \gamma_k) \in (\C^\times)^k $ satisfies the \emph{complex conjugate property} if the following holds: 
\[
\text{for any $j=1, \ldots, k $ there exists an $i \in \{1, \ldots, k \} $ such that } \gamma_j+\gamma_i \in \Z \ .
\]
(Note, that $j=i $ is allowed, i.e. $\gamma_j \in \tfrac{1}{2} \Z $).
\end{definition}

\begin{corollary}[to Theorem \ref{thm:companion}]\label{cor:real}
For $\ualpha $ and $\ubeta $ generic, both satisfying the complex conjugate property, there is a representative $[S_+,S_-] $ of the Stokes matrices for 
$\Hypl{\ualpha}{\ubeta}{\glambda} $ with real coefficients: $S_+, S_- \in \mathrm{GL}_n(\R) $. The same holds for the formal monodromy.
\end{corollary}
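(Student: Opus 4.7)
The plan is to invoke the companion form from Theorem~\ref{thm:companion} and show that under the complex conjugate property all the ingredients appearing there—$\chi_A, \chi_B$, the scalar $\exp(2\pi i\lambda)$, and the entry $\mathbb{y}_1 = (-1)^n\exp(2\pi i\sum\beta_j)$—are real. First, I would observe that the complex conjugate property precisely says that the multiset of roots of $\chi_A$ (respectively $\chi_B$) is stable under complex conjugation: if $\gamma_j+\gamma_i\in\Z$ then $e^{-2\pi i\gamma_i}=e^{2\pi i\gamma_j}=\overline{e^{-2\pi i\gamma_j}}$ (the case $i=j$ corresponds to $\gamma_j\in\tfrac12\Z$, which gives a real root $\pm 1$). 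Consequently $\chi_A(X), \chi_B(X)\in\R[X]$, so all coefficients $A_j, B_j$ are real.

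Next, I would analyze the exponent $\lambda = 1-\sum_{j=1}^n\alpha_j + \sum_{j=1}^{n-1}\beta_j$. Pairing up the $\alpha_j$'s (and separately the $\beta_j$'s) via the complex conjugate property, each true pair $\{\alpha_j,\alpha_i\}$ with $i\neq j$ contributes an integer to the sum, while each self-paired $\alpha_j\in\tfrac12\Z$ contributes a half-integer. Hence $\sum\alpha_j,\sum\beta_j\in\tfrac12\Z$, and therefore $\lambda\in\tfrac12\Z$, which yields $\exp(2\pi i\lambda)\in\{\pm 1\}$. The same reasoning gives $\exp(2\pi i\sum_{j=1}^{n-1}\beta_j)\in\{\pm 1\}\subset\R$, so $\mathbb{y}_1\in\R$.

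Putting this together, every matrix entry in the companion presentation of Theorem~\ref{thm:companion} is real: the block $\Co{-\ubeta}$ has real entries since $B_j\in\R$; the bottom-right scalar $\exp(2\pi i\lambda)$ is $\pm 1$; the coordinates $\mathbb{x}_j = A_{j+1}-B_{j+1}-(A_1-B_1)B_j$ are real as polynomial expressions in real coefficients; and $\mathbb{y}=((-1)^n\exp(2\pi i\sum\beta_j),0,\ldots,0)$ is real. Finally, invertibility is automatic from $\det S_+ = 1$ and $\det S_- = \pm B_{n-1}\exp(2\pi i\lambda)\neq 0$ (note $B_{n-1}=\prod_j e^{-2\pi i\beta_j}\neq 0$), giving $S_\pm\in\mathrm{GL}_n(\R)$. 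I do not foresee a genuine obstacle here; the only care needed is in the bookkeeping for self-conjugate parameters in $\tfrac12\Z$ when computing the parity of $\sum\alpha_j$ and $\sum\beta_j$, which is the one step where a sign could in principle slip but which causes no problem because we only need membership in $\tfrac12\Z$.
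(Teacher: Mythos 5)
Your proof is correct and follows essentially the same route as the paper: both observe that the complex conjugate property forces the roots of $\chi_A$ and $\chi_B$ to be stable under complex conjugation (hence the coefficients are real), and that $\sum\alpha_j,\sum\beta_j\in\tfrac12\Z$, so $\lambda\in\tfrac12\Z$ and the exponentials $e^{2\pi i\lambda}$, $e^{2\pi i\sum\beta_j}$ lie in $\{\pm1\}$. The paper's proof is a two-line statement of these same two facts; your version simply spells out the pairing argument and the invertibility check, which are implicit there.
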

\begin{proof}
 Under the assumptions, the polynomials $\chi_A(X) $ and $\chi_B(X) $ are invariant under complex conjugation, hence have real coefficients. Additionally $\lambda \in \tfrac{1}{2}\Z $ and $\sum_{j=1}^{n-1} \beta_j \in \tfrac{1}{2}\Z $. 
\end{proof}

\begin{remark}
 It would be interesting to understand the relation of this corollary with the existence of a real variation of Hodge structures under similar conditions in the work of Fedorov (see Theorem 2 of \cite{fedorov}, proving a conjecture of A. Corti and V. Golyshev). We plan to address this question in a future work. 
\end{remark}

\subsection{The diagonalizable case and comparison with Duval-Mitschi's Result}

In \cite{DM} the authors compute the Stokes matrices for confluent hypergeometric systems (even in the ramified and reducible case). However, they make the following additional assumption -- the assumption is stated in the \emph{Remarque} on page 29 in \cite{DM}.

\begin{assumption}
 If the generic (in terms of Assumption \ref{ass:generic}) parameters $(\ualpha,\ubeta) $ additionally satisfy the condition:
 \[
 \beta_i \not \equiv \beta_j \text{ mod } \Z \text{ for all } i \neq j ,
 \]
 we call them \emph{generic and diagonalizabe}.
 \end{assumption}

Under this assumption, the eigenvalues $\{ \exp(2\pi i \beta_j)\mid j=1, \ldots, n-1 \} $  are pairwise disjoint and hence the Jordan matrix $\Jordan $ of \eqref{eq:HCH} is a diagonal matrix and the corresponding blocks are of size one. Therefore, the main result simplifies to the following statement.

\begin{theorem}\label{thm:diagonal}
Let $(\ualpha;\ubeta) $ be generic and diagonalizable. Define $\chi_A(X)  \defeq \prod_{j=1}^n (X- e^{-2\pi i \alpha_j}) $ and $\chi_B(X)  \defeq \prod_{j=1}^{n-1} (X-e^{-2\pi i \beta_j})$. 
%\[
%\chi_A(X)  \defeq \prod_{j=1}^n (X- e^{-2\pi i \alpha_j}) \text{\quad and \quad}
%\chi_B(X)  \defeq \prod_{j=1}^{n-1} (X-e^{-2\pi i \beta_j}).
%\]
Then the Stokes matrices of the hypergeometric module $\Hyp{\ualpha}{\ubeta} $ have a representation in the form
\begin{equation}\label{eq:ourS}
[ S_+, S_-]  = [
 \left(
 \begin{array}{c|c}
1_{n-1} & z \\[.2cm] \hline
  0 & 1
\end{array}\right) \ , \ 
  \left(
 \begin{array}{c|c}
  \mbox{$\diag(e^{-2 \pi i \ubeta})$} & 0 \\[.2cm] \hline
0 & e^{2\pi i \lambda}
\end{array}\right) 
\cdot
  \left(
 \begin{array}{c|c}
1_{n-1} & 0 \\[.2cm] \hline
  \fre & 1
\end{array}\right) 
]
\end{equation}
with $\fre=(1,1,\ldots, 1) $ and $z=e^{-2\pi i \lambda} \cdot  {}^t\!(z_1, \ldots, z_{n-1}) \in \C^{n-1} $ with
\[
z_j = \frac{\chi_A(e^{-2\pi i \beta_j})}{e^{-2\pi i \beta_j} \cdot \chi'_B(e^{-2\pi i \beta_j})}
\]
\end{theorem}

\begin{remark}
Though very elementary, let us write down the explicit formulae
\begin{align}
\label{eq:chiA}
\chi_A(e^{-2\pi i \beta_j}) & = \prod_{\ell=1}^n (e^{-2\pi i \beta_j} - e^{-2\pi i \alpha_\ell}) \\
\label{eq:chiB}
\chi_B'(e^{-2\pi i \beta_j}) & = \prod_{\ell \neq j} (e^{-2\pi i \beta_j}-e^{-2\pi i \beta_\ell}).
\end{align}
\end{remark}

\begin{remark}\label{rem:distinct}
In order to compare our result with the existing computations of Duval-Mitschi, let us take a closer look at the situation and choices considered by the latter.
\begin{enumerate}
\item They study (more generally for $q>p $) the hypergeometric operator 
\[
D_{q,p}(\umu; \unu) \defeq (-1)^{q-p} z \prod_{j=1}^p ( z\partial_z + \mu_j) - \prod_{j=1}^q (z\partial_z+\nu_j-1) \ ,
\]
which in the unramified case $p=q-1 $ gives $D_{q,q-1}(\umu;\unu) = -\opHypl{1-\unu}{-\umu}{-1} $ for $\umu \in \C^{q-1} $ and $\unu \in \C^q $. Hence, they consider the case $\glambda=\rglambda=-1 $ -- note, that in \cite[p.50]{DM}, there is a sign typo in the formula for $D_{3,2}$ in comparison to their original definition of $D_{q,p} $ and the general result.

 \item They seperate the formal monodromy from the Stokes matrices. The formal monodromy is induced by counter-clockwise rotation with respect to the origin. 
\end{enumerate}
\end{remark}

\subsubsection{Comparison of the geometric setup}\label{subsec:equrel}

Duval-Mitschi computed the Stokes matrices $\DMS_0 $ and $\DMS_1 $ by constructing formal solutions and explicitly lifting these in sectors. The sectors (unbounded with respect to the radius) they consider are (in the obvious notation they use)
\begin{align*}
 \theta_0 & \defeq \theta(-\frac{3\pi}{2},\frac{\pi}{2}), & \theta_1 & \defeq \theta(-\frac{\pi}{2}, \frac{3\pi}{2}) , & \theta_2 & \defeq \theta(-\frac{\pi}{2},\frac{5\pi}{2}),
\end{align*}
to be read in the universal cover of $\C\smallsetminus \{ 0 \} $. They write $\Sigma_j $ for the constructed basis of solutions in $\theta_j $. We understand these as isomorphisms
\[
\Sigma_j: \C^n \isoto \shl_b
\]
to the stalk of the local system $\shl $ at the base-point $b $ near $0 $. They define the Stokes matrices by
\[
 \DMS_0  \defeq \Sigma_1^{-1} \circ \Sigma_0 \text{ and }
  \DMS_1  \defeq \Sigma_2^{-1} \circ \Sigma_1. 
\]

Since $\rglambda=\glambda=-1 $ and $\fra=-i $, in the notation of \cite{DHMS}, we consider the sectors
\[
H_{\pm \fra} \defeq \{ z \in \C^\times \mid \pm \Re (\fra z) \ge 0 \} = \{ z \in \C^\times \mid \pm \Im(z) \ge 0 \}
\]
where $z $ is a coordinate in the target $\C $ centered at $0 $. The Stokes matrices $S_{\pm} $ result form the trivializations of the enhanced solutions sheaf in these sectors. More precisely, they are the glueing matrices of these trivializations from $H_\fra $ to the one on $H_{-\fra} $ -- see \cite[Section 5.2]{DHMS}. We picture the situation in Figure \ref{fig:sectors}.

\begin{figure}\centering
\begin{tikzpicture}[scale=.7,
	background rectangle/.style={draw=black,dashed,fill=white},
	show background rectangle]
\coordinate (C1) at (-.5,0);
\coordinate (C2) at (.3,.1); %vorher (0,
\coordinate (C3) at (1.5,0);  %vorher (.5,
\coordinate (C4) at (2.6,.1);  %vorher (.5,
\coordinate (C5) at (3.5,-.5);  %vorher (.5,

\begin{scope}[decoration={
    markings,
    mark=at position .7 with {\arrow{>}}}
    ]
\clip (-1.5,-1.5) rectangle (4.2, 2); 
\filldraw[fill=black!30, draw=black, thick] (C3)++(-3.5,0) -- ++(7,0) -- ++(0,3) -- ++(-7,0) -- cycle ;
\draw[thick, postaction=decorate] (C3)++(-3.5,0) -- (C3);
\draw (C2) node[below]{$h_{\frb}$};
\draw[thick, postaction=decorate] (C3)++(3.5,0) -- (C3);
\draw (C4) node[below]{$h_{-\frb}$};
\end{scope}

\filldraw[black] (C3) circle (2pt) node[below]{$\infty$} ;
\draw (1.5,1.4)  node{$H_{-\fra} $};
\draw (1.5,-1.4)  node{$H_{\fra}$};

\begin{scope}[decoration={
    markings,
    mark=at position .99 with {\arrow{>}}}
    ]
\draw[thick,blue,postaction={decorate}]	(0,-1) to[in=-135, out=-235](0,1) ;
\draw[blue] (0.2,.5) node{$S_+ $};
\draw[thick,red,postaction={decorate}]	(-0.3,-1) to[in=235, out=125](-0.3,1) ;
\draw[red] (-0.8,1) node{$\DMS_1 $};

\draw[thick,blue,postaction={decorate}]	(2.7,-1) to[in=-45, out=45](2.7,1) ;
\draw[blue] (2.5,.5) node{$S_- $};
\draw[thick,red,postaction={decorate}]	(3,1) to[in=45, out=-45](3,-1) ;
\draw[red] (3.5,-1.2) node{$\DMS_0 $};

\end{scope}

\end{tikzpicture}
\caption{The sectors and directions of the Stokes matrices in Duval-Mitschi's ($\DMS_{0/1} $) and our notation ($S_{\pm} $). Note that the formal monodromy $M_\infty^- $ of Notation \ref{notation:formon} is associated to a loop around $\infty $ in counter-clockwise orientation in the figure above.}\label{fig:sectors}
\end{figure}
%\begin{equation}\label{eq:ourS}
%[ S_+, S_-]  = [
% \left(
% \begin{array}{c|c}
%1_{n-1} & z \\[.2cm] \hline
%  0 & 1
%\end{array}\right) \ , \ 
%  \left(
% \begin{array}{c|c}
%  \mbox{$\diag(e^{-2 \pi i \ubeta})$} & 0 \\[.2cm] \hline
%0 & e^{2\pi i \lambda}
%\end{array}\right) 
%\cdot
%  \left(
% \begin{array}{c|c}
%1_{n-1} & 0 \\[.2cm] \hline
%  \fre & 1
%\end{array}\right) 
%]
%\end{equation}
%with $\fre=(1,1,\ldots, 1) $ and $z=e^{-2\pi i \lambda} \cdot  {}^t\!(z_1, \ldots, z_{n-1}) \in \C^{n-1} $ with
%\[
%z_j = \frac{\chi_A(e^{-2\pi i \beta_j})}{e^{-2\pi i \beta_j} \cdot \chi'_B(e^{-2\pi i \beta_j})}
%\]

\subsubsection{Comparison of the Stokes matrices}

Our result gives the equivalence class $[S_+,S_-] $ with representatives in normal form as in \eqref{eq:ourS}. We now compare this to the one of Duval-Mitschi with the help of Proposition \ref{prop:xy}. 

Let us state Duval-Mitschi's result on $D_{n,n-1}(\umu;\unu) $ after adapting the parameters to our notation, i.e. replacing
\[
\unu = 1-\ualpha \text{ and } \umu = -\ubeta .
\]
Note that Duval-Mitschi use the term $\lamDM \defeq 1+\sum_{j=1}^{n-1} \mu_j - \sum_{j=1}^n \nu_j $ in their article, which compares as
\begin{equation}\label{eq:lamDM}
\lamDM=1+\sum_{j=1}^{n-1}(-\beta_j) - \sum_{j=1}^n (1-\alpha_j) = -\lambda + 2-n
\end{equation}
to our convention. 

\begin{theorem}[Duval-Mitschi, Th\'eor\`eme 5.1 (a) in \cite{DM}]
 With the notations and choices introduced above, the Stokes matrices $\DMS_0 $ and $\DMS_1 $ for the hypergeometric system $\opHypl{\ualpha}{\ubeta}{-1} $ for generic $(\ualpha,\ubeta) \in (\C^\times)^n \times (\C^\times)^{n-1} $ are
\begin{equation}\label{eq:SDM}
 \DMS_0  =  
 \left(
 \begin{array}{c|c}
1_{n-1} & 0 \\[.2cm] \hline
  v & 1
\end{array}\right) \text{ and }
 \DMS_1 =
  \left(
 \begin{array}{c|c}
1_{n-1}  & w \\[.2cm] \hline
  0 & 1
\end{array}\right)
\end{equation}
where the vectors $v, w $ have the entries
\[
v_j   \defeq 2\pi i \cdot \frac{\prod_{\ell=1, \ell\neq j}^{n-1} \Gamma(1-(\beta_j-\beta_\ell))}{\prod_{\ell=1}^n \Gamma(\alpha_\ell-\beta_j)} \ , \ 
 w_j  \defeq 2\pi i e^{\pi i(\lamDM-\beta_j)} \cdot \frac{\prod_{\ell=1,\ell\neq j}^{n-1} \Gamma(\beta_j-\beta_\ell)}{\prod_{\ell=1}^n \Gamma(1-(\beta_j-\alpha_\ell))} .
\]
\end{theorem}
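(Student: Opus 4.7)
The plan is to derive Duval-Mitschi's formulas from our Theorem \ref{thm:diagonal} via an explicit base change within the equivalence class $\Stmatr{0,\rglambda}$ of Definition \ref{def:equivrel}, combined with classical identities for the Gamma function. Since Duval-Mitschi's theorem and our Theorem \ref{thm:diagonal} describe the same local isomorphism class, the task reduces to producing an element of $\Delta_{0,\rglambda}\times\Delta_{0,\rglambda}$ (plus the separation of $M_\infty^-$) that carries one representative to the other, and then verifying a scalar identity in the entries.

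First, I would align the geometric setups. As recalled in Remark \ref{rem:distinct}, we have $\glambda=\rglambda=-1$, whence $\alpha=-i$, $\beta=-1$, so $H_\alpha$ is the upper half-plane and the Stokes transitions $S_\pm$ occur across $h_{\pm\beta}=\mp\mathbb{R}_{>0}$. Duval-Mitschi's sectors $\theta_0,\theta_1,\theta_2$ at infinity give the transitions $\DMS_0$ (across one real ray) and $\DMS_1$ (across the other), related to our $S_\pm$ up to the orientation illustrated in Figure \ref{fig:sectors}. The crucial difference is that Duval-Mitschi factor out the formal monodromy, so both $\DMS_0$ and $\DMS_1$ are unipotent, whereas our $S_-$ absorbs $M_\infty^-$; the fact that their formal monodromy around $0$ counter-clockwise equals our $M_\infty^-$ around $\infty$ in clockwise orientation is the essential bookkeeping step.

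Second, within $\Stmatr{0,\rglambda}$ I would seek $A,B\in\Delta_{0,\rglambda}$ with $B=\operatorname{diag}(b_1,\ldots,b_{n-1},c)$ such that $(AS_+B, AS_-B)$ reproduces the Duval-Mitschi pair once their formal monodromy is reinstated. Writing $S_-=M_\infty^- L$ with $L$ lower unipotent, the requirement that $AS_-B$ take the shape (formal monodromy)$\cdot$(lower unipotent) fixes $A$ in terms of $B$ via the commutation with $M_\infty^-$; substituting, the new lower-left row becomes $cb_j^{-1}\mathbb{e}_j$ and the new upper-right column a scaled $\mathbb{z}_j$. Matching with $\mathbb{v}_j$ and $\mathbb{w}_j$ reduces the comparison to the single identity
\[
\mathbb{z}_j \cdot \mathbb{v}_j = \mathbb{w}_j
\]
(up to a global constant independent of $j$, absorbed into the residual scalar freedom in $(b_j,c)$).

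Third, I would verify this identity. Using
\[
e^{-2\pi i a}-e^{-2\pi i b}=-2i\,e^{-\pi i(a+b)}\sin(\pi(a-b))
\]
on each factor of $\chi_A(e^{-2\pi i\beta_j})$ and $\chi'_B(e^{-2\pi i\beta_j})$, our entry becomes
\[
\mathbb{z}_j \;=\; 4\,e^{-\pi i\lambda-\pi i\beta_j}\cdot\frac{\prod_{\ell=1}^n\sin(\pi(\beta_j-\alpha_\ell))}{\prod_{\ell\neq j}\sin(\pi(\beta_j-\beta_\ell))}.
\]
On the Duval-Mitschi side, the reflection formula $\Gamma(z)\Gamma(1-z)=\pi/\sin(\pi z)$ applied to the $\beta$-pairs $\Gamma(\beta_j-\beta_\ell)$ and $\Gamma(1-(\beta_j-\beta_\ell))$, together with the shift $\Gamma(1+x)=x\Gamma(x)$ applied to the $\alpha$-factors $\Gamma(\alpha_\ell-\beta_j)$ and $\Gamma(1-(\beta_j-\alpha_\ell))=\Gamma(1+\alpha_\ell-\beta_j)$ (followed by a further reflection), converts both $\mathbb{v}_j$ and $\mathbb{w}_j$ individually into products of sines times explicit exponential factors. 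The relation $\lamDM=-\lambda+2-n$ from \eqref{eq:lamDM} translates the $e^{\pi i\lamDM}$ appearing in $\mathbb{w}_j$ into our $e^{-\pi i\lambda}$, and the sines of differences then match termwise.

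The main obstacle will be the careful bookkeeping on two fronts. Geometrically, one must pin down exactly which of $\DMS_0,\DMS_1$ corresponds, after orientation reversal, to our $S_+$ and which to our $S_-$, and how Duval-Mitschi's separated formal monodromy recomposes with ours. Algebraically, the $\Gamma$-argument pairs $(\alpha_\ell-\beta_j,\,1+\alpha_\ell-\beta_j)$ that arise in the $\alpha$-denominators of $\mathbb{v}_j\mathbb{w}_j$ are \emph{not} directly reflection pairs; they differ from reflection pairs by the Gamma shift $\Gamma(1+x)=x\Gamma(x)$, producing extra linear factors $(\alpha_\ell-\beta_j)$ that must eventually cancel against the $e^{-2\pi i\beta_j}$ denominator of $\mathbb{z}_j$ together with the numerical constants $(2\pi i)^2$ and $\pi^{n-2}$. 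Ensuring that no hidden sign or factor of $2\pi i$ is lost in this cancellation is the delicate, but ultimately routine, part of the argument.
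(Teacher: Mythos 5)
There is a genuine gap, and it is structural rather than computational. The statement you are asked to prove is Duval--Mitschi's Th\'eor\`eme 5.1(a), which is an \emph{external} result quoted from \cite{DM}: it asserts the precise matrices $\DMS_0,\DMS_1$ with the specific $\Gamma$-factor entries $\mathbb{v}_j,\mathbb{w}_j$ \emph{relative to their explicitly constructed bases of asymptotic solutions} $\Sigma_0,\Sigma_1,\Sigma_2$ on the sectors $\theta_j$. The present paper does not prove this theorem (it cites it); its proof in \cite{DM} goes through the construction of formal solutions and their sectorial lifts and a direct computation of the transition matrices. Your route --- starting from Theorem \ref{thm:diagonal} and moving inside the equivalence class of Definition \ref{def:equivrel} --- cannot recover that statement, because Theorem \ref{thm:diagonal} only determines the class $[S_+,S_-]$, and the individual entries $\mathbb{v}_j$, $\mathbb{w}_j$ are not invariants of that class: any rescaling $\mathbb{v}_j\mapsto c_j\mathbb{v}_j$, $\mathbb{w}_j\mapsto c_j^{-1}\mathbb{w}_j$ produces an equivalent pair, so from the class one can at best pin down the products $\mathbb{v}_j\mathbb{w}_j$. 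What your argument actually establishes is the \emph{comparison} statement $[\DMS_+,\DMS_-]=[S_+,S_-]$, i.e.\ the final Proposition of the paper --- which is proved there exactly along the lines you sketch (recompose the formal monodromy, pass to the normal form of Proposition \ref{prop:xy}, and verify a scalar identity via Euler's reflection formula) --- but not the quoted theorem itself.

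Even read as a proof of that comparison, your reduction step is off. Bringing Duval--Mitschi's recomposed pair to normal form by conjugating with $\mathrm{diag}(-\mathbb{v})$ turns the upper-right column into the entrywise products $-\mathbb{v}_j\mathbb{w}_j$, so the identity to verify is the \emph{product} identity $-\mathbb{v}_j\mathbb{w}_j=e^{-2\pi i\lambda}\mathbb{z}_j$, not $\mathbb{z}_j\cdot\mathbb{v}_j=\mathbb{w}_j$; the ratio $\mathbb{w}_j/\mathbb{v}_j$ is not invariant under the residual block-diagonal freedom. Moreover the qualifier ``up to a global constant independent of $j$'' is not permissible: by the uniqueness of the normal form (Proposition \ref{prop:xy}), once the lower-left row is normalized to $(1,\dots,1)$ and $M_\infty^-$ is fixed, the upper-right vector is determined exactly, so a nontrivial constant would mean the two classes are \emph{different}. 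The equality must therefore be checked on the nose (your intermediate sine expression for $\mathbb{z}_j$ also carries a sign slip in the exponent, $e^{\pi i(\lambda-\beta_j)}$ rather than $e^{-\pi i\lambda-\pi i\beta_j}$, which would matter in such an exact verification). In summary: your sketch, once these two points are repaired, reproduces the paper's comparison Proposition, but the Duval--Mitschi theorem as stated requires the analytic construction of \cite{DM} and cannot be deduced from Theorem \ref{thm:diagonal} alone.
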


In terms of Definition \ref{def:equivrel} and Propositon \ref{prop:StokesRH}, Duval-Mitschi's pair corresponds to the equivalence class (cp. Figure \ref{fig:sectors}):
\begin{multline*}
[\DMS_+,\DMS_-] \defeq [\DMS_1, M_\infty^- \cdot (\DMS_0)^{-1}] = \\
\left[
  \left(
 \begin{array}{c|c}
1_{n-1}  & w \\[.2cm] \hline
  0 & 1
\end{array}\right)
\ , \
  \left(
 \begin{array}{c|c}
  \mbox{$\diag(e^{-2 \pi i \ubeta})$} & 0 \\[.2cm] \hline
0 & e^{2\pi i \lambda}
\end{array}\right) 
\cdot
 \left(
 \begin{array}{c|c}
1_{n-1} & 0 \\[.2cm] \hline
  -v & 1
\end{array}\right)\right]
\end{multline*}

Executing the base change (see Definition \ref{def:equivrel}) given by
\[
\left( 
\begin{array}{c|c}
   \diag(-v) & 0 \\ \hline
0 & 1
\end{array}\right) \cdot \DMS_{\pm} \cdot 
\left( 
\begin{array}{c|c}
\diag(-v)^{-1} &0\\ \hline
0 & 1
\end{array}\right)
\]
we obtain the equivalent description in normal form:
\begin{equation}\label{eq:DMSs}
[\DMS_+,\DMS_-] = 
\left[
  \left(
 \begin{array}{c|c}
1_{n-1}  & -v \cdot w \\[.2cm] \hline
  0 & 1
\end{array}\right)
\ , \ 
M_\infty^-
\cdot
 \left(
 \begin{array}{c|c}
1_{n-1} & 0 \\[.2cm] \hline
  \fre & 1
\end{array}\right)\right]
\end{equation}

Due to Proposition \ref{prop:xy}, we know that the pairs \eqref{eq:ourS} and \eqref{eq:DMSs} are equivalent if and only if
\[
-v_j w_j =  e^{-2\pi i \lambda}z_j 
\]
for all $j=1, \ldots, n-1 $. Since the factors are of the same shape for each $j $, it suffices to  consider $j=1 $. With \eqref{eq:chiA} and \eqref{eq:chiB} we get
\begin{multline*}
-e^{-2\pi i \lambda} z_1 = -e^{2\pi i (\beta_1-\lambda)} \cdot
\frac{\prod_{\ell=1}^n (e^{-2\pi i \beta_1}-e^{-2\pi i\alpha_\ell})}{\prod_{\ell=2}^{n-1}(e^{-2\pi i \beta_1}-e^{-2\pi i \beta_\ell})} =\\
-e^{2\pi i(\beta_1-\lambda)} \cdot
\frac{e^{-2\pi i \sum_{\ell=1}^n \alpha_\ell} \cdot e^{-2\pi i n \beta_1} \cdot
\prod_{\ell=1}^n (e^{2\pi i \alpha_\ell}-e^{2\pi i\beta_1})
}{
e^{-2\pi i \sum_{\ell=2}^{n-1} \beta_\ell} \cdot e^{-2\pi i (n-2) \beta_1} \cdot
\prod_{\ell=2}^{n-1}(e^{2\pi i \beta_\ell}-e^{2\pi i \beta_1})} =\\
-e^{-4\pi i \beta_1} \cdot
\frac{\prod_{\ell=1}^n (e^{2\pi i \alpha_\ell}-e^{2\pi i\beta_1})}{\prod_{\ell=2}^{n-1}(e^{2\pi i \beta_\ell}-e^{2\pi i \beta_1})}.
\end{multline*}

On the other hand side, using Euler's formula
\[
\Gamma(x)\Gamma(1-x) = \frac{\pi}{\sin(\pi x)} = \frac{2\pi i e^{\pi i x}}{e^{2\pi i x}-1}
\] 
and \eqref{eq:lamDM}, we obtain
\begin{multline*}
v_1 w_1=(2\pi i)^2 \cdot e^{\pi i (\lamDM-\beta_1)} \cdot
\frac{\prod_{\ell=2}^{n-1} \Gamma(\beta_1-\beta_\ell) \Gamma(1-(\beta_1-\beta_\ell))}{
\prod_{\ell=1}^n \Gamma(1-(\alpha_\ell-\beta_1))\Gamma(\alpha_\ell-\beta_1)} = \\
(2\pi i)^2 \cdot e^{\pi i(-\lambda+2-n-\beta_1)} \cdot
\prod_{\ell=2}^{n-1} \frac{2\pi i e^{\pi i (\beta_1-\beta_\ell)}}{e^{2\pi i (\beta_1-\beta_\ell)}-1}
\cdot
\prod_{\ell=1}^{n} \frac{e^{2\pi i (\alpha_\ell-\beta_1)}-1}{2\pi i e^{\pi i (\alpha_\ell-\beta_1)}}=\\
e^{\pi i(-\lambda+2-n-\beta_1)} \cdot
\prod_{\ell=2}^{n-1} \frac{e^{\pi i (\beta_1+\beta_\ell)}}{e^{2\pi i \beta_1}-e^{2\pi i\beta_\ell}}
\cdot
\prod_{\ell=1}^{n} \frac{e^{2\pi i \alpha_\ell}-e^{2\pi i \beta_1}}{e^{\pi i (\alpha_\ell+\beta_1)}}=
e^{\pi i \Lambda} \cdot \frac{\prod_{\ell=1}^n (e^{2\pi i \alpha_\ell}-e^{2\pi i\beta_1})}{\prod_{\ell=2}^{n-1}(e^{2\pi i \beta_1}-e^{2\pi i \beta_\ell})}
\end{multline*}
with
\[
\Lambda=\overbrace{-1+\sum_{\ell=1}^n\alpha_\ell - \sum_{\ell=1}^{n-1}\beta_\ell}^{-\lambda}
+2-n-\beta_1+(n-2)\beta_1+\sum_{\ell=2}^{n-1}\beta_\ell -\sum_{\ell=1}^n\alpha_\ell -n \beta_1 = -4\beta_1-(n-1)\\
\]
leading to the same result as above. In conclusion, we have proven:
\begin{proposition}
The description of Duval-Mitschi and the one of Theorem \ref{thm:diagonal} are equivalent:
\[
[\DMS_+,\DMS_-]= [S_+, S_-].
\]
\end{proposition}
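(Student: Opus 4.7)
The strategy is to exploit the uniqueness statement in Proposition \ref{prop:xy}: since the normal form of a class $[S_+, S_-] \in \Stmatr{n-1, 1}$ is uniquely determined, it suffices to transform Duval-Mitschi's pair \eqref{eq:SDM} into normal form and then compare the resulting single-entry data with that of Theorem \ref{thm:diagonal} given in \eqref{eq:ourS}.

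The plan is as follows. First, I would recall from the geometric comparison in Section \ref{subsec:equrel} that, after accounting for the orientations and the identification of sectors pictured in Figure \ref{fig:sectors}, Duval-Mitschi's pair $(\DMS_0, \DMS_1)$ corresponds to the class $[\DMS_1, M_\infty^- \cdot \DMS_0^{-1}]$. Second, applying the diagonal base change $\mathrm{diag}(-\mathbb{v}) \oplus 1$ as recorded in \eqref{eq:DMSs} puts this class into normal form, with diagonal off-blocks $\mathbb{e} = (1, \ldots, 1)$ and $-\mathbb{v} \cdot \mathbb{w}$ (componentwise product). Hence, by Proposition \ref{prop:xy}, the equality $[\DMS_+, \DMS_-] = [S_+, S_-]$ is equivalent to the $n-1$ scalar identities
\[
-\mathbb{v}_j \mathbb{w}_j \;=\; e^{-2\pi i \lambda}\, \mathbb{z}_j \quad \text{for } j = 1, \ldots, n-1.
\]

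By symmetry in $j$, it suffices to verify this for $j=1$. On the right-hand side, I would substitute the explicit formula for $\mathbb{z}_1$ from Theorem \ref{thm:diagonal} and simplify using \eqref{eq:chiA} and \eqref{eq:chiB}, factoring out $e^{-2\pi i \alpha_\ell}$ from each numerator factor and $e^{-2\pi i \beta_\ell}$ from each denominator factor; this rewrites $-e^{-2\pi i \lambda}\mathbb{z}_1$ as an explicit exponential prefactor times $\prod_\ell (e^{2\pi i \alpha_\ell} - e^{2\pi i \beta_1}) / \prod_{\ell \ge 2}(e^{2\pi i \beta_\ell} - e^{2\pi i \beta_1})$. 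On the left-hand side, I would expand $\mathbb{v}_1 \mathbb{w}_1$ and use Euler's reflection formula
\[
\Gamma(x) \Gamma(1-x) \;=\; \frac{\pi}{\sin(\pi x)} \;=\; \frac{2\pi i\, e^{\pi i x}}{e^{2\pi i x} - 1}
\]
to convert every $\Gamma$-product into an exponential-times-rational expression, yielding the same quotient up to an exponential factor of the form $e^{\pi i \Lambda}$.

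The main (and only) obstacle is the bookkeeping of the exponent $\Lambda$: one must accumulate the contributions from $\lamDM$, the reflection factors $e^{\pi i(\beta_1-\beta_\ell)}$ and $e^{\pi i(\alpha_\ell-\beta_1)}$, the explicit prefactor $e^{\pi i(\lamDM - \beta_1)}$ in the formula for $\mathbb{w}_1$, and the $e^{2\pi i (\beta_1 - \lambda)}$ produced on the right-hand side, and then use the substitution $\lamDM = -\lambda + 2 - n$ from \eqref{eq:lamDM} to check that $\Lambda$ reduces to an integer of the correct parity so that $e^{\pi i \Lambda} = \pm 1$ matches the sign on the right. Once the exponent bookkeeping is carried out as in the displayed computation just above the statement, the equality of rational parts is immediate and the proposition follows.
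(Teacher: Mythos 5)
Your proposal follows essentially the same route as the paper: invoke the uniqueness of the normal form from Proposition \ref{prop:xy}, rewrite Duval--Mitschi's pair as $[\DMS_1, M_\infty^- \cdot \DMS_0^{-1}]$, base-change by $\mathrm{diag}(-\mathbb{v})\oplus 1$ into normal form as in \eqref{eq:DMSs}, and reduce to the scalar identity $-\mathbb{v}_j\mathbb{w}_j = e^{-2\pi i\lambda}\mathbb{z}_j$, checked via Euler's reflection formula. One small imprecision in your sketch: the accumulated exponent $\Lambda$ does not ``reduce to an integer''---in the paper it works out to $-4\beta_1-(n-1)$, so the $\beta_1$-dependent piece $e^{-4\pi i\beta_1}$ matches the corresponding prefactor on the $\mathbb{z}$-side, while the integer piece $e^{-\pi i(n-1)}=(-1)^{n-1}$ must be combined with the sign $(-1)^{n-2}$ that comes from flipping the denominator quotient $\prod_{\ell\ge 2}(e^{2\pi i\beta_1}-e^{2\pi i\beta_\ell})$ into $\prod_{\ell\ge 2}(e^{2\pi i\beta_\ell}-e^{2\pi i\beta_1})$ before the two sides agree.
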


\section{Extension to the ramified case} \label{sec:ramified}

\subsection{The setting}

In this section we extend our results to the general case with ramification -- still non-resonant. We will mainly focus on the necessary changes in comparison to the unramified case. The results will have a more complicated shape. We compute the Stokes matrices (of the de-ramified hypergeometric equation) in a companion representation and deduce the rationality statements similar to section \ref{sec:cyclo}.

We consider the hypergeometric module $\Hypl{\ualpha}{\ubeta}{\glambda} $ with $\ualpha=(\alpha_1, \ldots, \alpha_n) $, $\ubeta=(\beta_1, \ldots, \beta_m) $ with $n>m $ and we let $d:=n-m $ be the ramification degree to be applied. We impose the following assumption:

\begin{assumption}\label{ass:genericramified}
We assume $\uab $ are non-resonant and $d\alpha_i \not\in \Z $ for all $i=1, \ldots, n $. Furthermore we assume that $\Hypl{\ualpha}{\ubeta}{\glambda} $ is not Kummer induced -- see \cite[Kummer Recognition Lemma 3.5.6]{katz}.
\end{assumption}

From Katz's Theorem we know that under these assumptions we have
\[
 \jint[d]^\ast \Hypabl \simeq \Fou{\big(\jint [d]^\ast \Hypl{\frac{1}{d}, \frac{2}{d}, \ldots, \frac{d}{d}, - \ubeta}{-\ualpha}{(d^d)/\glambda} \big)}
 \]
 and furthermore this module is irreducible. As usual (cp. \cite{DM}), we will compute the Stokes matrices of the \emph{de-ramified} module $\jint[d]^\ast \Hypabl $ on the affine space $\Af^1_\tau $ with the coordinate $\tau $ such that $\tau^d=z $. 
 
\subsection{Quiver of the perverse sheaf}

The first task is to understand the quiver associated to the solution perverse sheaf $G $ associated to the regular singular module
$\jint [d]^\ast \Hypl{\frac{1}{d}, \frac{2}{d}, \ldots, \frac{d}{d}, - \ubeta}{-\ualpha}{(d^d)/\glambda} $.
Let us write $\mathbb{0} \in \Af^1_\tau $ from now on in order to distinguish this point from the index $k=0 $. We know from Katz's Theorem that $G $ is an irreducible perverse sheaf with singular support on $\widetilde{\Sigma} :=[d]^{-1}(\{0,\frac{d^d}{\glambda}\}) = \{ \bO \} \cup \frac{d}{\sqrt[d]{\glambda}} \cdot \mu_d $ where $\sqrt[d]{\glambda} $ is any chosen $d $-th root and $\mu_d $ is the group of $d$-th roots of unity.  Let us write $\rglambda:=d/\sqrt[d]{\glambda} $ and let $\zeta $ be the primitive element $\zeta:=\exp(2\pi i/d) \in \mu_d$. 

We choose the direction/orientation as $\fra:=\rglambda \cdot e^{\frac{\pi i}{2}-i \varepsilon} $ and $\frb:=\rglambda^{-1} e^{-i \varepsilon} $ for a small $\varepsilon>0 $ -- see Figure \ref{fig:zetas}. These induce the total ordering $<_\frb $ on $\widetilde{\Sigma} $. For any $d \ge 2 $, we have
\begin{equation}\label{eq:orderram}
\rglambda >_\frb \rglambda \zeta^{-1} >_\frb \rglambda \zeta >_\frb \rglambda \zeta^{-2} >_\frb \rglambda \zeta^2 >_\frb \ldots >_\frb \rglambda \zeta^{\lfloor \frac{d}{2} \rfloor} \ .
\end{equation}
The position of the value $\mathbb{0} $ in this ordering depends on $d \text{ mod } 4 $:
\begin{align}
 d=4e  & \Longrightarrow \rglambda \zeta^{-e} >_\frb \mathbb{0} >_\frb \rglambda \zeta^e \\
 d=4e+1,2 \text{ or } 3 & \Longrightarrow  \rglambda \zeta^e >_\frb  \mathbb{0} >_\frb \rglambda \zeta^{-(e+1)} .
\end{align}
Let $\nu \in \{\lfloor \frac{d}{2} \rfloor, \ldots, d-1 \} $ be the exponent such that $\sigma \zeta^\nu <_\frb \mathbb{0} <_\frb \sigma\zeta^{\nu+2} $, e.g. if $d=4e+1,2,3 \not\equiv 0 \text{ mod } 4 $, then $\nu=d-e-1 $.

\begin{figure}\centering
\begin{tikzpicture}[scale=.7,
	background rectangle/.style={draw=black,dashed,fill=white}, 
	show background rectangle]
\begin{scope}[decoration={
    markings,
    mark=at position .7 with {\arrow{>}}}
    ]
\clip (-1.2,-2.2) rectangle (3.1, 1.8); 
\node[inner sep=0pt] (zeta9) at (.98,-.7)
    {\includegraphics[scale=1.2]{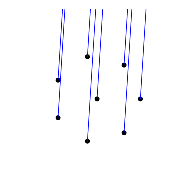}};
 \end{scope}
\end{tikzpicture}
\qquad
\begin{tikzpicture}[scale=.7,
	background rectangle/.style={draw=black,dashed,fill=white}, 
	show background rectangle]
\begin{scope}[decoration={
    markings,
    mark=at position .7 with {\arrow{>}}}
    ]
\clip (-1.2,-2.2) rectangle (3.1, 1.8); 
\node[inner sep=0pt] (zeta9) at (.98,-.7)
    {\includegraphics[scale=1.2]{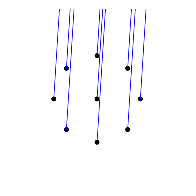}};
 \end{scope}
\end{tikzpicture}
\qquad
\begin{tikzpicture}[scale=.7,
	background rectangle/.style={draw=black,dashed,fill=white}, 
	show background rectangle]
\begin{scope}[decoration={
    markings,
    mark=at position .7 with {\arrow{>}}}
    ]
\clip (-1.2,-2.2) rectangle (3.1, 1.8); 
\node[inner sep=0pt] (zeta9) at (.98,-.7)
    {\includegraphics[scale=1.2]{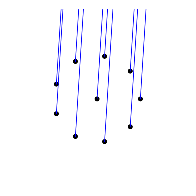}};
 \end{scope}
\end{tikzpicture}
\caption{The lines in direction $\fra $ inducing the ordering of the singular values $\widetilde{\Sigma}=\{\bO\} \cup \rglambda \mu_d $. In the pictures, we chose $\rglambda=1 $ for convenience. From left to right: $d=7,8,9 $.}\label{fig:zetas}
\end{figure}

The quiver associated to $G $ has the form:
 \[
 \xymatrix{
{\Phi_\bO(G)} \ar@<-.5ex>[r]_(.5){v_\bO} & {\Psi(G)} \big( \ar@<-.5ex>[l]_(.5){u_\bO} \ar@<.5ex>[r]^(.35){u_k}  & 
{\Phi_{\sigma \zeta^k}(G)} \big)_{k=0, \ldots, d-1} \ar@<.5ex>[l]^(.65){v_k} 
}
 \]
Since $G $ is irreducible, it follows from Corollary \ref{cor:middleperv} that the latter is of the special form
  \begin{equation}\label{eq:quivtilde}
 \xymatrix{
{\im(1-\widetilde{T}_\bO)} \ar@<-.5ex>[r]_(.6){\iota_\bO} & {\Psi(G)} \big( \ar@<-.5ex>[l]_(.4){1-\widetilde{T}_\bO} \ar@<.5ex>[r]^(.3){1-\widetilde{T}_k}  & 
{\im(1-\widetilde{T}_k)} \big)_{k=0, \ldots, d-1}  \ar@<.5ex>[l]^(.7){\iota_k}}
 \end{equation}
with the monodromy operators $\widetilde{T}_\bO $ around $\bO $ and $\widetilde{T}_k $ around $\rglambda \zeta^k $ associated to the following choice of generators of the fundamental group fixed by $\fra $ and $\frb $ (cp. \cite[Lemma 4.11]{DHMS}): first, we choose a base-point $\widetilde{b} $ in $\Af^1 \smallsetminus \ell_{\widetilde{\Sigma}} $, say $\widetilde{b} $ in the sector given by $\rglambda \zeta^\nu $ and $\rglambda \zeta^{\nu+1} $ with absolute value $|\widetilde{b}|>|\rglambda | $. For each $z=\rglambda \zeta^k \in \widetilde{\Sigma} $, we denote by $\widetilde{g}_k $ a path inside $\Af^1 \smallsetminus \ell_{\widetilde{\Sigma}} $ from $\widetilde{b} $ to a nearby point of $z $, followed by a counter-clockwise roation around $z $ and returning back to $\widetilde{b} $ inside $\Af^1 \smallsetminus \ell_{\widetilde{\Sigma}} $ -- see Figure \ref{fig:fundgroupram}. The same construction gives a path $\widetilde{g}_\bO $ around $\bO $. Let $b=\widetilde{b}^d $.

%In order to determine the monodromy operators $\widetilde{T}_0 $ and $\widetilde{T}'_k $, let us first consider the following free generators of the fundamental group 
%\[
%\pi_1(\Af^1\smallsetminus \widetilde{\Sigma},\widetilde{b}) = \langle \widetilde{g}_0, \widetilde{g}'_k \mid k=0,\ldots, d-1 \rangle
%\]
%as sketched in Figure \ref{fig:fundgroupram} for some base-point $\widetilde{b} $ as in the Figure.

\begin{figure}\centering
\begin{tikzpicture}[scale=1,
	background rectangle/.style={draw=black,dashed,fill=white}, 
	show background rectangle]
\coordinate (C1) at (2,1);
\coordinate (C2) at (.5,0); %vorher (0,
\coordinate (C3) at (-.5,0);  %vorher (.5,
\coordinate (C4) at (-1.3,0);  %vorher (.5,
\coordinate (C5) at (3,0);  %vorher (.5,

\begin{scope}[decoration={
    markings,
    mark=at position .7 with {\arrow{>}}}
    ]
\clip (-1.2,-2.2) rectangle (3.1, 1.8); 

\node[inner sep=0pt] (fundgr) at (.98,-.1)
    {\includegraphics[scale=1.2]{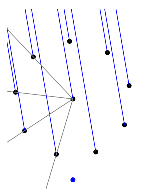}};
    
\draw[red,postaction={decorate}] (.2,.75) circle (4pt);
\draw[red,postaction={decorate}] (1,-.1) circle (4pt);
%\draw[blue,postaction={decorate}] (-.25,1.18) circle (5pt);

\node at (.9,-.4) {$\bO $};
%\node at (.7,0) {\footnotesize $\widetilde{g}_0 $};
%\node at (1.2,1) {\footnotesize $\widetilde{g}'_k $};
\node at (-.2,.8) {\footnotesize $\rglambda \zeta^k $};
\node at (.8,-1.35) {\footnotesize $\rglambda \zeta^\nu $};
\node at (1.8,-1.35) {\footnotesize $\rglambda \zeta^{\nu+1} $};

\node at (1.3,-2) {$\widetilde{b}$};
%\node at (1.3,-2) {$\widetilde{h}_k$};
\node at (2.5,.1) {$\rglambda$};

%\coordinate (C2) at (1.2,-.35); 
\coordinate (C2) at (1,-1.7); 
\coordinate (C4) at (.2,.6);  
\coordinate (C1) at (.8,-2);  

\draw [red,postaction={decorate}] (C2) 
  .. controls ++(90:1) and ++(-90:.5) .. (1,-.25); %(C1)
\draw [red,postaction={decorate}] (C2) 
  .. controls ++(190:1) and ++(-90:.5) .. (C4); %(C1)
% .. controls ++(190:1) and ++(90:-2) .. (C4);
%  .. controls ++(90:1) and ++(115:1) .. (C2);

 \end{scope}
\end{tikzpicture}
\qquad
\begin{tikzpicture}[scale=1,
	background rectangle/.style={draw=black,dashed,fill=white}, 
	show background rectangle]
\coordinate (C1) at (2,1);
\coordinate (C2) at (.5,0); %vorher (0,
\coordinate (C3) at (-.5,0);  %vorher (.5,
\coordinate (C4) at (-1.3,0);  %vorher (.5,
\coordinate (C5) at (3,0);  %vorher (.5,

\begin{scope}[decoration={
    markings,
    mark=at position .7 with {\arrow{>}}}
    ]
\clip (-1.2,-2.2) rectangle (3.1, 1.8); 

%\node[inner sep=0pt] (fundgr) at (.98,-.1)
%    {\includegraphics[scale=.4]{fundgrram4.png}};
\node[inner sep=0pt] (fundgr) at (.98,-.1)
    {\includegraphics[scale=1.2]{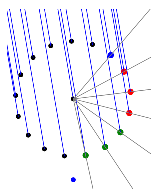}};
    
%\draw[red,postaction={decorate}] (1,-.13) circle (5pt);
%\draw[blue,postaction={decorate}] (-.25,1.18) circle (5pt);

\node at (1.1,-.3) {$\bO $};
\node at (1.7,.65) {\footnotesize $z $};
\node at (2.2,-.05) {$\rglambda$}; %neu
\node at (1,-1.7) {$\widetilde{b}$};
 \end{scope}
\end{tikzpicture}
\caption{Left: The choice of paths $\widetilde{g}_\bO $ and $\widetilde{g}_k $ inducing the monodromies of the quiver. The grey lines help to understand $[d]_\ast \widetilde{g}_k $. Right: An example for the values of $o(k)=3 $ -- the green points -- and $i(k)=3 $ -- the red points -- for $z=\rglambda \zeta^k $.}\label{fig:fundgroupram}
\end{figure}

Let $L $ be the local system associated to $\Hypl{\frac{1}{d}, \frac{2}{d}, \ldots, \frac{d}{d}, - \ubeta}{-\ualpha}{(d^d)/\glambda} $ on $\Af^1 \smallsetminus \{ 0, d^d/\glambda \} $ and $\rho_L:\pi_1(\Af^1 \smallsetminus \{0, d^d/\glambda \}, b)\to \mathrm{GL}_n(\C) $ the corresponding representation of the fundamental group. The local system $\widetilde{L}=[d]^{\ast}L $ with $G|_{\Af^1 \smallsetminus \widetilde{\Sigma}}=\widetilde{L}[1] $ is associated to the representation of the fundamental group
\[
\rho_{\widetilde{L}}: \pi_1(\Af^1 \smallsetminus \widetilde{\Sigma}, \widetilde{b}) \stackrel{[d]_\ast}{\to} 
\pi_1(\Af^1 \smallsetminus \{0, \frac{d^d}{\glambda} \}, b) \stackrel{\rho_L}{\to} \mathrm{GL}_n(\C) \ .
\]
We use the notation $\pi_1(\Af^1 \smallsetminus \{ 0, d^d/\glambda \}, b)=\langle g_0, g_\rglambda \rangle $ as in section \ref{subsec:mdrmy}, such that the monodromies of $L $ are given by $T_0 $ and $T_\rglambda $ as in Corollary \ref{cor:mdrmy}. The monodromy operators to be used in the quiver \eqref{eq:quivtilde} are $\widetilde{T}_\bO = \rho_{\widetilde{L}}(\widetilde{g}_\bO) $ and $\widetilde{T}_k = \rho_{\widetilde{L}}(\widetilde{g}_k) $. Obviously, we have $[d]_\ast \widetilde{g}_\bO = g_0^d $ and due to Corollary \ref{cor:mdrmy}, we deduce that
\begin{equation}
 \widetilde{T}_\bO = T_0^d = \Co{\ugamma}^d \ .
\end{equation}
In order to understand $\widetilde{T}_k $, we introduce the following functions counting the number of sectors the paths $\widetilde{g}_k $ cross outside $o(k) $ or inside $i(k) $ the circle of radius $\rglambda $. For $k \in \{0, \ldots, d-1 \} $, we define
\[
o(k) := \left\{
\begin{array}{ll}
 \# \{ y \in \rglambda \mu_d \mid \bO<_\frb y <_\frb \rglambda\zeta^k \text{ and } \Im(\rglambda^{-1}y)<0 \} & \text{ if $\bO<_\frb \rglambda\zeta^k$},\\
 -\# \{ y \in \rglambda \mu_d \mid \rglambda\zeta^k<_\frb y <_\frb \bO  \text{ and } \Im(\rglambda^{-1}y)<0 \} & \text{ if $\rglambda\zeta^k<_\frb \bO$},\\
\end{array} \right.
\]
and
\[
i(k) := \left\{
\begin{array}{ll}
0 & \text{ for $\Im(\zeta^k)\le 0 $},\\
k+d-(\nu+1)-o(k) \ge 0 & \text{ for $\Im(\zeta^k)>0 $ and $\rglambda \zeta^k>_\frb \bO$},\\
-(\nu-k)+o(k) \le 0  & \text{ for $\Im(\zeta^k)>0 $ and $\rglambda \zeta^k<_\frb \bO$}.\\
\end{array} \right.
\]
cp. Figure \ref{fig:fundgroupram}. With these notations, we see that for each $k=0, \ldots, d-1 $, we get
\[
[d]_\ast(\widetilde{g}_k) = \big((g_1g_0)^{o(k)} \cdot g_0^{i(k)} \big) \cdot g_1 \cdot \big((g_1g_0)^{o(k)} \cdot g_0^{i(k)} \big)^{-1} \ .
\]
Let us define $S_k \in \mathrm{GL}_n(\C) $ by (recall that $\rho_L $ is an anti-homomorphism):
\begin{equation}\label{eq:Sz}
S_k := \big( \rho_L( (g_1g_0)^{o(k)} \cdot g_0^{i(k)} ) \big)^{-1}= \big( \Co{\ugamma}^{i(k)} \cdot ( \Co{\ugamma} \Co{\ueta}\Co{\ugamma}^{-1})^{o(k)} \big)^{-1}\ .
\end{equation}
Then
\[
\widetilde{T}_k = \rho_L([d]_\ast \widetilde{g}_k) = S_k \cdot \Co{\ueta}\Co{\ugamma}^{-1} \cdot S_k^{-1} \ .
\]

\begin{lemma}
Let us write $\ugamma := ( \frac{1}{d}, \frac{2}{d}, \ldots, \frac{d}{d}, - \ubeta ) $ and $\ueta:= -\ualpha $. Under Assumption \ref{ass:genericramified}, the quiver of the irreducible perverse sheaf associated to the regular singular module $\jint [d]^\ast \Hypl{\frac{1}{d}, \frac{2}{d}, \ldots, \frac{d}{d}, - \ubeta}{-\ualpha}{(d^d)/\glambda} $ is (up to isomorphism) given by
  \begin{equation}\label{eq:quiverram1}
  \xymatrix@C2.5cm{
 {\im(1-\Co{\ugamma}^d)} \ar@<-.5ex>[r]_(.6){\iota_\bO}  & {\C^n} \big( \ar@<-.5ex>[l]_(.4){1-\Co{\ugamma}^d} \ar@<.5ex>[r]^(.33){1-S_k\Co{\ueta}\Co{\ugamma}^{-1}S_k^{-1}}  & 
{\im \big( S_k(\Co{\ugamma}-\Co{\ueta}) \big)} \big)_{k=0, \ldots, d-1} \ar@<.5ex>[l]^(.7){\iota_k}
} 
\end{equation}
where $\iota_\bO $ and $\iota_k $ are the inclusions and for each $k=0, \ldots, d-1 $, the isomorphism $S_k $ is defined by \eqref{eq:Sz}.
\end{lemma}

Let us use the same notation $\chi_C $ and $\chi_E $ as in \eqref{eq:defchiC} and \eqref{eq:defchiE}. In order to give a better understanding of the quiver \eqref{eq:quiverram1}, let us remark that the eigenvalues of $\Co{\ugamma} $ are exactly the values $\exp(2\pi i \gamma_j) $, and that for each individual eigenvalue, the eigenspace is one-dimensional -- there is exactly one Jordan block per eigenvalue. Consequently, all $d $-th roots of unity $\mu_d $ are eigenvalues of $\Co{\ugamma} $ each with one-dimensional eigenspace in this case. Then $\Co{\ugamma}^d $ has $1 $ as eigenvalue with dimension of the eigenspace being $\dim\ker (1-\Co{\ugamma}^d)=d $. It follows that
\begin{equation}\label{eq:dimimcod}
\dim \im (1-\Co{\ugamma}^d) = n-d=m \ .
\end{equation}
The matrix $1-\Co{\ugamma}^d $ has the following shape:
\begin{equation}\label{eq:Cogammad}
 1-\Co{\ugamma}^d = \big( 
 e_1-e_{d+1}, e_2-e_{d+2}, \ldots, e_m-e_n, D \big) \in \mathrm{Hom}(\C^n,\C^n),
%   \left(
% \begin{array}{cccc|ccc}
% 1&&&& \\
% &1&&&&\hspace*{.2cm} \mbox{\Large $D $}& \hspace*{.3cm} \\
% &&\ddots&&\\ \hline
% -1&&&&\\
% &-1&&&\\
% && \ddots&&&\hspace*{.2cm} \mbox{\Large $D' $}& \hspace*{.3cm} \\
% &&&-1&
%\end{array}\right) 
\end{equation}
with $e_j $ the standard basis vectors in $\C^n $ and $D $ an $n \times m $-matrix. Note that the entries of $D $ are polynomial expressions in the coefficients of the characteristic polynomial $\chi_C $ of $\Co{\ugamma} $. We see that $\text{span}(e_1-e_{d+1}, e_2-e_{d+2}, \ldots, e_m-e_n) \subset \im (1-\Co{\ugamma}^d)$ and due to \eqref{eq:dimimcod}, equality holds. Hence we obtain an isomorphism
\begin{equation}\label{eq:vi0ram}
\vi_\bO:\C^m \stackrel{\cong}{\to} \im(1-\Co{\ugamma}^d) \ , \ e_j \mapsto e_j-e_{d+j} \ .
\end{equation}

As for the right hand side of the quiver \eqref{eq:quiverram1}, let us fix a $k\in \{ 0, \ldots, d-1 \} $. Recall that we have an isomorphism \eqref{eq:vi1}
\begin{equation}\label{eq:visigram}
\vi:\C \isoto \im(\Co{\ugamma}-\Co{\ueta}) \ , \ 1 \mapsto {}^t(E_n-C_n, E_{n-1}-C_{n-1}, \ldots, E_1-C_1) \ .
\end{equation}
We can understand the morphism $1-S_k\Co{\ueta}\Co{\ugamma}^{-1}S_k^{-1}=S_k(\Co{\ugamma}-\Co{\ueta})\Co{\ugamma}^{-1}S_k^{-1} $ as follows:
\begin{equation}\label{eq:Cougammak}
 \xymatrix@C2cm{
 \C^n \ar[r]^{\Co{\ugamma}^{-1}S_k^{-1}} & \C^n \ar[r]^(.3){\Co{\ugamma}-\Co{\ueta}} & \im(\Co{\ugamma}-\Co{\ueta}) \ar[r]^(.45){S_k}_(.45){\cong} & \im( S_k(\Co{\ugamma}-\Co{\ueta}))\\
 \C^n \ar@{=}[u] \ar[rr]_{U'_k} && \C \ar[u]_{\cong}^{\vi} \ar[ur]^(.3){\cong}_{S_k \vi} 
 }
\end{equation}
by defining
\begin{equation}\label{eq:Usk}
 U'_k := (0 \, 0 \, \ldots \, 0 \, 1) \cdot \Co{\ugamma}^{-1}S_k^{-1} \in \mathrm{Hom}(\C^n,\C) \ .
\end{equation}
We deduce that $\vi_\bO $ and $S_k\vi $ for $k=0, \ldots, d-1 $ induce an isomorphism between the quiver \eqref{eq:quiverram1} and
%\begin{equation}\label{eq:quiverram2}
%   \xymatrix@C2.5cm{
%\C^m \ar@<-.5ex>[r]_(.5){V_0}  & {\C^n} \big( \ar@<-.5ex>[l]_(.5){U_0} \ar@<.5ex>[r]^(.5){U'_k}  & 
%\C \big)_{k=0, \ldots, d-1} \ar@<.5ex>[l]^(.5){V'_k}
%} 
%\end{equation}
%with
\begin{align}\label{eq:quiverram2}
  & \xymatrix@C2.5cm{
\C^m \ar@<-.5ex>[r]_(.5){V_0}  & {\C^n} \big( \ar@<-.5ex>[l]_(.5){U_0} \ar@<.5ex>[r]^(.5){U'_k}  & 
\C \big)_{k=0, \ldots, d-1} \ar@<.5ex>[l]^(.5){V'_k}
} \text{ with}
\\ \notag
 &U'_\bO  = \vi_\bO^{-1} (1-\Co{\ugamma}^d) \in \mathrm{Hom}(\C^n,\C^m),\\ \notag
 & V'_\bO  = \vi_\bO=( e_1-e_d , e_2-e_{d+2}, \ldots, e_m-e_n) \in \mathrm{Hom}(\C^m, \C^n), \\ \notag
& U'_k = (0 \, 0 \ldots \, 0 \, 1) \cdot \Co{\ugamma}^{-1}S_k^{-1} \in \mathrm{Hom}(\C^n,\C), \\ \notag
& V'_k = \Co{\ugamma}^{-1}S_k^{-1} \cdot {}^t(E_n-C_n, \ldots, E_1-C_1) \in \mathrm{Hom}(\C,\C^n) \ .
\end{align}
Note that the first $m $ columns of $U'_\bO $ are the $m \times m $-identity matrix. In summary, we have obtained the following
\begin{proposition}\label{prop:quiverram}
The quiver associated to $\jint [d]^\ast \Hypl{\frac{1}{d}, \frac{2}{d}, \ldots, \frac{d}{d}, - \ubeta}{-\ualpha}{(d^d)/\glambda} $ is (up to isomorphism) given by the quiver  \eqref{eq:quiverram2}.
\end{proposition}

\subsection{The Stokes matrices}

In order to describe the Stokes matrices, let us write $\Phi_\bO := \C^m $ and $\Phi_k := \C $ for the vector spaces in the quiver \eqref{eq:quiverram2}, $k=0, \ldots, d-1 $. 

\begin{remark}
In \cite{DM} a different presentation of the Stokes information is given. The affine line $\Af^1_\tau $ is covered by smaller subsectors each of them  containing only one Stokes-direction. This is a usual procedure since these sectors allow to compute the asymptotic developments of formal solutions with standard methods. However, two closed sectors of width $\pi $ suffice in order to obtain the full information -- cp. \cite[section 5]{DHMS}. In other words, one can multiply all Stokes matrices for the smaller sectors within one of the larger sectors without losing any information. 
\end{remark}

We can now state the result in a companion representation for the ramified case. The Stokes matrices will have the shape
\[
S_\pm: \C^n=\Phi_\bO \oplus \bigoplus_{k=0}^{d-1} \Phi_k \longrightarrow \C^n=\Phi_\bO \oplus \bigoplus_{k=0}^{d-1} \Phi_k \ .
\]
For any $S:\C^n \to \C^n $, we will write $S^{j,k}:\Phi_j \to \Phi_k$ for $j,k \in \{ \bO, 0, \ldots, d-1 \} $ for the corresponding blocks. We will simply write $j <_\frb k $ for these indices and the induced ordering -- e.g. $\bO <_\frb k $ means $\bO <_\frb \sigma \zeta^k $ for $k=0, \ldots, d-1 $.  The Stokes matrices $S_\pm $ are block upper/lower triangular in the sense that
\begin{equation}\label{eq:blocktriangram}
S_+^{j,k}=0 \text{ for } j <_\frb k \text{\quad and \quad} S_-^{j,k}=0 \text{ for } j >_\frb k \ ,
\end{equation}
i.e. $(S_+,S_-) \in U_{\widetilde{\Sigma}} \times L_{\widetilde{\Sigma}} $ in the definitions of section \ref{sec:ambiguity}. We will write $[S_+,S_-] $ for the equivalence class (cp. Definition \ref{def:equivrel}). Applying \cite{DHMS}, we obtain the extension of our main result in the possibly ramified case.
\begin{theorem}\label{thm:companionram}
Let $\ualpha \defeq (\alpha_1, \ldots, \alpha_n) $ and $\ubeta \defeq (\beta_1, \ldots, \beta_m) $ satisfy Assumption \ref{ass:genericramified}. Consider the polynomials
\begin{align*}
\chi_B(X) \defeq \prod_{j=1}^{m} (X-\exp(-2\pi i \beta_j)) &= X^m+B_1 X^{m-1}+ B_2 X^{m-2} + \ldots + B_m, \\
\chi_C(X) \defeq \prod_{j=1}^n (X- \exp(-2\pi i \alpha_j)) &= X^n + C_1 X^{n-1} + C_2 X^{n-2}+ \ldots + C_n,
\end{align*}
and $\chi_E(X) \defeq (X^d-1) \cdot \chi_B(X) = X^n + E_1 X^{n-1} + E_2 X^{n-2}+ \ldots + E_n $.
With the choice of $\fra, \frb $ as above and the induced ordering $<_\frb $, the equivalence class of Stokes matrices for the hypergeometric system $[d]^\ast \Hypl{\ualpha}{\ubeta}{\glambda} $ at infinity is represented by the pair $[S_+,S_-] $ satisfying \eqref{eq:blocktriangram} and
\[
\begin{array}{lll}
 S_+^{j,j} & =1_{\dim(\Phi_j)} & \text{for all } j \in \{ \bO,0, \ldots, k \} \\
 S_+^{j,k} & = U'_j V'_k & \text{for } j>_\frb k \\
S_-^{j,j} & = 1_{\dim(\Phi_j)} - U'_jV'_j & \text{for all } j \in \{ \bO,0, \ldots, k \} \\
S_-^{j,k} &=-U'_jV'_k & \text{for } j<_\frb k \ ,
\end{array}
\]
where $\dim\Phi_\bO=d $, $\dim\Phi_k=1 $ for $k=0,\ldots, d-1 $, and the linear maps $U'_j, V'_j $ are defined in \eqref{eq:quiverram2}.
\end{theorem}

\begin{remark}
 In principle, it is possible to give a Jordan-representation of the result also. However, the combinatorics of the various eigenvalues -- in particular additional multiplicities of the eigenvalues $\mu_d $ coming from possible $\beta_j \in \frac{1}{d}\Z $ -- makes it very cumbersome to produce a general (useful) formula. In an explicitly given situation, the computations can be carried out similar to section \ref{sec:Jordanquiver} and \ref{sec:JordanStokes}
\end{remark}

Note that all entries in the representative of Theorem \ref{thm:companionram} arise from products, sums and inverses of matrices the entries of which are polynomial expressions in the coefficients of $\chi_C $ and $\chi_E $. Under the cyclotomic property the latter are integers, and under the conjugate property they are real numbers. Hence we also obtain the analogues of Corollary \ref{cor:cyclo} and \ref{cor:real}:
\begin{corollary}
If the parameters $\ualpha, \ubeta $ both satisfy 
\begin{enumerate}
\item the cyclotomic property, there is a representative with $S_\pm \in \mathrm{GL}_n(\Q) $,
\item the conjugate property, there is a representative with $S_\pm \in \mathrm{GL}_n(\R) $. 
\end{enumerate}
\end{corollary}
 
\subsection*{Acknowledgements}
In a previous version of this paper, we imposed the same extra assumption as Duval-Mitschi for the main result. Christian Sevenheck and Emanuel Scheidegger showed interest in this work and invited me to spend some more thoughts on how to avoid this condition. I am grateful for discussions with them on this subject. I thank the anonymous referees for various suggestions -- especially regarding the extension to the ramified case.


\begin{thebibliography}{00}

\bibitem{BJL79} W. Balser, W.B. Jurkat, D.A. Lutz, ''Birkhoff invariants and Stokes' multipliers for meromorphic linear differential equations'', \textit{Journal of Mathematical Analysis and Applications} 71 (1979): 48--94.

\bibitem{BH} 
F. Beukers, G. Heckman, ''Monodromy for the hypergeometric function ${}_nF_{n-1} $'', \textit{Invent. Math.} 95 no.2 (1989): 325--354.

\bibitem{blochesnault} S. Bloch, H. Esnault, ''Local Fourier transforms and rigidity for D-modules'',  \textit{Asian J. Math.} 8, no.4 (2004): 587--605.

\bibitem{albrThomasSev1} A. Casta\~no Dom\'ingez, T. Reichelt, Ch. Sevenheck, ''Examples of hypergeometric twistor D-modules'',  {\it Algebra \& Number Theory} 13 (2019), no. 6, 1415-1442.

\bibitem{albrThomasSev2} A. Casta\~no Dom\'ingez, Ch. Sevenheck, ''Irregular Hodge filtration of some confluent hypergeometric systems'', {\it Journal of the Institute of Mathematics of Jussieu}, 1-42. doi:10.1017/S1474748019000288.

\bibitem{cotti} G. Cotti; B. Dubrovin; D. Guzzetti, ''Isomonodromy deformations at an irregular singularity with coalescing eigenvalues'', \textit{Duke Math. J.} 168, no.6 (2019): 967--1108.

\bibitem{DHMS} A. D'Agnolo, M. Hien, G. Morando, C. Sabbah, ''Topological computations of some Stokes phenomena on the affine line'', 
Annales de l'Institut Fourier, Tome 70 (2020) no. 2, pp. 739--808.

\bibitem{DK16} A. D'Agnolo and M. Kashiwara, ''Riemann-Hilbert correspondence for holonomic D-modules'', \textit{Publ. Math. Inst. Hautes \'Etudes Sci.} 123, no.1 (2016): 69--197.

\bibitem{DM} A. Duval, C. Mitschi, ''Matrices de Stokes et groupe de Galois des equations hyperg\'eom\'etriques confluentes g\'en\'eralis\'ees'', \textit{Pacific Journal of Math.} 138, no.1 (1989): 25--56.

\bibitem{fedorov} R. Fedorov. ''Variations of Hodge structures for hypergeometric differential operators and parabolic Higgs bundles'', \textit{International Mathematics Research Notices}, 2018, Issue 18: 5583--5608.

\bibitem{garc} R. Garcia-Lopez, ''Microlocalization and stationary phase'', \textit{Asian J. Math.} 8, no.4 (2004): 747--768.

\bibitem{guz} D. Guzzetti, ''Stokes matrices and monodromy of the quantum cohomology of projective spaces'', \textit{Comm. Math. Phys.} 207, no.2 (1999): 341--383.

\bibitem{katz} Nicholas M. Katz, ''Exponential Sums and Differential Equations'', \textit{Annals of mathematical studies}, no 124, Princeton University Press (1990).

\bibitem{Mal91} B.~Malgrange, ''\'Equations diff\'erentielles \`a coefficients polynomiaux'', \textit{Progress in Mathematics} 96, Birkh\"auser (1991).

\bibitem{sabyu} C. Sabbah, J-D. Yu, ''Irregular Hodge numbers of confluent hypergeometric differential equations'', arxiv:1812.00755 (2018).


\bibitem{sabbahHodge} C. Sabbah, ''Irregular Hodge theory'', \textit{M\'em. Soc. Math. France (N.S.)} 156, Soci\'et\'e Math\'ematique de France, Paris, (2018), Chap.3 in collaboration with Jeng-Daw Yu


\end{thebibliography}
\end{document}